\newtheorem{theorem}{Theorem}
\newtheorem{THM}{Theorem}
\newtheorem{Lemma}[THM]{Lemma}
\newtheorem{lemma}[THM]{Lemma}
\newtheorem{definition}[THM]{Definition}
\newtheorem{proposition}[THM]{Proposition}
\newtheorem{notation}[THM]{Notation}
\newtheorem{assumption}[THM]{Assumption}
\theoremstyle{remark}
\newtheorem{remark}[theorem]{Remark}
\newcommand{\bR}{\mathbb{R}}
\newcommand{\be}{\begin{equation}}
\newcommand{\ee}{\end{equation}}
\newcommand{\bS}{\mathbb{S}}
\newcommand{\bH}{\mathbb{H}}
\newcommand{\lp}{\left(}
\newcommand{\rp}{\right)}
\newcommand{\lb}{\left[}
\newcommand{\rb}{\right]}
\newcommand{\lc}{\left\{}
\newcommand{\rc}{\right\}}
\newcommand{\lab}{\left|}
\newcommand{\rab}{\right|}
\newcommand{\Lap}{\Delta}
\DeclareMathOperator{\dist}{dist}
\newcommand{\sH}{\mathcal{H}}
\newcommand{\E}{\mathbb{E}}
\newcommand{\Prob}{\mathbb{P}}
\newcommand{\Ind}{\boldsymbol{1}}
\newcommand{\eps}{\varepsilon}
\DeclareMathOperator{\spn}{span}
\DeclareMathOperator{\sech}{sech}
\DeclareMathOperator{\Arctanh}{Arctanh}
\newcommand{\TVd}{\mathrm{dist}_{\mathrm{TV}}}
\newcommand{\Law}{\mathcal{L}}
\DeclareMathOperator{\sR}{sR}
\DeclareMathOperator{\Rot}{Rot}
\begin{document}

\title[Non-Markov reflection couplings]{Non-Markovian maximal couplings and a vertical reflection principle on a class of sub-Riemannian manifolds}
\author{Liangbing Luo \and Robert W.\ Neel}
\address{Department of Mathematics, Lehigh University, Bethlehem, Pennsylvania, USA; current address: Department of Mathematics and Statistics, Queen's University, Kingston, Ontario, Canada}
\email{liangbing.luo@queensu.ca}
\address{Department of Mathematics, Lehigh University, Bethlehem, Pennsylvania, USA}
\email{robert.neel@lehigh.edu}
\subjclass[2010]{Primary 58J65; Secondary 53C17, 58J35, 60J45}
\keywords{maximal coupling, sub-Riemannian manifold, SU(2), SL(2,R), non-isotropic Heisenberg group, total variation distance, heat semigroup estimates}

\begin{abstract} We develop an approach to constructing non-Markovian, non-co-adapted couplings for sub-Riemannian Brownian motions in sub-Riemannian manifolds with large symmetry groups by treating the specific cases of the three-dimensional Heisenberg group, higher-dimensional non-isotropic Heisenberg groups, $\operatorname{SL}(2,\bR)$ and its universal cover, and $\operatorname{SU}(2)$. Our primary focus is on the situation when the processes start from two points on the same vertical fiber, since in general Markovian or co-adapted couplings cannot give the sharp rate for the coupling time in this case. Non-Markovian couplings of this type on sub-Riemannian manifolds were first introduced by Banerjee-Gordina-Mariano, for the three-dimensional Heisenberg group, and were more recently extended by B\'en\'efice to $\operatorname{SL}(2,\bR)$ and $\operatorname{SU}(2)$, using a detailed consideration of the Brownian bridge. In contrast, our couplings are based on global isometries of the space, giving couplings that are maximal, as well as making the construction relatively simple and uniform across different manifolds. The coupled processes satisfy a reflection principle with respect to their coupling time, so that the coupling time reduces to the hitting time for one component of the Brownian motion, which is useful in explicitly bounding the tail probability of the coupling time. Further, it's natural to use this coupling as the second stage of a two-stage coupling when considering points on different vertical fibers. We estimate the coupling time in these various situations and give applications to inequalities for the heat semigroup.
\end{abstract}

\maketitle

\section{Introduction}

\subsection{Background and motivation}\label{Sect:Intro1}
Couplings of Brownian motions on sub-Riemannian manifolds have attracted interest in recent years (here we consider sub-Riemannian manifolds with a canonical sub-Laplacian, and we call the associated diffusion Brownian motion, following the convention in, for example, \cite{MagalieAdapted}). More concretely, we are interested in couplings in which the particles meet (in finite time), after which, because they are strong Markov processes, they can be taken to coincide (couplings in which the particles do not meet but instead stay within a controlled distance of each other almost surely, generally called parallel or synchronous couplings in the literature, are also of interest, but we don't pursue them here). Let the coupling time of two Brownian motions $B_t$ and $\tilde{B}_t$ be denoted $\tau$. If $\tau$ is a.s.\ finite, the coupling is said to be successful. The existence of a successful coupling of Brownian motions from any points on a sub-Riemannian (or Riemannian) manifold already has analytic consequences; it implies the weak Liouville property, which says that any bounded harmonic function (harmonic with respect to the same sub-Laplacian associated to the diffusion) must be constant.

Going beyond such a qualitative property, a central motivation for study such couplings is the Aldous inequality relating the coupling time to the total variation distance between the laws of the processes. That is, if $\Law(B_t)$ denotes the distribution (or law) of $B_t$   and $\TVd$ is the total variation distance, we have
\begin{equation}\label{Eqn:Aldous}
\Prob\lp \tau > t\rp \geq  \TVd\lp \Law\lp B_t\rp,\Law( \tilde{B}_t )  \rp.
\end{equation}
From here, there are also connections to various functional inequalities, such as gradient estimates for the heat semigroup, spectral gap estimates, and Poincar\'e and Sobolev inequalities. A coupling is called maximal if it makes the Aldous inequality an equality for all $t>0$. More generally, a coupling is called efficient if the ratio $\frac{\Prob\lp \tau > t\rp}{ \TVd\lp \Law\lp B_t\rp,\Law( \tilde{B}_t)  \rp}$ stays bounded as $t\rightarrow\infty$, which essentially means it achieves the optimal rate of decay for the coupling time (see \cite{BK-Efficiency} for a more precise definition and further details).

Co-adapted (or even Markov) couplings are generally easier to construct than non-co-adapted couplings, and have been developed for a variety of sub-Riemannian structures over $\bR^2$; see \cite{KB-Immersion-2018} and the references therein. Moreover, a version of these ideas has recently been developed for $\operatorname{SU}(2)$ and $\operatorname{SL}(2,\bR)$, which we henceforth abbreviate to $\operatorname{SL}(2)$ (see below for the definition of these spaces and their sub-Riemannian structures), by B\'en\'efice \cite{MagalieAdapted}. However, for sub-Riemannian manifolds, one cannot expect Markovian or co-adapted couplings to be efficient. Indeed, \cite{BGM} (see their Remark 3.2) shows that for the (3-dimensional) Heisenberg group $\bH$, which is generally considered the most elementary example of a sub-Riemannian manifold, the best Markovian coupling for Brownian motions started from points on the same vertical fiber (with respect to the natural submersion structure) has coupling rate $1/\sqrt{t}$ while the sharp rate is $1/t$. Motivated by this, they construct a non-Markovian coupling of two Heisenberg Brownian motions starting on the same vertical fiber that achieves this sharp rate, and is therefore efficient in the sense described above. The idea is as follows. The Heisenberg group can be understood as the flat plane $\bR^2$ along with the associated (signed) area. Indeed, if we let $(x,y,z)$ be the usual (Cartesian) coordinates on $\bH$ (we give details below), then letting $(x_t,y_t)$ be an $\bR^2$-Brownian motion started from $(x_0,y_0)$ and
\[
z_t = z_0 +\frac{1}{2}\int_0^t x_s \,dy_s- \frac{1}{2}\int_0^t y_s\, dx_s
\]
be the associated L\'evy area, started from $z_0$, gives Heisenberg Brownian motion. Projecting onto $(x,y)$ gives a submersion onto $\bR^2$, and the vertical fiber over a point $(x_0,y_0)$ is all points of the form $(x_0,y_0,z)$ for $z\in \bR$. By left-invariance, two points on the same fiber can be taken to be $(0,0,0)$ and $(0,0,2a)$ for some $a>0$. The idea of \cite{BGM} is to allow the $\bR^2$-Brownian motions to separate and then come back together at a future time in such a way that the difference in their L\'evy areas changes, increasing or decreasing by some controlled amount. Such a construction is necessarily non-Markov, and their approach is based on detailed consideration of planar Brownian paths, making significant use of the Karhunen-Lo\`eve expansion and properties of the Brownian bridge. They then iterate this procedure and show that a.s.\ the $z$-coordinates will coincide after one of these steps, at which point the processes have coupled successfully. More generally, if the Brownian motions start from $(x_0,y_0,z_0)$ and $(\tilde{x}_0,\tilde{y}_0,\tilde{z}_0)$ with $(x_0,y_0)\neq (\tilde{x}_0,\tilde{y}_0)$, they consider a two-stage coupling. First, the $\bR^2$-Brownian motions are run under a standard mirror coupling, which will be successful. Once the $(x,y)$ processes are coupled (at some random time and random associated $z$-values), the non-Markovian coupling just described is used, which then results in the full processes being coupled.

Recently, B\'en\'efice \cite{Benefice2023b} (see also the announcement in \cite{MagalieEtAl}) developed a version of this non-Markovian vertical coupling for $\operatorname{SU}(2)$ and $\operatorname{SL}(2)$, which admit a similar submersion structure, although with a compact vertical fiber (diffeomorphic to $\bS^1$). This construction again makes use of a detailed analysis of the corresponding bridge processes, and also makes fundamental use of the compactness of the vertical fibers. Indeed, Remark 1 of \cite{Benefice2023b} explicitly notes that applying this version of the vertical coupling to the Heisenberg group gives a different coupling than that of \cite{BGM}, and it is unclear that it is successful.

Our primary goal in the present paper is to develop an alternative construction to the vertical couplings just described which is more direct (it is given by a single global isometry), which generalizes naturally to a variety of sub-Riemannian manifolds sharing a similar submersion structure, and moreover, which is maximal, rather than merely efficient.  We can quickly describe it, in the case of the Heisenberg group, for simplicity, as follows. Let $(x_t,y_t,z_t)$ be a Heisenberg Brownian motion started from the origin $(0,0,0)$. For some $a\geq 0$, let $\sigma_a$ be the first time $z_t$ hits the level $a$.  For any $(x,y)\neq (0,0)$, let $R_{(x,y)}:\bR^2\rightarrow\bR^2$ be the map that reflects $\bR^2$ over the line containing $(0,0)$ and $(x,y)$. Recalling that $\bR^2$-Brownian motion is preserved under any isometry of $\bR^2$ (up to the initial point possibly moving), we see that
\begin{equation}\label{Eqn:HFlip}
\lp \tilde{x}_t, \tilde{y}_t\rp = \begin{cases}
R_{\lp x_{\sigma_a},y_{\sigma_a}\rp}\lp x_t,y_t\rp & \text{for $t\leq \sigma_a$} \\
\lp x_t,y_t\rp  & \text{for $t> \sigma_a$}
\end{cases}
\end{equation}
is also an $\bR^2$-Brownian motion from the origin. (Observe though that the construction is not Markov.) Further, if $\tilde{z}_t$ is the L\'evy area associated to $\lp \tilde{x}_t, \tilde{y}_t\rp$ and started from $\tilde{z}_0=2a$, we see, using that reflection in $\bR^2$ reverses the sign of the area enclosed by a curve, that
\begin{equation}\label{Eqn:VFlip}
\tilde{z}_t = \begin{cases}
2a-z_t & \text{for $t\leq \sigma_a$} \\
z_t  & \text{for $t> \sigma_a$}
\end{cases} .
\end{equation}
Thus we have two Heisenberg Brownian motions, started from $(0,0,0)$ and $(0,0,2a)$, that couple at time $\sigma_a$. Not only does this reduce the study of the coupling time to the study of a hitting time for one process, but this hitting time satisfies a reflection principle exactly analogous to the classical reflection principle for Brownian motion on the real line, in spite of the fact that the $z_t$ is not a Markov process. Using this, one easily shows that the coupling is maximal. We fill in some details below, but this idea forms the core of the present paper. (As already suggested above, there are several related notions used in the literature regarding the adaptedness of the coupling. The notion of a Markovian-coupling used in \cite{Hsu-Sturm}, closely related to a co-adapted or immersion coupling, does not necessarily imply that the joint process $\lp B_t, \tilde{B}_t\rp$ is Markov. Also, the couplings in \cite{Kendall-H} are taken to be co-adapted, rather than Markov, though primarily to simplify the construction. We haven't emphasized these distinctions because our vertical reflection couplings are not Markovian or co-adapted in any of these senses, so we simply refer to them as ``non-Markovian'' without further elaboration.)

Several other standard sub-Riemannian manifolds admit a similar submersion structure, and this construction generalizes directly to them (although with a slight modification for the circular fiber in the cases of $\operatorname{SU}(2)$ and $\operatorname{SL}(2)$).

Our explicit construction of maximal couplings using global reflections is similar to earlier work of Kuwada. In \cite{Kuwada-Suf} and \cite{Kuwada-Nec}, Kuwada showed that a \emph{reflection structure} (see \cite[(A1) and (A2)]{Kuwada-Suf} or \cite[Definition 1.2]{Kuwada-Nec}) was a sufficient condition for the existence and uniqueness of a maximal Markovian coupling on a Riemannian manifold and a necessary condition for the existence on a Riemannian homogeneous space. Such a maximal Markovian coupling was proved to be a \emph{mirror coupling} in the sense of Kuwada (see \cite[p. 635]{Kuwada-Nec} for precise definition). This mirror coupling is not known to be the same as the reflection coupling in the sense of Kendall-Cranston in general, but with the existence of a reflection structure on the Riemannian manifold, they coincide; see \cite[Theorem 5.1]{Kuwada-Suf}. (See also \cite{KB-Markovian-2017} for the possibility of adding a drift to the diffusion.) Our construction is obviously conceptually similar, in that we rely on the fact that our sub-Riemannian model spaces have the largest possible isometry group given their growth vector, which is related to the fact that they can be realized as bundles over Riemannian homogeneous spaces. The difference is illustrated by comparing the above description of our vertical coupling on the Heisenberg group with the reflection coupling on $\bR^2$, with the standard Euclidean metric, of course. In that case, if the initial distance between the points is $2a$, for some $a>0$, without loss of generality, we can take the points as $(0,0)$ and $(2a,0)$. If $B_t$ is a Brownian motion started from $(0,0)$, then we reflect it over the line $\{x=a\}$ at the first time $B_t$ hits this line. This gives a Brownian motion $\tilde{B}_t$ starting from $(2a,0)$ that couples with $B_t$ at the first hitting time of $\{x=a\}$. Moreover, this coupling is maximal, and it is also Markov (perhaps in spite of this description). The point is that here the reflection is non-random and known from the start, so one knows the evolution of $\tilde{B}_t$ in an adapted way, even before hitting the line. Indeed, this reflection coupling agrees with the Kendall-Cranston reflection coupling, given via an SDE on $\bR^2\times\bR^2$. In contrast, the vertical reflection coupling based on \eqref{Eqn:HFlip}, which we study in this paper, is given by a reflection over a random line in $\bR^2$, which is not known until $\sigma_a$, and this is responsible for the non-Markovian nature of the coupling. Note that it is known that maximal couplings exist in considerable generality (apparently going back to \cite{SAndS-Maximal} in the case of continuous-time processes), and even in the case of Brownian motions on $\bR$, need not be unique (see the simple example given in \cite{Hsu-Sturm} and attributed there to Fitzsimmons). In the work of Kuwada just discussed, a unique maximal coupling was selected by requiring it be Markov. While our vertical couplings are not characterized in this way, we nonetheless emphasize that our vertical reflection coupling shares the same structural properties as the more usual (and Markov) reflection coupling on Riemannian manifolds with global symmetries, namely, it is explicitly constructed via a global isometry, it is maximal, and it reduces the determination of the coupling time to the determination of a hitting time for a single process.

We note that the couplings described above, both the co-adapted and non-Markovian ones, rely on the Brownian motion in question consisting of a well-understood Riemannian Brownian motion augmented with an additional functional (or several). Couplings that more directly generalize the Kendall-Cranston mirror coupling on a Riemannian manifold (see Sections 6.5-6.7 of \cite{EltonBook}), and are compatible with comparison geometry and curvature bounds, have yet to be developed for sub-Riemannian manifolds, despite some groundwork having been laid in \cite{OurRadial} and \cite{Preprint-FEA}. Of course, gradient estimates for heat semigroups have also been studied by other methods, for example, generalized $\Gamma$-calculus, see \cite{Bonnefont-SL,FabriceMichel,BG-Gamma}, or Malliavin calculus, see \cite{AntonMarc}. Moreover, a comparison between $\Gamma$-calculus and parallel couplings for Kolmorogov-type diffusions (which are close cousins to sub-Riemannian diffusions) has been undertaken in \cite{Kolmogorov-Coupling}.

After this paper was submitted, an extension of the approach of \cite{Benefice2023b} to free, step 2 Carnot groups was given by B\'en\'efice \cite{Benefice-New}. In this case, the vertical fiber is multi-dimensional. At the same time, ``drift couplings'' for free, step 2 Carnot groups, in which a Brownian motion is coupled with a Brownian motion with a prescribed drift, with the drift chosen to force the particles to couple by a fixed (finite) time $T$, were introduced  in \cite{Benefice-Drift}. The idea here is that the laws of two Brownian motions at time $T$ can be compared by using the Girsanov theorem to see how the addition of the drift alters the law of the second process. An artful choice of the drift then allows one to study various functional inequalities.

\subsection{Summary of results}\label{Sect:ResSum}

The key to the vertical coupling as described in Equations \eqref{Eqn:HFlip} and \eqref{Eqn:VFlip} is the submersion onto a Riemannian manifold and the large symmetry group of that manifold. This structure is shared by several standard sub-Riemannian manifolds. Here, we explicitly consider the 3-dimensional Heisenberg group, $\operatorname{SL}(2)$ and its universal cover $\widetilde{\operatorname{SL}(2)}$, $\operatorname{SU}(2)$, and non-isotropic Heisenberg groups of any dimension. All of these spaces admit a submersion onto a Riemannian model space with a one-dimensional vertical fiber. We give precise definitions of these spaces (along with their sub-Riemannian structures) and the specific form the vertical coupling takes below. For now, we give a general statement that incorporates all of these cases.

\begin{THM}\label{THM:SummaryCoupling}
Let $M$ be the Heisenberg group (of any dimension, and possibly non-isotropic), $\operatorname{SL}(2)$ or its universal cover, or $\operatorname{SU}(2)$, and let $q$ and $\tilde{q}$ be two points on the same vertical fiber with vertical displacement $2a$ for some $a>0$. Then there exists a hypersurface $S$ in $M$ (not necessarily connected) such that, if $B_t$ is an $M$-Brownian motion from $q$ and $\sigma_a$ is the first hitting time of $S$ for $B_t$, then there exists an isometry of $M$, $I_{B_{\sigma_a}}$, depending on $B_{\sigma_a}$, such that
\[
\tilde{B}_t = \begin{cases}
I_{B_{\sigma_a}}(B_t) & \text{for $t\leq \sigma_a$} \\
B_t & \text{for $t> \sigma_a$}
\end{cases}
\]
is an $M$-Brownian motion from $\tilde{q}$ that couples with $B_t$ at time $\sigma_a$. Moreover, this coupling is successful and maximal, the distribution of $\sigma_a$ depends only on $a$ (for a given choice of $M$), and this coupling satisfies a reflection principle. Namely, there exists an open subset $S^+$ of $M$ (which has $S$ as its boundary) such that
\[
\Prob\lp \sigma_a> t\rp = 1-2 \Prob\lp B_t\in S^+ \rp .
\]
\end{THM}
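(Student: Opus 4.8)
My plan is to treat all the listed manifolds simultaneously through the submersion $\pi\colon M\to N$ onto the Riemannian model space, using three structural facts: (i) an $M$-Brownian motion projects to an $N$-Brownian motion, and its vertical component is recovered from the projected trajectory and one initial value by integrating a connection $1$-form of the submersion whose curvature is a fixed non-degenerate $2$-form on $N$ (a multiple of the area form when $N=\bR^{2},\bH^{2},\bS^{2}$); (ii) the vertical translations are isometries of $M$, so we may coordinatize the common fiber over $p:=\pi(q)=\pi(\tilde q)$ so that $q$ sits at vertical level $0$ and $\tilde q$ at vertical level $2a$; (iii) for each $v\in N$ there is an involutive isometry $r_{p,v}$ of $N$ fixing both $p$ and $v$, reversing the curvature $2$-form, and lifting to an isometry of $M$ --- the geodesic reflection through $p$ and $v$ when $N=\bR^{2},\bH^{2},\bS^{2}$, and the product of the reflections of the symplectic $2$-planes across the lines through the origin and the components of $v$ in the non-isotropic Heisenberg case $N=\bR^{2n}$. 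I would then take $S$ to be the union of the fibers at vertical level $a$ --- in the cases with circular fiber ($\operatorname{SU}(2)$, $\operatorname{SL}(2)$) one must also adjoin the fiber level antipodal to $a$, which is why $S$ may be disconnected --- and let $S^{+}$, $S^{-}$ be the two components of $M\setminus S$, containing $\tilde q$ and $q$ respectively. For $w\in S$ let $I_{w}$ be the isometry of $M$ obtained as the composition of the vertical translation by $2a$ with the lift of $r_{p,\pi(w)}$ that reverses the fibers; then $I_{w}$ is an involution that fixes $w$, fixes $S$ setwise, interchanges $S^{+}$ and $S^{-}$, and sends $q$ to $\tilde q$, this last point using crucially that the level of $S$ is exactly half the vertical displacement. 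Define $\tilde B$ by the formula in the statement with this $I_{B_{\sigma_{a}}}$.

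Next I would check that $\tilde B$ is an $M$-Brownian motion from $\tilde q$ that coincides with $B$ after $\sigma_{a}$. For the projection: $\pi(\tilde B_{t})$ equals $\pi(B_{t})$ for $t>\sigma_{a}$ and $r_{p,\pi(B_{\sigma_{a}})}(\pi(B_{t}))$ for $t\le\sigma_{a}$; since this reflection fixes both $\pi(B_{0})=p$ and $\pi(B_{\sigma_{a}})$, the strong Markov property at $\sigma_{a}$ together with the invariance of the $N$-Brownian bridge under isometries fixing both its endpoints shows that $\pi(\tilde B)$ is again an $N$-Brownian motion from $p$. Because $r_{p,\pi(B_{\sigma_{a}})}$ reverses the curvature $2$-form, it reverses the increments of the vertical component on $[0,\sigma_{a}]$; starting the vertical component of $\tilde B$ at level $2a$ and using that $B_{\sigma_{a}}$ sits at vertical level $a$, the same short computation as in \eqref{Eqn:VFlip} shows that the vertical component of $\tilde B$ is $2a$ minus that of $B$ on $[0,\sigma_{a}]$ and equals that of $B$ on $(\sigma_{a},\infty)$. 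Hence $\tilde B$ agrees with $B$ from $\sigma_{a}$ on, and $\tilde B$ is an $M$-Brownian motion from $\tilde q$. The construction is not adapted, since $I_{B_{\sigma_{a}}}$ involves the future position $B_{\sigma_{a}}$; this is exactly why the bridge-invariance input is needed in place of a naive adaptedness argument.

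For the reflection principle I would use the auxiliary process that instead reflects the future: $\hat B_{t}=B_{t}$ for $t\le\sigma_{a}$ and $\hat B_{t}=I_{B_{\sigma_{a}}}(B_{t})$ for $t>\sigma_{a}$. Here $I_{B_{\sigma_{a}}}$ is measurable with respect to the history up to $\sigma_{a}$ and fixes $B_{\sigma_{a}}$, so the strong Markov property and isometry invariance of Brownian motion give $\hat B\stackrel{d}{=}B$, and $\hat B$ first hits $S$ at the same time $\sigma_{a}$. On $\{\sigma_{a}\le t\}$ one has $\hat B_{t}=I_{B_{\sigma_{a}}}(B_{t})$ with $I_{B_{\sigma_{a}}}$ interchanging $S^{+}$ and $S^{-}$, so $\Prob(B_{t}\in S^{+},\,\sigma_{a}\le t)=\Prob(\hat B_{t}\in S^{+},\,\sigma_{a}\le t)=\Prob(B_{t}\in S^{-},\,\sigma_{a}\le t)$; adding these, using $\Prob(B_{t}\in S)=0$ (as $B_{t}$ has a density) and $\{B_{t}\in S^{+}\}\subseteq\{\sigma_{a}\le t\}$, gives $\Prob(\sigma_{a}\le t)=2\,\Prob(B_{t}\in S^{+})$, the displayed identity. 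In particular $\Prob(B_{t}\in S^{+})\le\tfrac12$, so successfulness is equivalent to $\Prob(B_{t}\in S^{+})\to\tfrac12$ as $t\to\infty$, which one checks in each model (in the non-compact cases from the symmetry of the vertical component about the level of $q$ together with its spreading out, in the compact cases from equidistribution of the fiber component and $\vol(S^{+})=\vol(S^{-})$). Maximality follows because on $\{\sigma_{a}>t\}$ we have $B_{t}\in S^{-}$ while $\tilde B_{t}=I_{B_{\sigma_{a}}}(B_{t})\in S^{+}$, so $\Law(B_{t}\mid\sigma_{a}>t)$ and $\Law(\tilde B_{t}\mid\sigma_{a}>t)$ are carried by the disjoint open sets $S^{-}$ and $S^{+}$ and hence are mutually singular --- the standard criterion for equality in \eqref{Eqn:Aldous} at time $t$. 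Finally, the law of $\sigma_{a}$ depends only on $a$ and $M$ because, by (ii) and (iii), any two pairs of points at vertical displacement $2a$ are interchanged by an isometry of $M$ carrying the corresponding hypersurfaces $S$ and half-spaces $S^{+}$ onto one another.

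The step I expect to be the main obstacle is the verification that $\tilde B$ is genuinely an $M$-Brownian motion from $\tilde q$: making the bridge-invariance argument rigorous despite the dependence of the reflecting isometry on $B_{\sigma_{a}}$, and carrying it out uniformly across the five families, including the circular-fiber modification where $S$ has two components and the reflections must be chosen coherently. One must confirm in each case that the maps $r_{p,v}$ are global isometries of $N$ lifting to $M$, that they reverse the relevant curvature $2$-form, and that the joint law of the first-passage pair $(\sigma_{a},B_{\sigma_{a}})$ is compatible with the symmetry being used --- the compatibility coming ultimately from the isotropy of $N$ at $p$, whose action preserves the curvature $2$-form. Granting this, the reflection identity, successfulness, maximality, and the dependence of $\sigma_{a}$ only on $a$ all follow as above.
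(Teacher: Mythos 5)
Your overall architecture matches the paper's: $S$ is the equidistant hypersurface (the level $\{z=a\}$, together with the antipodal level in the circular-fiber cases), $I_w$ is a global isometry of $M$ built from a reflection of the base and a vertical flip, and the reflection principle, successfulness, and maximality all come from the geometry of $S^{\pm}$. Your reflection-principle argument via the auxiliary process $\hat B$ that reflects the \emph{future}, and your maximality argument via mutual singularity of $\Law(B_t\mid\sigma_a>t)$ and $\Law(\tilde B_t\mid\sigma_a>t)$ on $S^-$ and $S^+$, are both correct and essentially the same as the computations the paper carries out around \eqref{Eqn:Max} and \eqref{Eqn:ZSymmetry}.

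The gap is exactly where you flag it: showing that $\tilde B$ is an $M$-Brownian motion. The ``Brownian bridge invariance'' argument as you state it does not go through directly, because you are not conditioning on fixed endpoints at a fixed time, but on the pair $(\sigma_a, B_{\sigma_a})$, where $\sigma_a$ is a path-dependent first hitting time. The event $\{\sigma_a=s,\, B_{\sigma_a}=w\}$ is a nontrivial constraint on the whole path (the vertical component first reaches level $a$ exactly at time $s$), and the reflection $r_{p,\pi(w)}$ \emph{reverses} the curvature $2$-form and hence sends this event to the corresponding event for level $-a$; so the conditional law is not literally fixed by the reflection, and one must invoke a further symmetry (the $a\mapsto -a$ symmetry of the vertical process started from $0$) to close the argument. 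You gesture at this when you mention compatibility with the law of $(\sigma_a,B_{\sigma_a})$ coming from isotropy at $p$, but the step is not actually carried out, and it is the crux.

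The paper's Lemma~\ref{Lem:BM} avoids all of this by observing that the reflection depends only on the angle $\theta(B_{\sigma_a})$, not on $(\sigma_a,B_{\sigma_a})$ itself. Conditioning only on $\theta(B_{\sigma_a})$ (which is uniform by rotational symmetry), writing $R_{(1,\theta)}$ as a rotation composed with the fixed reflection $R_{(1,0)}$, noting that $R_{(1,0)}B$ is again a Brownian motion with $\sigma_a^{B}=\sigma_{-a}^{R_{(1,0)}B}$ (which encodes the $a\mapsto-a$ symmetry you need), and unwinding the conditioning by a second application of rotational invariance, gives a complete finite-dimensional-distribution proof with no appeal to bridge theory. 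That conditioning on a single rotationally uniform angle, rather than on $(\sigma_a,B_{\sigma_a})$, is the technical idea that makes the argument go through cleanly; it is what your proposal is missing, and I would encourage you to replace the bridge heuristic with it.
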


It will be clear that the natural variant of this coupling could be developed on some other spaces; for example, one could take the quotient of $\bH$ under translation by some $\gamma>0$ in the vertical fiber. However, the natural extent of this approach is unclear, in part because sub-Riemannian structures exhibit considerable variety. In all examples considered here, the horizontal distribution has co-dimension 1, and it's not obvious whether or not this approach to constructing couplings can apply to cases with a more complicated growth vector (such as higher-step structure). Thus, a more comprehensive result (similar to those of Kuwada mentioned above) describing which sub-Riemannian manifolds admit explicit (if non-Markovian) maximal couplings, given by global isometries, would be desirable. Going further in this vein, it is natural to ask whether or not one can construct a maximal coupling for points not on the same vertical fiber, even for the Heisenberg group.

Once we have a vertical coupling of this type, we obviously wish to understand $\sigma_a$ well enough to compute or bound the coupling time. Further, one can then use this vertical coupling as the second stage in a two-stage coupling, as described above. In the case of the three-dimensional Heisenberg group, this follows the lines of \cite{BGM} and gives essentially the same results, just with a simplified proof and better control of the constants, coming from the fact that we use a simpler and maximal vertical coupling. Similarly, in the cases of $\operatorname{SL}(2)$ and $\operatorname{SU}(2)$, the application of the coupling follows the lines of \cite{Benefice2023b}, again with simpler proofs. The cases of $\widetilde{\operatorname{SL}(2)}$ and higher-dimensional, non-isotropic Heisenberg groups, on the other hand, have not been previously treated via couplings.

Concretely, our ability to understand $\sigma_a$ is strongly influenced by the structure of the vertical fiber. In the cases of the Heisenberg groups and $\widetilde{\operatorname{SL}(2)}$, the fiber is a line, which makes the distribution of $\sigma_a$ amenable to study via the reflection principle. In the cases of $\operatorname{SL}(2)$ and $\operatorname{SU}(2)$, the vertical fiber is a circle, which makes determining the distribution of $\sigma_a$ more difficult (see Remark \ref{Rmk:Circle}). For now, we settle for a general exponential decay in these two cases, based on the compactness of the fiber, just as in \cite{Benefice2023b}.

\begin{THM}\label{THM:SummaryVert}
Let everything be as in Theorem \ref{THM:SummaryCoupling}, and let $\Law\lp B_t\rp$ and $\Law\lp \tilde{B}_t\rp$ be the laws of Brownian motions from $q$ and $\tilde{q}$, respectively. Further, let $P_t$ be the heat semigroup on $M$ and $Z$ be the natural vertical vector field tangent to the vertical fiber (see below for the precise description of $Z$ for each space $M$). Then depending on $M$, we have the following
\begin{itemize}
\item If $M$ is one of the Heisenberg groups, then the coupling time satisfies the estimate
\[
\Prob\lp \sigma_a> t\rp =\TVd \lp \Law\lp B_t\rp, \Law\lp \tilde{B}_t\rp \rp \leq \frac{2a}{\alpha_n t}
\]
where $\alpha_n>0$ is an invariant of the structure (see Definition \ref{defn.NonisotropicHeisenbergGroup}), which is equal to 1 for the 3-dimensional Heisenberg group. This implies the vertical gradient estimate
\[
\lab Z P_t f  \rab = \lab \nabla_{V} P_t f \rab \leq \frac{1}{\alpha_n t}\|f\|_{\infty} \quad\text{for any $f\in L^{\infty}$.}
\]
\item If $M=\widetilde{\operatorname{SL}(2)}$, then, for some positive constants $c$ and $T_0$, we have
\[
\Prob\lp \sigma_a> t\rp  =\TVd \lp \Law\lp B_t\rp, \Law\lp \tilde{B}_t\rp \rp \leq c\frac{2a}{\sqrt{t}} \quad \text{for all $t>T_0$,}
\]
which implies the vertical gradient estimate
\[
\lab Z P_t f \rab = \lab \nabla_{V} P_t f \rab \leq \frac{c}{\sqrt{t}}\|f\|_{\infty} \quad \text{for all $t>T_0$.}
\]
\item If $M$ is $\operatorname{SL}(2)$ or $\operatorname{SU}(2)$, then, for some positive constants $C$, $c$, and $T_0$, which do not depend on $a$, such that
\[
\Prob\lp \sigma_a> t\rp =\TVd \lp \Law\lp B_t\rp, \Law\lp \tilde{B}_t\rp \rp \leq Ce^{-ct} \quad \text{for all $t>T_0$.}
\]
\end{itemize}
\end{THM}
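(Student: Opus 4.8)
\medskip
\noindent\textbf{Overall plan.} The identity $\Prob(\sigma_a>t)=\TVd(\Law(B_t),\Law(\tilde B_t))$ will be immediate in every case: Theorem~\ref{THM:SummaryCoupling} provides a \emph{maximal} coupling whose coupling time is $\sigma_a$, so the Aldous inequality \eqref{Eqn:Aldous} is an equality. Hence the whole problem reduces to bounding $\Prob(\sigma_a>t)$, and the idea throughout is to feed the reflection principle of Theorem~\ref{THM:SummaryCoupling} into an analysis of the vertical component $z_t$ of $B_t$ --- the coordinate along the fiber through $q$, started from $0$ --- which in each of our spaces is a continuous local martingale. The three regimes in the statement correspond to the three possibilities for the fiber: a line carrying the parabolic scaling of a L\'evy area (the Heisenberg groups), a line without that scaling ($\widetilde{\operatorname{SL}(2)}$), and a compact circle ($\operatorname{SL}(2)$ and $\operatorname{SU}(2)$).

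\medskip
\noindent\textbf{The Heisenberg groups.} First I would note that here $z_t$ is a (weighted sum of) L\'evy area(s) of the planar Brownian motion underlying $B_t$, hence a continuous martingale whose one-dimensional law is symmetric and unimodal. Applying the reflection principle of Theorem~\ref{THM:SummaryCoupling} with $S^{+}=\{z>a\}$ and using symmetry about $0$,
\[
\Prob(\sigma_a>t)=1-2\Prob(z_t>a)=\Prob(-a\le z_t\le a)\le 2a\,p_{z_t}(0),
\]
so it remains to compute $p_{z_t}(0)$. By Brownian scaling $p_{z_t}(0)=\tfrac1t p_{z_1}(0)$, and $p_{z_1}(0)$ I would obtain by Fourier inversion of the characteristic function of $z_1$, which is a product of hyperbolic secants (in the $3$-dimensional isotropic normalization $\E[e^{i\mu z_1}]=\sech(\mu/2)$, whence $p_{z_1}(0)=\tfrac1{2\pi}\int_{\bR}\sech(\mu/2)\,d\mu=1$); the invariant $\alpha_n$ of Definition~\ref{defn.NonisotropicHeisenbergGroup} is precisely the reciprocal of the constant so produced, giving $\Prob(\sigma_a>t)\le\tfrac{2a}{\alpha_n t}$. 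For the gradient estimate I would use that, with $q,\tilde q$ at vertical displacement $2a$, the coupling gives $|P_tf(q)-P_tf(\tilde q)|\le 2\|f\|_\infty\Prob(\sigma_a>t)$; dividing by $2a$ and letting $a\downarrow0$ then bounds $|ZP_tf|$ and yields the stated vertical gradient inequality.

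\medskip
\noindent\textbf{The group $\widetilde{\operatorname{SL}(2)}$.} Now $z_t$ is still a continuous local martingale, but it no longer has the parabolic scaling of a L\'evy area: its quadratic variation is $\QV{z}_t=\int_0^t\phi(r_s)^2\,ds$, where $r_s$ is the hyperbolic radial part of the base $\bH^2$-Brownian motion and $\phi$ is bounded with $\phi(r)\to1$ as $r\to\infty$. Since the radial drift is bounded below by a positive constant, one has a pathwise comparison $r_s\ge b_s+cs$ for a one-dimensional Brownian motion $b$, so that $\Prob(\QV{z}_t<\epsilon t)\le Ce^{-\kappa t}$ for a suitable $\epsilon>0$ and all large $t$. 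Writing $z_t=W_{\QV{z}_t}$ via Dambis--Dubins--Schwarz and using $\{\sigma_a>t\}=\{\sup_{s\le t}z_s<a\}=\{\sup_{u\le\QV{z}_t}W_u<a\}$, the classical reflection principle for the Brownian motion $W$ gives
\[
\Prob(\sigma_a>t)\le\Prob\!\Big(\sup_{u\le\epsilon t}W_u<a\Big)+\Prob\big(\QV{z}_t<\epsilon t\big)\le\frac{2a}{\sqrt{2\pi\epsilon t}}+Ce^{-\kappa t},
\]
which is at most $c'\tfrac{2a}{\sqrt t}$ once $t$ exceeds a threshold $T_0$ (chosen, if necessary in terms of $a$, so as to absorb the exponential term); the gradient estimate follows exactly as in the Heisenberg case.

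\medskip
\noindent\textbf{The groups $\operatorname{SL}(2)$ and $\operatorname{SU}(2)$, and the main obstacle.} Here the fiber is compact, $S$ is a finite union of fibers, and $\sigma_a$ is the hitting time of $S$ for the \emph{genuinely Markov} process $B_t$. The plan is to establish a uniform hitting estimate --- constants $T_0,\delta>0$, independent of $a$ and of the starting point, with $\Prob^{q}(\sigma_a\le T_0)\ge\delta$ for every $q$ --- after which iterating the Markov property of $B_t$ gives $\Prob^{q}(\sigma_a>nT_0)\le(1-\delta)^n$, hence $\Prob(\sigma_a>t)\le Ce^{-ct}$ for $t>T_0$. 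The uniform estimate I would deduce from the fact that the vertical component, lifted to $\bR$, is a continuous local martingale whose quadratic-variation rate is a function of the base position that is not identically zero and whose zero locus the base ($\bS^2$- or $\bH^2$-)Brownian motion cannot occupy throughout a fixed time window; so this martingale accrues a definite amount of quadratic variation in time $T_0$ with uniformly positive probability and, being a time-changed Brownian motion, then crosses the level $a$ within that time with uniformly positive probability. Converting the reflection principle into an exit estimate is routine in all three cases; the part I expect to be hardest is $\widetilde{\operatorname{SL}(2)}$, where the clock $\QV{z}_t$ is \emph{random} and must be controlled against a deterministic time, forcing a quantitative use of the transience of the hyperbolic radial motion, while the compact cases instead require a uniform-in-$q$ non-degeneracy of the vertical fluctuation --- softer, but still needing the recurrence behaviour of the base diffusion near the locus where that fluctuation rate vanishes.
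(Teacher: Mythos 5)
Your overall plan matches the paper's: maximality of the coupling from Theorem~\ref{THM:SummaryCoupling} turns the Aldous inequality into an equality, and the reflection principle reduces everything to a tail bound on $\sigma_a$, which you then extract from the vertical marginal $z_t$. For the Heisenberg groups your route is a bit different from the paper's but sound: you invert the characteristic function $\prod_i\sech(\alpha_i\mu/2)$ (and invoke symmetric unimodality of the resulting convolution) to bound the density at $0$ by $\tfrac{1}{\alpha_n t}$, whereas the paper (Lemma~\ref{Lem:eqn.DensityUpperBound2}) bounds the convolution of $\sech$'s by hand and gets the \emph{uniform} bound $f_t^{\omega}(z)\le \tfrac1{\alpha_n t}$, which makes the unimodality step unnecessary; for the $3$-dimensional group the paper also computes $\Prob(\sigma_a>t)$ exactly via \eqref{Eqn:HExact}, which you don't need. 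For $\operatorname{SL}(2)$ and $\operatorname{SU}(2)$ your heat-kernel-lower-bound-plus-Markov-iteration is exactly the paper's argument (Theorems~\ref{THM:SVert} and~\ref{THM:SUVert}).

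The gap is in the $\widetilde{\operatorname{SL}(2)}$ case, and you flag it yourself. Your split
\[
\Prob(\sigma_a>t)\le \Prob\Big(\sup_{u\le\eps t}W_u<a\Big)+\Prob\big(\QV{z}_t<\eps t\big)\le \frac{2a}{\sqrt{2\pi\eps t}}+Ce^{-\kappa t}
\]
produces an additive error $Ce^{-\kappa t}$ that does not scale with $a$, so to absorb it into $c'\tfrac{2a}{\sqrt t}$ you are forced to let $T_0\to\infty$ as $a\downarrow 0$. But the theorem asserts, and genuinely needs, that $T_0$ is independent of $a$: the vertical gradient estimate is obtained by dividing by $2a$ and sending $a\downarrow 0$ at a \emph{fixed} $t>T_0$, which is impossible if $T_0$ escapes to infinity. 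The paper avoids this by a multiplicative rather than additive treatment of the time change: it conditions on $S(t)=\QV{z}_t$ to write $F_t(x)=\int_0^\infty\Phi\big(x\sqrt{t}/\sqrt{s}\big)\,dG_t(s)$, integrates by parts against the cdf $G_t$, and proves $\int_0^t s^{-3/2}G_t(s)\,ds=O(1/\sqrt t)$ using a near-$0$ bound $G_t(s)\lesssim \tfrac{s(-\log s)}{t}$ obtained from the local integrability of the Green's function of $\bH^2_+$; this keeps every contribution linear in $x$ (hence in $a$) and delivers $F_t(x)\le\Phi(x)+cx$ for all $x\ge0$ and all $t>T_0$, with $T_0$ uniform. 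To repair your argument along your own lines you would need to replace the crude splitting by a bound of the form $\Prob(\sigma_a>t)\le a\sqrt{2/\pi}\,\E\big[\QV{z}_t^{-1/2}\big]$ and then show $\E\big[\QV{z}_t^{-1/2}\big]=O(1/\sqrt t)$ uniformly in $t>T_0$, which is exactly the content of the paper's $G_t$ estimates; your transience/drift comparison gives the right qualitative picture of $\QV{z}_t$ but not, as stated, the required control of its inverse square root near $0$.
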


For the Heisenbergs groups and $\widetilde{\operatorname{SL}(2)}$, these results are optimal. For the 3-dimensional Heisenberg group, while the coupling of \cite{BGM} already gives the correct asymptotic decay rate of $\frac{c}{\sqrt{t}}$, in our case, the bound above follows from an exact expression in the 3-dimensional Heisenberg group,
\[
\Prob\lp \sigma_a> t\rp = \frac{4}{\pi}\arctan\lp\tanh\lp\frac{\pi}{2}\cdot\frac{a}{t}\rp\rp,
\]
which we derive using the reflection principle; see \eqref{Eqn:HExact} and Theorem \ref{THM:HLimit}. We also note that vertical gradient bounds for the heat semigroup generally give better control over the change in $P_t f$ along vertical fibers than horizontal gradient bounds do. In particular, considering the 3-dimensional Heisenberg group, integrating along the vertical fiber gives
\[
\lab P_tf(0,0,0)-P_tf(0,0,z)\rab \leq \frac{z}{t}\|f\|_{\infty} .
\]
On the other hand, for small $z$, the sub-Riemannian distance between $(0,0,0)$ and $(0,0,z)$ is comparable to $\sqrt{z}$, so integrating the horizontal gradient bound from Theorem \ref{THM:HorSum} below, again for the 3-dimensional Heisenberg group, along a minimal geodesic gives $\lab P_tf(0,0,0)-P_tf(0,0,z)\rab \leq C\frac{\sqrt{z}}{\sqrt{t}}\|f\|_{\infty}$ for some constant $C>0$. And this is weaker for small $z$ or large $t$ than what we just got from the vertical gradient bound. (Indeed, because $Z=XY-YX$, a vertical gradient is in a sense more akin to a second-order gradient.)

For the other two cases, $\operatorname{SL}(2)$ and $\operatorname{SU}(2)$, obviously, one would like to gain sufficient understanding of $B_t$ and $\sigma_a$ to compute the sharp value of the exponent in the above asymptotic bound for $\Prob\lp \sigma_a> t\rp$. We don't consider the spatial dependence in these cases (meaning the dependence on $a$). But we conjecture that the sharp bound in time will also be linear in $a$, so that one obtains a vertical gradient estimate with exponential decay of the vertical gradient, analogous to the other cases. These are also the two cases considered in \cite{Benefice2023b}, where it is shown that the vertical couplings introduced there have a coupling time that decays like
$C_qe^{-ct} \dist_{\sR}(p,p')^{2q}$
for any $q\in (0,1)$, with $C_q$ a constant depending on $q$, $c$ an unknown positive constant, and where $d_{\sR}(p,p')$ is the sub-Riemannian distance between the starting point $p$ and $p'$, taken to be on the same vertical fiber. To get linear dependence in $a$ (in our notation) and thus a vertical gradient bound, one would need this with $q=1$.

For points not on the same vertical fiber, following \cite{BGM}, we use a two-stage coupling. First, the marginal processes given by the submersion onto a Riemannian model space (what we call the horizontal component) are coupled, using standard reflection couplings, with the vertical component ``coming along for the ride.'' Once the horizontal component has successfully coupled (if it does), the two processes are then on the same vertical fiber, with a random vertical displacement, at which point we can run the vertical coupling described in Theorem \ref{THM:SummaryCoupling}. Again, the nature of the results varies depending on $M$. For the 3-dimensional Heisenberg group, the relevant estimates on the horizontal coupling were already done in \cite{BGM}, and we recover their results with our vertical coupling in place of theirs. We also extend these results to any non-isotropic Heisenberg group, with the same sharp order of decay, which we summarize in a moment with the corresponding horizontal gradient bound. In the cases of $\operatorname{SL}(2)$ or $\widetilde{\operatorname{SL}(2)}$, the horizontal component is Brownian motion on the hyperbolic plane, which cannot be successfully coupled. In particular, there does not exist a successful coupling of Brownian motion on $\operatorname{SL}(2)$ or $\widetilde{\operatorname{SL}(2)}$ if the initial points are on different vertical fibers. Thus, we give an asymptotic result determining the limiting behavior of the heat semigroup (we note that the fact that harmonic functions on $\operatorname{SL}(2)$ are constant on fibers was also given in \cite{Benefice2023b}). Finally, for $\operatorname{SU}(2)$, just as for the vertical coupling, we quickly see, essentially by compactness, that there is a global exponential bound on the total variation distance from any two initial points, again as in \cite{Benefice2023b}. We summarize these results as follows; additional related results are found in the main body of the paper.

\begin{THM}\label{THM:HorSum}
For any of the choices of $M$ as in Theorem \ref{THM:SummaryCoupling}, let $\nabla_{\mathcal{H}}$ be the horizontal gradient induced by the sub-Riemannian structure and $P_t$ the heat semigroup associated to $M$-Brownian motion.
\begin{itemize} 
\item Let $M$ be one of the Heisenberg groups. Then there exists a positive constant $C$ such that, for any
 $f\in L^{\infty}$ and for any $t \geq 1$
\[
\Vert \nabla_{\mathcal{H}} P_t f \Vert_{\infty} \leqslant \frac{C}{\sqrt{t}}\Vert f\Vert_{\infty}.
\]
\item Let $M$ be $\operatorname{SL}(2)$ or $\widetilde{\operatorname{SL}(2)}$. Then for any two points $q$ and $\tilde{q}$ not on the same vertical fiber, let $2r$ be the distance (in the hyperbolic plane) between their submersions onto the hyperbolic plane. For $f\in L^{\infty}\lp \widetilde{\operatorname{SL}(2)} \rp$, we have
\[
\limsup_{t\to \infty} \lab P_t f (q) -P_t f\lp\tilde{q}\rp  \rab \leq \frac{4}{\pi}\arctan\lp \tanh \lp\frac{r}{2}\rp \rp \cdot\|f\|_{\infty}
\]
Finally, if $f$ is also harmonic, then it is constant on vertical fibers and is given by the lift of a bounded harmonic function on the hyperbolic plane.
\item Let $M=\operatorname{SU}(2)$. Then there exist constants $K>0$, $k>0$, and $T_0>0$ such that, for any two points $q$ and $\tilde{q}$ of $\operatorname{SU}(2)$, the total variation distance between the laws of Brownian motions $B_t$ and $\tilde{B}_t$ from $q$ and $\tilde{q}$ respectively, satisfies
\[
 \TVd\lp \Law\lp B_t\rp,\Law( \tilde{B}_t )  \rp < K e^{-kt} \quad \text{for all $t>T_0$.}
\]
\end{itemize}
\end{THM}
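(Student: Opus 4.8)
The plan is to derive all three parts from the two‑stage coupling sketched in the introduction together with the Aldous inequality \eqref{Eqn:Aldous}. Fix initial points $q,\tilde q$, write $\pi\colon M\to N$ for the submersion onto the Riemannian base (so $N=\bR^{2n}$ for the Heisenberg groups, $N$ is the hyperbolic plane for $\operatorname{SL}(2)$ and $\widetilde{\operatorname{SL}(2)}$, and $N$ is the round $2$‑sphere for $\operatorname{SU}(2)$), and let $2r$ be the $N$‑distance between $\pi(q)$ and $\pi(\tilde q)$. First I would run a standard reflection (mirror) coupling of the two $N$‑Brownian motions $\pi(B_t),\pi(\tilde B_t)$, carrying the vertical components along, up to the first time $\tau_0$ at which the base components agree; by the strong Markov property the two processes then lie on a common vertical fiber with some random displacement $2a=2a(\tau_0)$, and from that moment one runs the vertical reflection coupling of Theorem \ref{THM:SummaryCoupling}, whose time, conditionally on $a$, has the law of $\sigma_a$ from Theorem \ref{THM:SummaryVert}. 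For the resulting coupling time $\tau$ one always has $\TVd(\Law(B_t),\Law(\tilde B_t))\le\Prob(\tau>t)$ and $|P_tf(q)-P_tf(\tilde q)|\le 2\|f\|_\infty\Prob(\tau>t)$, so it suffices to control $\Prob(\tau>t)$ and then, for the gradient bounds, to differentiate in $\tilde q$ as $\tilde q\to q$ along a horizontal curve — using left‑invariance to make the estimate uniform in $q$ and compactness of the horizontal unit sphere to make it uniform in the direction.

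For the Heisenberg groups I would split $\Prob(\tau>t)\le\Prob(\tau_0>t/2)+\E\bigl[\Ind_{\{\tau_0\le t/2\}}\Prob(\sigma_{a(\tau_0)}>t/2\mid\mathcal F_{\tau_0})\bigr]$. Under the mirror coupling on $\bR^{2n}$ the separation of the base particles is a one‑dimensional Brownian motion started at $2r$ and $\tau_0$ is its first hitting time of $0$, so the reflection principle gives $\Prob(\tau_0>t/2)\le 2r/\sqrt{\pi t}$. Into the second term I would insert the bound $\Prob(\sigma_a>s)\le 2a/(\alpha_n s)$ from Theorem \ref{THM:SummaryVert}, leaving the task of showing $\E[a(\tau_0)\Ind_{\{\tau_0\le t/2\}}]=O(r\sqrt t)$, where $2a(\tau_0)$ is the difference, at the meeting time $\tau_0$, of the (weighted) Lévy‑type vertical functionals of the two base paths. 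This is the crux and the main obstacle: $a(\tau_0)$ is heavy‑tailed, so a crude second‑moment estimate only gives $O(\sqrt r\,t^{3/4})$, which is too weak after dividing by $r$. Instead one computes that, conditionally on the base path, the signed vertical separation accumulated by $\tau_0$ — whose absolute value is $2a(\tau_0)$ — is centered Gaussian with variance $\int_0^{\tau_0}\dist(\pi(B_u),\pi(\tilde B_u))^2\,du$, which yields $\E[a(\tau_0)\mid\tau_0=T]\lesssim r\sqrt T+T$ — the accumulated vertical displacement is controlled by the conditioned horizontal coupling time — and then integrates this against the hitting‑time tail $\Prob(\tau_0>u)\le 2r/\sqrt{2\pi u}$. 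The result is $\Prob(\tau>t)=O(r/\sqrt t)$ uniformly for bounded $r$, hence $\|\nabla_{\mathcal H}P_tf\|_\infty\le Ct^{-1/2}\|f\|_\infty$. For the three‑dimensional group this recovers \cite{BGM}; the non‑isotropic case runs identically, with the weights $\alpha_i$ entering the vertical functional and the constant $\alpha_n$.

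For $\operatorname{SL}(2)$ and $\widetilde{\operatorname{SL}(2)}$ the base is the hyperbolic plane, where Brownian motion is transient, so the two‑stage coupling fails precisely when the base components never meet, and the point is to compute that probability. Under the reflection coupling the two base particles drive the bisecting geodesic symmetrically, so the half‑distance $\rho_t=\tfrac12\dist(\pi(B_t),\pi(\tilde B_t))$ evolves (up to time‑normalization) as the distance‑to‑a‑geodesic diffusion $d\rho_t=dB_t+\tfrac12\tanh(\rho_t)\,dt$, which is transient to $+\infty$ with scale function $s(\rho)=\int_0^\rho\sech u\,du=2\arctan(\tanh(\rho/2))$ and $s(\infty)=\pi/2$; from $\rho_0=r$ the base particles therefore fail to meet with probability $s(r)/s(\infty)=\tfrac4\pi\arctan(\tanh(r/2))$, and on the complementary event they do meet, after which the vertical coupling succeeds almost surely by Theorem \ref{THM:SummaryVert}. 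Hence $\tau<\infty$ exactly on the meeting event, $\Prob(\tau>t)\to\tfrac4\pi\arctan(\tanh(r/2))$, and the stated $\limsup$ bound follows. For the harmonic‑function claim: a bounded harmonic $f$ gives a bounded martingale $f(B_t)$, so $P_tf=f$; applying this with $q,\tilde q$ on the same fiber and using that the vertical coupling there is successful (Theorem \ref{THM:SummaryCoupling}), $f(q)-f(\tilde q)=\E[(f(B_t)-f(\tilde B_t))\Ind_{\{\tau>t\}}]\to 0$, so $f$ is constant on fibers; it thus descends to a bounded $\bar f$ on the hyperbolic plane, harmonic because $\pi$ intertwines the sub‑Laplacian on fiber‑constant functions with the hyperbolic Laplacian, and $f=\pi^*\bar f$.

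For $\operatorname{SU}(2)$, compact with base the round $2$‑sphere, I would prove a uniform one‑step estimate and iterate. In time $T_1/2$ the reflection coupling on the sphere couples the base particles with probability at least some $\delta_1>0$ uniformly over all pairs of base points — the only difficulty being pairs near the cut locus, handled either by a local modification of the coupling or by the crude fact that two Brownian motions on a compact manifold both lie in a fixed small geodesic ball at time $T_1/4$ with probability bounded below, after which one mirror‑couples inside the ball — and once the base has coupled the processes lie on a common fiber with displacement at most the fixed finite length of the circular fiber, so the vertical coupling finishes within the remaining time $T_1/2$ with probability at least some $\delta_2>0$, uniformly, by the $a$‑independent exponential bound of Theorem \ref{THM:SummaryVert} (taking $T_1$ large enough). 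Hence for all $q,\tilde q$ the coupling satisfies $\Prob(\tau\le T_1)\ge\delta:=\delta_1\delta_2>0$; gluing such couplings at times $T_1,2T_1,\dots$ via the Markov property of the marginal Brownian motions gives $\Prob(\tau>nT_1)\le(1-\delta)^n$, and hence $\TVd(\Law(B_t),\Law(\tilde B_t))\le\Prob(\tau>t)\le Ke^{-kt}$, as claimed. The chief obstacle in the whole argument remains the heavy‑tail estimate $\E[a(\tau_0)\Ind_{\{\tau_0\le t/2\}}]=O(r\sqrt t)$ in the Heisenberg case.
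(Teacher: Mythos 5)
Your two-stage strategy (base reflection coupling followed by the vertical reflection coupling) is exactly the paper's, but in two of the three cases you take a genuinely different route at the key step, and both are worth comparing. For the hyperbolic base in the $\operatorname{SL}(2)$/$\widetilde{\operatorname{SL}(2)}$ case, you compute the probability that the mirror coupling on $\bH^2_+$ fails by passing to the half-distance-to-the-mirror-geodesic diffusion $d\rho_t = dW_t + \tfrac12\tanh(\rho_t)\,dt$ and reading off the answer from its scale function $s(\rho) = \int_0^\rho \sech u\,du = 2\arctan(\tanh(\rho/2))$; the paper instead writes the same quantity as the harmonic measure of the ideal boundary versus the bisecting diameter in the Poincar\'e disk and evaluates a Poisson integral explicitly. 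The two answers agree ($s(r)/s(\infty) = \tfrac4\pi\arctan(\tanh(r/2))$), and the scale-function route is arguably cleaner and more robust, as it avoids the trigonometric simplification entirely. For the Heisenberg crux, the paper (following \cite{BGM}) introduces the \emph{invariant difference of stochastic areas} $A_t = z_t - \tilde z_t + \tfrac12\sum_i\alpha_i(x^i_t\tilde y^i_t - \tilde x^i_t y^i_t)$, bounds the tail of $A^i_{T^i_1}$ via \cite[Lemma 3.3]{BGM}, and then controls $\E[\,|A_{T_1}|/t \wedge 1\,]$ by a dyadic decomposition in $k$ (Lemma \ref{Lem:TailExpectationBound}), rather than conditioning on $\tau_0$. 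Your route — conditioning on $\tau_0 = T$, noting $A_{\tau_0} - A_0$ is then a centered Gaussian with variance $\int_0^T |\,\tilde y_s - y_s\,|^2\,ds$, estimating the conditional expectation, and integrating against the first-passage density — is conceptually attractive but not fully carried out and needs care: you have silently dropped the initial vertical displacement $A_0$ (which supplies the $|v|/t$ term in the paper's bound, and which one cannot simply omit in the horizontal gradient estimate since a horizontal displacement of $q$ generically changes $z$ at first order), and the $r\sqrt T$ piece of your conditional bound integrates to a term of order $r^2\log(t/r^2)$ rather than $r\sqrt t$; this is harmless for the gradient bound because it is lower order as $r\to 0$, but it should be stated. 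You correctly identify the heavy-tailedness of $a(\tau_0)$ as the central obstacle, and your observation that a crude second-moment bound yields only $O(\sqrt r\, t^{3/4})$ is accurate. For $\operatorname{SU}(2)$ your iterate-a-fresh-coupling-at-times $nT_1$ construction is a correct variant of the paper's argument (which instead uses the maximality and Markov property of the $\bS^2$ reflection coupling directly together with the uniform lower bound on $p_1^{\bS^2}$), but your worry about pairs near the cut locus is unnecessary: the uniform heat-kernel lower bound plus maximality handles antipodal points without any local modification of the coupling.
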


Here, it is the result for $\operatorname{SU}(2)$ which should be improved. In particular, after the vertical coupling is better understood, one wishes to understand the joint distribution of the horizontal coupling time and vertical displacement upon horizontal coupling, to get the exponential rate $K$ in the above. Moreover, it should be possible to control the dependence of the coupling time on the initial points to obtain the corresponding horizontal gradient estimate. Indeed, \cite{Benefice2023b} gives such an argument for the version of the coupling used there, showing that the horizontal gradient of the heat semigroup decays at rate $e^{-ct}$ for some non-explicit $c>0$. However, we defer this direction until an explicit rate for the vertical coupling can be understood, upon which the exponential rate for the two-stage coupling depends.

\subsection{Acknowledgements}
The second author thanks Erlend Grong for useful discussions about diffusions on Lie groups. The authors also thank the anonymous referees for many suggestions that improved the final version.

\section{The 3-dimensional Heisenberg group}

\subsection{Geometry and submersion structure of $\bH$}

Let $(x,y,z)$ be the standard (Cartesian) coordinates for the 3-dimensional Heisenberg group $\bH\cong \bR^3$. We recall that the sub-Riemannian structure is given by the orthonormal basis
\[
X = \partial_x-\frac{y}{2}\partial_z \quad\text{and}\quad Y=\partial_y+\frac{x}{2}\partial_z ,
\]
which determines both the horizontal distribution $\sH=\spn\{ X,Y \}$ and the inner product on it.
We also introduce the vector field $Z=\partial_z$, so that the Lie bracket structure is given by
\[
[X,Y]=Z \quad\text{and}\quad [X,Z]=[Y,Z]=0 .
\]
As implied by the name, $\bH$ is a Lie group, with the group law
\[
(x,y,z)\cdot(x^{\prime},y^{\prime},z^{\prime})=\lp x+x^{\prime},y+y^{\prime},z+z^{\prime}+\frac{1}{2}\lp xy^{\prime}-yx^{\prime} \rp\rp ,
\]
and multiplication on the left gives isometries of the sub-Riemannian structure (and note that $X$, $Y$ and $Z$ are left-invariant vector fields). We are primarily concerned with the Heisenberg Laplacian $\Delta_{\sH} =X^2+Y^2 $
and the diffusion generated by $\frac{1}{2}\Delta_{\sH}$, which we will call Heisenberg Brownian motion. We also have the horizontal gradient $\nabla_{\mathcal{H}}$, defined so that for any smooth $f$, $ \nabla_{\mathcal{H}}f\in \mathcal{H}$ is the dual to $df|_{\mathcal{H}}$ determined by the inner product on $\mathcal{H}$. In terms of the natural frame, we have $ \nabla_{\mathcal{H}}f=(Xf)X+(Yf)Y$. 

The map $\pi:\bH\rightarrow\bR^2$ given by $\pi(x,y,z)= (x,y)$ is a submersion, as elaborated on in \cite{Montgomery} (especially the first chapter). The description of the sub-Riemannian structure on $\bH$ in terms of the submersion onto $\bR$ and the associated horizontal lift extends to Brownian motion.  In particular, a Heisenberg Brownian motion started from $(x_0,y_0,z_0)$ projects, under $\pi$, to an $\bR^2$-Brownian motion started from $(x_0,y_0)$. Conversely, an $\bR^2$-Brownian motion started from $(x_0,y_0)$ has, for any $z_0\in\bR$, a unique horizontal lift which gives a Heisenberg Brownian motion started from $(x_0,y_0,z_0)$ (see \cite{EltonBook} for a discussion of the horizontal lift of a diffusion to a bundle). Further, as indicated above, the $z$-process of this lift is given by the L\'evy area associated to the $\bR^2$-Brownian motion.

The vertical lines of the form $\{(x_0,y_0,z): z\in \bR\}$ are obviously the fibers of the submersion. If $(x,y,z)$ and $(x,y,z')$ are two points on the same vertical fiber, then we call $|z'-z|$ their vertical displacement, and we observe that it is preserved under left-multiplication, so that it is a well-defined invariant of the sub-Riemannian structure. This also means that for any two such points, we can take Cartesian coordinates such that they become, up to exchanging them, $(0,0,0)$ and $(0,0,|z'-z|)$. Continuing, we call the $\bR^2$-distance between $\pi(x,y,z)$ and $\pi(x',y',z')$ the horizontal displacement between the points. Since rotation about the $z$-axis is also an isometry of the structure, this is also a well-defined invariant of the sub-Riemannian structure. Note that $\bH$ has the largest possible isometry group that respects the vertical fibration. This is compatible with the connection, noted in Section \ref{Sect:Intro1}, between maximal couplings and global symmetries of the space. We also note that $Z$ is not only tangent to the vertical fibers, but is the unique (or unique to up to sign, if we don't consider $\sH$ to be oriented by making $\{X,Y\}$ and oriented frame) vector field tangent to the fiber induced by the sub-Riemannian structure in the sense that $Z=[X,Y]$ (in particular, it is the Reeb vector field, though we don't specifically need that notion here). Since $Z$ gives a natural notion of ``vertical scale,'' we let $\nabla_{V}f = (Zf)f$ be the vertical gradient. Further, for the structures we consider, one often considers an associated Riemannian metric in which $Z$ is taken to complete the orthonormal frame (see, for example, the influential paper \cite{BG-Gamma}). For such a metric, $\|\nabla_{V}f\| = |Zf|$. Motivated by these considerations, we adopt the convention that $\|\nabla_{V}f\| = |Zf|$, although this is simply notation for us.

We finish this section by introducing cylindrical coordinates on $\bH$. Such coordinates consist in replacing $x$ and $y$ by $r$ and $\theta$ in the usual way (and leaving $z$ alone). Then the Heisenberg Laplacian becomes
\[
\Delta_{\sH}= \partial_r^2+\frac{1}{r}\partial_r + \frac{1}{r^2} \partial^2_{\theta} +\frac{1}{4}r^2\partial^2_z+ \partial_{\theta}\partial_z .
\]
It follows that Heisenberg Brownian motion is given, in cylindrical coordinates, by the following system of SDEs,
\[
dr_t = dW_t^{(1)} +\frac{1}{2r_t}\, dt, \quad
d\theta_t = \frac{1}{r_t}\, dW_t^{(2)}, \quad \text{and }
dz_t=  \frac{1}{2}r_t \, dW_t^{(2)}
\]
where $W_t^{(1)}$ and $W_t^{(2)}$ are independent one-dimensional Brownian motions. Further, we see that $z_t$ can be written as
\begin{equation}\label{Eqn:HeisTC}
z_t = z_0+W_{\int_0^t \frac{1}{4}r_s^2 \, ds}
\end{equation}
where $W_t$ is a one-dimensional Brownian motion independent of $r_t$ (and $r_t$ is a 2-dimensional Bessel process). The Heisenberg group is straightforward enough that it's not necessary to introduce cylindrical coordinates, but the direct analogue will be used when we study other spaces below.

\subsection{A maximal vertical coupling}\label{Sec.HeisVCoupling}
We have described the basic construction of the coupling in Section \ref{Sect:Intro1}. While it is clear that the construction given in \eqref{Eqn:HFlip} and \eqref{Eqn:VFlip} produces coupled processes $B_t$ and $\tilde{B}_t$ on $\bH$, with $B_t$ a Heisenberg Brownian motion and the coupling occurring at $\sigma_a$, it may not be completely obvious that $\tilde{B}_t$ is also a Heisenberg Brownian motion. So we begin with a lemma clarifying this point. Moreover, we prove it in the generality needed for the rest of the paper. In particular, we will see that $SL(2)$ has a similar representation as a submersion over the hyperbolic plane $\bH^2_+$ while $SU(2)$ can be viewed as a submersion over the sphere $\bS^2$ (that is, $\bS^2$ and $\bH^2_+$ are the 2-dimensional space forms of curvature $1$ and $-1$, respectively). Also note that $\sigma_a$ is a stopping time with respect to the filtration generated by $(x_t,y_t)$ (since $z_t$ is adapted to this filtration) in all cases considered below, so that the strong Markov property applies.

\begin{Lemma}\label{Lem:BM}
Let $M$ be one of the three 2-dimensional space forms, that is, $\bR^2$, $\bS^2$, or $\bH^2_+$, with polar coordinates $(r,\theta)$. Let $X_t$ be a Brownian motion started from the origin (the point with $r=0$), and, for any $a\neq 0$, let $\sigma_a$ be the first time the associated stochastic area hits the level $a$. Then $\sigma_a$ is finite a.s. Also, for any point $(r,\theta)\in M$ with $r\neq 0$ (and $r< \pi$ if $M=\bS^2$), let $R_{(r,\theta)}$ be the reflection through the unique (complete) geodesic containing $(0,0)$ and $(r,
\theta)$. Then the process 
\[
\tilde{X}_t = \begin{cases}
R_{\lp r\lp X_{\sigma_a} \rp ,\theta\lp X_{\sigma_a}\rp\rp} X_t & \text{for $t\leq \sigma_a$} \\
X_t  & \text{for $t> \sigma_a$}
\end{cases}
\]
is a Brownian motion from the origin.
\end{Lemma}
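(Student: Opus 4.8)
The plan is to reduce the assertion to two standard inputs --- the strong Markov property of $B$, and the invariance of Brownian motion under the global (rotation and reflection) isometries of $M$ --- organized through the skew-product description of $B$. Recall that on each of $\bR^2,\bS^2,\bH^2_+$, Brownian motion $B_t$ from the origin $o$ can be built from a uniform angle $\Phi$ on the circle and two independent standard one-dimensional Brownian motions $W^{(1)},W$, all three independent, as follows: the radial part $r_t=\dist(o,B_t)$ is the diffusion driven by $W^{(1)}$ (a Bessel-type process, with $0$ an entrance point and, in the spherical case, $\pi$ inaccessible after time $0$), and, writing $B_t=\exp_o(r_t\,u(\Theta_t))$ with $u(\cdot)$ the unit vector of a given angle, the angular coordinate satisfies $d\Theta_t=dW_t/f(r_t)$ about the initial value $\Phi$, where $f(r)=r,\sin r,\sinh r$ respectively. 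This realizes $B=\Psi(\Phi,W^{(1)},W)$ for a fixed measurable map $\Psi$; the infinite winding of $\Theta_t$ as $t\downarrow 0$ is harmless because $r_t\to 0$ there. In this picture the stochastic area is $z_t=\int_0^t g(r_s)\,dW_s$ with $g=h/f$, $h$ the area integrand (so $g(r)=r/2$ on $\bR^2$), hence, conditionally on $W^{(1)}$, a continuous local martingale adapted to the natural filtration of $W$ with $\langle z\rangle_t=A_t:=\int_0^t g(r_s)^2\,ds$.

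Finiteness of $\sigma_a$ follows at once: conditionally on $W^{(1)}$ we have $A_\infty=\infty$ a.s., since $r_s$ does not converge to a pole (it is recurrent in $(0,\pi)$ in the spherical case and tends to $\infty$ in the other two), so $g(r_s)^2$ is bounded away from $0$ along a set of times of infinite measure; by Dambis--Dubins--Schwarz $z_t=\gamma_{A_t}$ for a standard Brownian motion $\gamma$, and $z$ hits the level $a$ a.s.; integrating out $W^{(1)}$ gives $\sigma_a<\infty$ a.s. Also $\sigma_a>0$ (as $z_0=0\neq a$ and $z$ is continuous), and points are polar for Brownian motion on a surface, so a.s.\ $B_{\sigma_a}\notin\{o,-o\}$, i.e.\ $0<r(B_{\sigma_a})<\pi$; thus the reflection $R:=R_{(r(B_{\sigma_a}),\theta(B_{\sigma_a}))}$ is a.s.\ well defined, and since it fixes both $o$ and $B_{\sigma_a}$ the process $\tilde B$ is continuous (with $\tilde B_{\sigma_a}=B_{\sigma_a}$) and starts from $o$.

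For the remaining claim I would argue in two steps. \emph{Step 1: $\tilde B$ has the same law as $\hat B$}, where $\hat B_t:=R(B_t)$ for \emph{all} $t\geq 0$. Since $R$ is $\mathcal F_{\sigma_a}$-measurable, the strong Markov property shows that, conditionally on $\mathcal F_{\sigma_a}$, both $(B_{\sigma_a+s})_{s\geq 0}$ and its image $(R(B_{\sigma_a+s}))_{s\geq 0}$ are Brownian motions from $B_{\sigma_a}$ (as $R$ fixes $B_{\sigma_a}$), independent of $\mathcal F_{\sigma_a}$. Hence $\tilde B$ and $\hat B$ both equal the $\mathcal F_{\sigma_a}$-measurable stopped path $\eta:=(R(B_t))_{0\leq t\leq \sigma_a}$ concatenated with an independent Brownian restart from its endpoint $B_{\sigma_a}$; as $B_{\sigma_a}$ (and $\sigma_a$) is $\sigma(\eta)$-measurable, conditioning the restart down from $\mathcal F_{\sigma_a}$ to $\sigma(\eta)$ does not change its law, so $\tilde B$ and $\hat B$ have the same conditional law given $\sigma(\eta)$, and therefore $\Law(\tilde B)=\Law(\hat B)$. \emph{Step 2: $\hat B$ is a Brownian motion from $o$}. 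In polar coordinates $R$ acts by $\theta\mapsto 2\theta(B_{\sigma_a})-\theta$, so $\hat B$ has radial part $r$ (driven by $W^{(1)}$) and angular increments $-dW_t/f(r_t)$; that is, $\hat B=\Psi(\hat\Phi,W^{(1)},-W)$ with $\hat\Phi:=2\theta(B_{\sigma_a})-\Phi=\Phi+2(\theta(B_{\sigma_a})-\Phi)$. Now $-W$ is a standard Brownian motion independent of $W^{(1)}$, and $\theta(B_{\sigma_a})-\Phi$ is a functional of $(W^{(1)},W)$, so since $\Phi$ is uniform on the circle and independent of $(W^{(1)},W)$, $\hat\Phi$ is uniform on the circle and independent of $(W^{(1)},-W)$. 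Thus $(\hat\Phi,W^{(1)},-W)\stackrel{d}{=}(\Phi,W^{(1)},W)$, and applying $\Psi$ gives $\hat B\stackrel{d}{=}B$. Combining the two steps, $\tilde B$ is a Brownian motion from $o$.

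The one genuinely substantive point is conceptual: understanding why reflection through the \emph{random} geodesic $o\,B_{\sigma_a}$ produces a Brownian motion even though the reflection anticipates the path. The resolution above is that, once the radial driving noise $W^{(1)}$ is fixed, the construction is controlled entirely by the angular driving motion $W$, which is independent of $W^{(1)}$, and then the reflection is just the global orientation-reversing isometry $W\mapsto -W$ (Step 2), with the mismatch after $\sigma_a$ absorbed by the strong Markov property (Step 1) --- this is precisely the skew-product incarnation of the proof of the classical one-dimensional reflection principle. Everything else is routine modulo the skew-product representation with this independence and the identification of $f$, $g$ and the divergence of $A_t$ on each space form; in the plane these are contained in \eqref{Eqn:HeisTC}, and the spherical and hyperbolic analogues are set up when those spaces are treated below (and are in any case classical for Brownian motion on rotationally symmetric surfaces).
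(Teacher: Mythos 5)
Your argument is correct, and it reaches the conclusion by a genuinely different route than the paper. The paper proves that $\hat{B}_t:=R(B_t)$ (reflection applied for \emph{all} $t$) is a Brownian motion by computing finite-dimensional distributions: it conditions on the uniformly distributed angle $\theta(B_{\sigma_a})$, uses rotational invariance to replace the conditional law given $\theta(B_{\sigma_a})=\theta$ by a rotation of the conditional law given angle $0$, converts reflection into $\Rot_{2\theta}\circ R_{(1,0)}$, and recognizes the resulting integral as the decomposition of $\Prob(\cap_i\{B'_{t_i}\in A_i\})$ for the Brownian motion $B'=R_{(1,0)}B$, using that $\sigma_a$ for $B$ is $\sigma_{-a}$ for $B'$. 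You instead package the same two symmetries (rotational invariance of the law from the origin, hence a uniform independent angle; the fact that $\sigma_a$ and the radial path are functionals of $(W^{(1)},W)$ alone) into the skew-product representation, where the random reflection becomes simply $(\Phi,W)\mapsto(2\theta(B_{\sigma_a})-\Phi,-W)$; the distributional invariance is then immediate from the uniformity of $\Phi$ and the symmetry of $W$, with no conditioning computation and no need to track $\sigma_{-a}$. Your Step 1 and the paper's final paragraph are the same strong-Markov concatenation argument, just run in opposite orders (you show $\Law(\tilde B)=\Law(\hat B)$ and then that $\hat B$ is Brownian; the paper shows $\hat B$ is Brownian and then that $\tilde B$ is $\hat B$ stopped at a stopping time and restarted independently). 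Your direct verification that $\sigma_a<\infty$ on $\bS^2$ and $\bH^2_+$ (via divergence of the clock $A_t$) is also a small self-contained improvement over the paper's citation of the literature.

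One technical wrinkle you flag but do not fully resolve: for Brownian motion started \emph{at} the origin, the literal representation $\Theta_t=\Phi+\int_0^t dW_s/f(r_s)$ is problematic, since $\int_0^t ds/f(r_s)^2=\infty$ a.s.\ (infinite winding as $t\downarrow 0$), so the stochastic integral from $0$ is not defined and ``the initial value $\Phi$'' has no direct meaning. This is standard to repair --- e.g., define the path modulo rotation as a measurable functional $\Psi_0(W^{(1)},W)$ using angular \emph{increments} $\Theta_t-\Theta_s=\int_s^t dW_u/f(r_u)$ for $0<s\le t$, normalized say by $\Theta_1=0$, and then set $B=\Rot_\Phi\Psi_0(W^{(1)},W)$ with $\Phi$ uniform and independent of $(W^{(1)},W)$ --- and with that formulation your Step 2 goes through verbatim ($\theta(B_{\sigma_a})=\Phi+F(W^{(1)},W)$, so $\hat\Phi=\Phi+2F$ is again uniform and independent). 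You should state the representation in this increment-based form rather than as an integral from $0$.
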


\begin{proof}
We begin with basic geometric properties shared by all three possible choices of $M$; the point is that the proof relies only on these properties and not on other details of the geometry. Brownian motion started from the origin is rotationally symmetric, as is the associated stochastic area, and thus the distribution of $X_{\sigma_a}$ (for any $a\neq 0$) is also rotationally symmetric. The reflection $R_{(r,\theta)}$ is an orientation-reversing isometry which preserves (the law of) Brownian motion while reversing the sign of the associated stochastic area. In addition, $R_{(r,\theta)}$ depends only on $\theta$ (not on $r$).

On any Riemannian manifold (of dimension more than 1), Brownian motion doesn't hit points. In particular, once $X_t$ leaves the origin, it almost surely never returns, so that $\theta\lp X_{\sigma_a}\rp$ and thus also $\tilde{X}_t$ are well defined. (Note that one could also make an arbitrary choice of a geodesic to reflect over if $ r\lp X_{\sigma_a} \rp=0$, like $\{\theta=0\}\cup\{\theta=\pi\}$, and the geometry of the reflection would work just as well.) 

If $M=\bR^2$, then the stochastic area has the representation \eqref{Eqn:HeisTC}, and because $r_t$ is a 2-dimensional Bessel process, it's immediate that $\int_0^t r^2_s\, ds\rightarrow \infty$ a.s. Thus $z_t$ hits every real value in finite time, and $\sigma_a$ is finite, a.s. As for the stochastic area processes on $\bS^2$ and $\bH^2_+$, they have been studied in \cite{FabriceWangArea} (see also the book-length treatment \cite{FJN-Book}), and it follows from this that $\sigma_a$ is a.s.\ finite in those cases as well. (We will see this explicitly later when we give the analogues of  \eqref{Eqn:HeisTC}, but for now, we record it, so that $\tilde{X}_t$ is well defined without needing a caveat.)

Let $\hat{X}_t$ be the process 
\[
\hat{X}_t=R_{\lp r\lp X_{\sigma_a} \rp ,\theta\lp X_{\sigma_a}\rp\rp} X_t ,
\]
which is well defined because $\sigma_a$ is a.s.\ finite. The main task is to show that $\hat{X}_t$ is a Brownian motion (on $M$, of course). It's clear that $\hat{X}_t$ has continuous paths, so in order to show it is a Brownian motion, it's enough to show that it has the right finite dimensional distributions. 

Consider any finite collection of times $0< t_1<t_2<\cdots <t_n<\infty$ and any sequence of Borel subsets of $M$, denoted $A_1,\ldots, A_n$. Because $\sigma_a$ is almost surely finite, a.e.\ path of $\hat{X}_t$ is given by a reflected path of $X_t$, with the reflection depending only on $\theta\lp X_{\sigma_a}\rp$ (which is almost surely well defined, as mentioned above). Thus, we can decompose the event that $\lp \hat{X}_{t_1},\ldots,\hat{X}_{t_n}\rp \in A_1\times\cdots\times A_n$, which we write as $\cap_i \lc \hat{X}_{t_i}\in A_i\rc$, by conditioning on $\theta\lp X_{\sigma_a}\rp$ as
\[
\Prob \lp \cap_i \lc \hat{X}_{t_i}\in A_i\rc \rp = \frac{1}{2\pi} \int_{\theta=0}^{2\pi} \Prob \lp \left. \cap_i \lc R_{(1,\theta)} X_{t_i}\in A_i\rc \, \right| \, 
\theta\lp X_{\sigma_a}\rp =\theta \rp \, d\theta ,
\]
using that $\theta\lp X_{\sigma_a}\rp$ is uniformly distributed on $[0,2\pi]$ by the above. Let $\Rot_{\theta}$ denote rotation around the origin by the angle $\theta$. By rotational invariance, the distribution of $X_t$ conditioned on the event $\theta\lp X_{\sigma_a}\rp=\theta$ is the same as the distribution of  $X_t$ conditioned on the event $\theta\lp X_{\sigma_a}\rp=0$ and then rotated by the angle $\theta$. It follows that
\[
\Prob \lp \cap_i \lc \hat{X}_{t_i}\in A_i\rc \rp = \frac{1}{2\pi} \int_{\theta=0}^{2\pi} \Prob \lp \left. \cap_i \lc  \Rot_{\theta} \lp R_{(1,0)} X_{t_i}\rp\in A_i\rc \, \right| \, 
\theta\lp X_{\sigma_a}\rp =0 \rp \, d\theta 
\]
using the usual relationship between reflection and rotation. Now $R_{(1,0)} X_{t}$ is also a Brownian motion, which we denote $X'_t$, and $\sigma_a$ for $X_t$ is equal to $\sigma_{-a}$ for $X'_t$. So we have, using the fact that $\sigma_{-a}$ is rotationally invariant and the distribution of $X'_t$ conditioned on the event $\theta\lp X'_{\sigma_{-a}}\rp=\theta$ is the same as the distribution of $X'_t$ conditioned on the event $\theta\lp X_{\sigma_{-a}}\rp=0$ and then rotated by the angle $\theta$ (and similar reasoning as above in the opposite order),
\[\begin{split}
\Prob \lp \cap_i \lc \hat{X}_{t_i}\in A_i\rc \rp &= \frac{1}{2\pi} \int_{\theta=0}^{2\pi} \Prob \lp \left. \cap_i \lc  \Rot_{\theta} X'_{t_i}\in A_i\rc \, \right| \, 
\theta\lp X'_{\sigma_{-a}}\rp =0 \rp \, d\theta \\
&= \frac{1}{2\pi} \int_{\theta=0}^{2\pi} \Prob \lp \left. \cap_i \lc  X'_{t_i}\in A_i\rc \, \right| \, 
\theta\lp X'_{\sigma_{-a}}\rp =\theta \rp \, d\theta .
\end{split}\]
We recognize this last integral on the right as the computation of $\Prob\lp \cap_i \lc  X'_{t_i}\in A_i\rc \rp$ by conditioning on $\theta\lp X'_{\sigma_{-a}}\rp$. Since $X'_t$ is a Brownian motion, it follows that $\hat{X}_t$ has the same finite-dimensional distributions as  a Brownian motion, and thus it is a Brownian motion (on $M$, of course).

Finally, $\tilde{X}_t$ is obtained by running a Brownian motion, namely $\hat{X}_t$ until a stopping time, specifically $\sigma_{-a}$ (meaning the first time the stochastic area associated to $\hat{X}_t$ hits the level $-a$, which is a stopping time for $\hat{X}_t$), and then concatenating with an independent Brownian motion, namely $X_{\sigma_a+s}$. (Since $X_{\sigma_a}$ is a fixed point of the reflection $R_{\lp r\lp X_{\sigma_a} \rp ,\theta\lp X_{\sigma_a}\rp\rp}$, we see that $X_{\sigma_a+s}$ starts from the correct point.) By the strong Markov property, it follows that $\tilde{X}_t$ is also a Brownian motion, on $M$, as desired.
\end{proof}

\begin{remark}\label{Rmk:Sigma}
When we treat $\operatorname{SL}(2)$ and $\operatorname{SU}(2)$, the vertical fiber will be given by $\bS^1$, rather than $\bR$. In particular, the stochastic area process $z_t$ will be understood modulo $4\pi$, giving the natural coordinate $z\in (-2\pi, 2\pi]$, with the endpoints identified. Then for $a\in (0,\pi]$, we will want to let $\sigma_a$ be the first time $z_t$ (starting from 0) hits $a$ or $a-2\pi$, with slight abuse of notation in continuing to use $\sigma_a$ for this hitting time. However, note that the proof of Lemma uses only that $\sigma_a$ is a stopping time, that it's a.s.\ finite, that $r\lp X_{\sigma_a}\rp$ is a.s.\ positive, and that $\theta\lp X_{\sigma_a} \rp$ is uniform. Thus the result and the proof apply, as written, to the case just described, when $\sigma_a$ is the first time $z_t$ hits $a$ or $a-2\pi$ (again, the rotational symmetry is clear).
\end{remark}

With this, we can formalize our construction of the vertical reflection coupling on $\bH$.

\begin{THM}\label{THM:HMax}
Consider two points in $\bH$ on the same vertical fiber with vertical separation $2a>0$; then there exist coordinates such that the points are $(0,0,0)$ and $(0,0,2a)$ (after possibly switching them). Let $B_t=(r_t,\theta_t,z_t)$ be a Heisenberg Brownian motion (expressed in cylindrical coordinates) started from $(0,0,0)$, and let $\sigma_a$ be the first hitting time of the set $\{z=a\}$, which is a.s.\ finite. Then if $\tilde{B}_t=(\tilde{r}_t,\tilde{\theta}_t,\tilde{z}_t)$ is the process defined by
\begin{equation}\label{Eqn:HeisCoupling}
\lp\tilde{r}_t,\tilde{\theta}_t,\tilde{z}_t\rp = \begin{cases}
\lp R_{\lp r_{\sigma_a} ,\theta_{\sigma_a}\rp} (r_t,\theta_t), 2a-z_t\rp  & \text{for $t\leq \sigma_a$} \\
(r_t,\theta_t,z_t)  & \text{for $t> \sigma_a$}
\end{cases}
\end{equation}
(where $R_{(r,\theta)}$ is the reflection of $\bR^2$ as in Lemma \ref{Lem:BM}), $\tilde{B}_t$ is a Heisenberg Brownian motion started from $(0,0,2a)$, coupled with $B_t$ such that $\sigma_a$ is their coupling time. Moreover, this coupling is maximal, and the coupling time $\sigma_a$ satisfies the reflection principle
\[
\Prob\lp \sigma_a> t\rp = 1-2 \Prob\lp z_t\geq a \rp .
\]
\end{THM}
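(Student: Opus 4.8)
The plan is to prove four assertions in turn: (i) $\tilde{B}_t$ is a Heisenberg Brownian motion from $(0,0,2a)$; (ii) $B_t$ and $\tilde{B}_t$ first coincide exactly at time $\sigma_a$; (iii) the stated reflection principle for $z_t$; and (iv) maximality of the coupling, which I would deduce from (iii) together with Aldous's inequality \eqref{Eqn:Aldous}. For (i), apply Lemma~\ref{Lem:BM} with $M=\bR^2$ to the horizontal component: since $z_0=0$, the hitting time $\sigma_a$ of $\{z=a\}$ is exactly the first time the stochastic area of $(r_t,\theta_t)$ reaches $a$, so the lemma gives that $(\tilde{r}_t,\tilde{\theta}_t)$ is an $\bR^2$-Brownian motion from the origin. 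It then remains to identify $(\tilde{r}_t,\tilde{\theta}_t,\tilde{z}_t)$ as the horizontal lift of that planar Brownian motion started from $(0,0,2a)$, which is a Heisenberg Brownian motion by the submersion structure recalled above; concretely, one checks that $\tilde{z}_t$ equals $2a$ plus the stochastic area of $(\tilde{r}_t,\tilde{\theta}_t)$, a short computation using that a planar reflection through a line through the origin reverses the sign of the enclosed signed area, that the stochastic area is additive along concatenations, and that $z_{\sigma_a}=a$. For (ii): $R_{(r_{\sigma_a},\theta_{\sigma_a})}$ fixes $(r_{\sigma_a},\theta_{\sigma_a})$ and $2a-z_{\sigma_a}=a=z_{\sigma_a}$, so $\tilde{B}_{\sigma_a}=B_{\sigma_a}$ and $\tilde{B}_t=B_t$ for all $t\geq\sigma_a$, while for $t<\sigma_a$ the vertical coordinates $z_t$ and $2a-z_t$ differ (they agree only when $z_t=a$); hence $\sigma_a$ is the coupling time.

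For (iii) I would condition on $\mathcal{F}_{\sigma_a}$ and use the strong Markov property. On $\{\sigma_a<t\}$, the increment $z_t-z_{\sigma_a}=z_t-a$ is the stochastic area accumulated over $[0,t-\sigma_a]$ by a planar Brownian motion restarted from $q:=(x_{\sigma_a},y_{\sigma_a})$. Reflecting that planar Brownian motion through the line joining the origin to $q$ --- an isometry of $\bR^2$ fixing $q$ --- leaves its law unchanged while negating this area, so conditionally on $\mathcal{F}_{\sigma_a}$ the quantity $z_t-a$ has a distribution symmetric about $0$; moreover it is non-atomic, being a centered Gaussian mixture with a.s.\ positive variance $\tfrac14\int_{\sigma_a}^{t}r_s^2\,ds$. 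Hence $\Prob(z_t\geq a\mid\mathcal{F}_{\sigma_a})=\tfrac12$ on $\{\sigma_a<t\}$, while on $\{\sigma_a>t\}$ one has $z_t<a$ surely; since $\sigma_a$ is itself non-atomic (a level-hitting time of the time-changed Brownian motion $z$), taking expectations gives $\Prob(z_t\geq a)=\tfrac12\Prob(\sigma_a\leq t)$, i.e.\ $\Prob(\sigma_a>t)=1-2\Prob(z_t\geq a)$.

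For (iv), Aldous's inequality already gives $\TVd(\Law(B_t),\Law(\tilde{B}_t))\leq\Prob(\sigma_a>t)$, so only the reverse inequality is needed. Let $\Psi\colon(x,y,z)\mapsto(x,-y,2a-z)$; this is an isometry of $\bH$ (left translation by $(0,0,2a)$ composed with the sub-Riemannian isometry $(x,y,z)\mapsto(x,-y,-z)$) with $\Psi(0,0,0)=(0,0,2a)$. Since $\tilde{B}_t$ is a Heisenberg Brownian motion from $(0,0,2a)$ by (i), we have $\Law(\tilde{B}_t)=\Psi_{*}\Law(B_t)$. Testing the total variation distance against $A=\{z<a\}$, whose $\Psi$-preimage is $\{z>a\}$, yields
\[
\TVd(\Law(B_t),\Law(\tilde{B}_t))\;\geq\;\Prob(z_t<a)-\Prob(z_t>a)\;=\;1-2\Prob(z_t\geq a)\;=\;\Prob(\sigma_a>t),
\]
using (iii) and $\Prob(z_t=a)=0$. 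The two inequalities force equality for every $t>0$, which is maximality; and the last display in the statement is exactly (iii).

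The main --- and essentially only --- obstacle is the identification in step (i): because the reflection $R_{(r_{\sigma_a},\theta_{\sigma_a})}$ depends on $\mathcal{F}_{\sigma_a}$ it is not forward-adapted, so the identity ``$\tilde{z}_t=2a-(\text{stochastic area of }(\tilde{r}_t,\tilde{\theta}_t))$'' cannot be obtained by naively pulling a random linear map out of an It\^o integral over $[0,\sigma_a]$. I would resolve this either pathwise --- realizing the stochastic area as the a.s.\ limit of the areas of dyadic piecewise-linear interpolations, on each of which a linear reflection acts by a sign change --- or, equivalently, by rerunning the conditioning-on-the-exit-angle argument of Lemma~\ref{Lem:BM} one level up in $\bH$, using the isometries $\Psi_\phi\colon(x,y,z)\mapsto(\rho_\phi(x,y),2a-z)$ with $\rho_\phi$ the reflection of $\bR^2$ through the line at angle $\phi$, for which $\tilde{B}_t=\Psi_{\theta_{\sigma_a}}(B_t)$ on $\{t\leq\sigma_a\}$. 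Everything else reduces to standard facts about planar Brownian motion, its stochastic area, and the strong Markov property.
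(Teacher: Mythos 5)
Your proposal is correct, and steps (i)--(iii) follow essentially the paper's route: Lemma \ref{Lem:BM} for the planar marginal, sign reversal of the L\'evy area under reflection for the lift, and the post-$\sigma_a$ symmetry of $z_t$ (via the strong Markov property at $\sigma_a$) for the reflection principle. The one genuine divergence is the lower bound in the maximality argument. The paper tests the set $U=\{z<a\}$ and evaluates $\Law\lp B_t\rp(U)-\Law\lp\tilde{B}_t\rp(U)$ by decomposing each term over $\{\sigma_a<t\}$ and $\{\sigma_a>t\}$ and using that after $\sigma_a$ both $z_t$ and $\tilde{z}_t$ are equally likely to lie above or below $a$ (see \eqref{Eqn:Max}). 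You instead observe that $\Law\lp\tilde{B}_t\rp$ is the pushforward of $\Law\lp B_t\rp$ under the global isometry $\Psi:(x,y,z)\mapsto(x,-y,2a-z)$, so that $\Prob\lp\tilde{z}_t<a\rp=\Prob\lp z_t>a\rp$, and the lower bound then reduces directly to the reflection principle already proved in (iii). This is a clean shortcut: it needs the post-$\sigma_a$ symmetry only for $z_t$, not separately for $\tilde{z}_t$, and it makes explicit the role of the global isometry that the introduction emphasizes. You are also more careful than the paper about the measurability subtlety in step (i) --- the reflection is $\mathcal{F}_{\sigma_a}$-measurable rather than deterministic, so the identity between $\tilde{z}_t$ and $2a$ minus the area of the reflected path must be justified pathwise or by conditioning on the reflection angle, exactly as you propose; the paper asserts this without comment, implicitly relying on the same conditioning already carried out in Lemma \ref{Lem:BM}.
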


Note that the origin is represented by the triple $(0,0,0)$ in either Cartesian or cylindrical coordinates (though in cylindrical coordinates, any value for $\theta$ would work, and we take 0 for convenience) and similarly for the point $(0,0,2a)$.

\begin{proof}
The proof is more or less collecting the geometric facts already described. Because $r_t$ is positive for all $t>0$ with probability 1, $r_{\sigma_a}>0$ a.s., and thus the reflection in \eqref{Eqn:HeisCoupling}, and also $\tilde{B}_t$, are well defined. It follows from Lemma \ref{Lem:BM} that $(\tilde{x}_t,\tilde{y}_t)$ is an $\bR^2$-Brownian motion. Further, using the fact that reflection the given reflection in $\bR^2$ reverses the sign of the L\'evy area and that $z_{\sigma_a}=2a-z_{\sigma_a}$, we see that $\tilde{z}_t$ as defined in \eqref{Eqn:HeisCoupling} is the L\'evy area associated to $(\tilde{x}_t,\tilde{y}_t)$ starting from $2a$, so that $\tilde{B}_t$ is a Heisenberg Brownian motion started from $(0,0,2a)$. It's clear from the construction that $B_t$ and $\tilde{B}_t$ meet at time $\sigma_a$ and not before (note that $z_t<a<\tilde{z}_t$ for $t<\sigma_a$). So $B_t$ and $\tilde{B}_t$ are coupled Heisenberg Brownian motions with coupling time $\sigma_a$.

Next, we claim that the evolution of $z_t$ is symmetric after $\sigma_a$; that is $z_{\sigma_a+s}-a$ and $-(z_{\sigma_a+s}-a)$ have the same distribution, for all $s>0$. This can be seen from the fact that the L\'evy area is reflection symmetric, but we also provide the following argument. Observe that $(r_t,z_t)$ is a diffusion in its own right, and in particular, satisfies the strong Markov property. Thus, for any Borel $A\subset \bR$, we have
\begin{equation}\label{Eqn:ZSymmetry}\begin{split}
\Prob\lp z_{\sigma_a+s} \in A \rp &= \E\lb\E\lb \Ind_{\{z_{\sigma_a+s} \in A\}} | r_{\sigma_a} \rb\rb \\
&=  \E\lb\Prob\lp  \lp a+ W_{\int_0^s \frac{1}{4} R^2_u\, du}\rp \in A \, \big|\, R_0 \rp\rb
\end{split}\end{equation}
where $W_t$ is a standard one-dimensional Brownian motion started from 0, $R_t$ is a two-dimensional Bessel process, independent of $W_t$, started from $R_0$, and this last expectation is understood with respect to the distribution of $R_0=r_{\sigma_a}$. But then it's clear that the probability on this last line is reflection symmetric, in the sense that
\[
\Prob\lp  \lp a+ W_{\int_0^s \frac{1}{4} R_u\, du}\rp \in A \rp =
\Prob\lp  \lp a+ W_{\int_0^s \frac{1}{4} R_u\, du}\rp \in a-A \rp .
\]
Then this symmetry is preserved after taking the outer expectation, and the desired symmetry of $z_t$ follows by simple algebra.

The reflection principle follows immediately from the symmetry of $z_t$ after $\sigma_a$, just as for one-dimensional Brownian motion. Finally, the argument for the maximality of the coupling is similar. That $\Prob\lp \sigma_a>t\rp$ is an upper bound for the total variation distance is immediate from the Aldous inequality. For the other direction, consider the set $U=\{z<a\}\subset \bH$. Note that $ \Law\lp B_t\rp(U) = \Prob\lp z_t <a \rp$, and similarly $ \Law\lp \tilde{B}_t\rp(U) = \Prob\lp \tilde{z}_t <a \rp$. By the reflection symmetry of $z_t$, after $\sigma_a$ the probability of $z_t>a$ and $z_t<a$ are each $\frac{1}{2}$, and the same is true for $\tilde{z}_t$. On the other hand, before $\sigma_a$, $z_t$ must be less than $a$ and $\tilde{z}_t$ must be greater than $a$. Finally, for each fixed $t$, the probability that either $z_t$ or $\tilde{z}_t$ equals $a$ is 0. This follows from the fact that $z_t$ is given by a time-changed Brownian motion with strictly increasing time-change or from the fact that $B_t$ has a density (by H\"ormander's theorem), and it implies that, for each fixed $t$, the probability that $\sigma_a=t$ is also 0. It follows that
\begin{equation}\label{Eqn:Max}\begin{split}
\Law\lp B_t\rp(U)- \Law\lp \tilde{B}_t\rp(U) &=  \Prob\lp z_t <a \rp- \Prob\lp \tilde{z}_t <a \rp  \\
& =  \Prob\lp z_t <a \text{ and } \sigma_a<t \rp +  \Prob\lp z_t <a \text{ and } \sigma_a>t \rp \\
& \qquad - \Prob\lp \tilde{z}_t <a \text{ and } \sigma_a<t \rp - \Prob\lp \tilde{z}_t <a \text{ and } \sigma_a>t \rp  \\
&=  \frac{1}{2} \Prob\lp \sigma_a<t \rp + \Prob\lp \sigma_a> t \rp
-  \frac{1}{2} \Prob\lp \sigma_a<t \rp  - 0  \\
& = \Prob\lp \sigma_a>t\rp .
\end{split}\end{equation}
This gives the matching lower bound for the total variation distance, completing the proof.
\end{proof}

In the present case, we can easily follow up on this qualitative result by computing $\Prob \lp \sigma_a>t\rp$. The point is that
the density of $z_t$ is explicitly known. For $\bR^2$-Brownian motion, started from the origin, the L\'evy area at time $t$, which is our $z_t$, has density $f_t(z)\, dz$, where 
\begin{equation}\label{Eqn:f_t}
f_t(z) = \frac{1}{t}\sech\lp \pi\frac{z}{t} \rp .
\end{equation}
It also satisfies the scaling relationship $f_t(z)=\frac{1}{t}f_1\lp \frac{z}{t}\rp$ and is symmetric around 0, which is consistent with reflection symmetry of $z_t$ just used in the preceding proof. Using this, we compute
\begin{equation}\begin{split}
\Prob\lp \sigma_a> t\rp &= 1-2 \int_a^{\infty} f_t(z)\, dz \\
&=  2 \int_0^{a/t} \sech\lp \pi u\rp\, du \\
&= \frac{4}{\pi}\arctan\lp\tanh\lp\frac{\pi}{2}\cdot\frac{a}{t}\rp\rp . \label{Eqn:HExact}
\end{split}\end{equation}
To understand the asymptotic rate of decay of the coupling time, we can integrate the bounds
\[
1-\frac{\pi^2 u^2}{2}< \sech\lp \pi u\rp < 1 ,
\]
 for $u>0$ to see that
\[
2\frac{a}{t}-\frac{\pi^2}{3}\lp \frac{a}{t} \rp^3<  \Prob\lp \sigma_a> t\rp < 2\frac{a}{t} .
\]
This establishes the following.
\begin{THM}\label{THM:HLimit}
In the same situation as Theorem \ref{THM:HMax}, the coupling time, and thus the total variation distance between $\Law\lp B_t\rp$ and $\Law( \tilde{B}_t )$ , which we know is equal to $\Prob\lp \sigma_a> t\rp$, is given exactly by \eqref{Eqn:HExact} and satisfies the asymptotics
\begin{equation}
\frac{ \Prob\lp \sigma_a> t\rp}{2\frac{a}{t}} \rightarrow 1 \quad \text{from below, as $t\rightarrow\infty$}.
\end{equation}
\end{THM}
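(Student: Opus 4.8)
The plan is to combine the reflection principle established in Theorem~\ref{THM:HMax} with the classical closed form for the distribution of the planar L\'evy area, and then read off the asymptotics by elementary bounds on $\sech$. Theorem~\ref{THM:HMax} gives $\Prob\lp\sigma_a>t\rp = 1 - 2\Prob\lp z_t\geq a\rp$, and I would take as input the fact (recorded in \eqref{Eqn:f_t}) that $z_t$, the L\'evy area of planar Brownian motion from the origin at time $t$, has density $f_t(z) = \frac{1}{t}\sech\lp\pi z/t\rp$. The first step is then the chain of identities
\[
\Prob\lp\sigma_a>t\rp = 1 - 2\int_a^{\infty} f_t(z)\,dz = \int_{-a}^{a} f_t(z)\,dz = 2\int_0^{a/t}\sech\lp\pi u\rp\,du,
\]
using that $f_t$ is a probability density symmetric about the origin and substituting $u=z/t$.

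The second step is to evaluate the last integral in closed form. The antiderivative $\int\sech\lp\pi u\rp\,du = \frac{2}{\pi}\arctan\lp\tanh\lp\pi u/2\rp\rp$ is elementary (it can be checked by differentiation, using $\cosh(\pi u)=\cosh^2(\pi u/2)+\sinh^2(\pi u/2)$), so evaluating between $0$ and $a/t$ produces exactly \eqref{Eqn:HExact}, which is the asserted exact formula for $\Prob\lp\sigma_a>t\rp$; by the maximality in Theorem~\ref{THM:HMax} this is also the total variation distance between $\Law\lp B_t\rp$ and $\Law\lp\tilde B_t\rp$.

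For the asymptotics, the third step is a sandwich argument. Using the elementary bounds $1 - \frac{\pi^2 u^2}{2} < \sech\lp\pi u\rp < 1$ valid for $u>0$, integration over $[0,a/t]$ gives
\[
2\frac{a}{t} - \frac{\pi^2}{3}\lp\frac{a}{t}\rp^3 < \Prob\lp\sigma_a>t\rp < 2\frac{a}{t},
\]
and dividing through by $2a/t$ yields $1 - \frac{\pi^2}{6}\lp a/t\rp^2 < \Prob\lp\sigma_a>t\rp/(2a/t) < 1$; letting $t\to\infty$ shows the ratio tends to $1$, and the left inequality shows it does so from below.

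There is no genuine obstacle here: the only substantive ingredient is the density formula \eqref{Eqn:f_t}, a classical fact about the L\'evy stochastic area which is quoted rather than reproved, and everything else is Theorem~\ref{THM:HMax} together with one-variable calculus. If one wanted full self-containedness, the single point deserving a line of justification is the lower bound $\sech\lp\pi u\rp > 1 - \pi^2 u^2/2$, which follows from the termwise comparison $\cosh x = \sum_n x^{2n}/(2n)! \leq \sum_n x^{2n}/(2^n n!) = e^{x^2/2}$ (since $(2n)!\geq 2^n n!$) and then $e^{-x^2/2}\geq 1 - x^2/2$.
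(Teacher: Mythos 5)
Your proposal is correct and follows essentially the same route as the paper: invoke the reflection principle from Theorem~\ref{THM:HMax}, use the density \eqref{Eqn:f_t} to reduce to $2\int_0^{a/t}\sech(\pi u)\,du$, evaluate in closed form, and sandwich via $1-\tfrac{\pi^2 u^2}{2}<\sech(\pi u)<1$. The only addition is your explicit verification of the lower bound on $\sech$, which the paper states without proof.
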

Not only do we have the sharp decay rate of  $\Prob\lp \sigma_a> t\rp$ in time, but also linear dependence on the vertical displacement, which allows us to prove a vertical gradient estimate.

\begin{THM}\label{THM:HVGrad}
Let $P_t=e^{\frac{t}{2}\Lap_{\sH}}$ be the heat semigroup on $\bH$ and consider $f\in L^{\infty}(\bH)$. Then at any point $(x,y,z)\in\bH$ and for ant time $t>0$, we have
\[
\lab Z P_t f (x,y,z) \rab = \lab \nabla_{V} P_t f(x,y,z) \rab \leq \frac{1}{t}\|f\|_{\infty} .
\]
\end{THM}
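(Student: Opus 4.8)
The plan is to obtain the vertical gradient bound directly from the maximal coupling in Theorem~\ref{THM:HMax} together with the Aldous inequality \eqref{Eqn:Aldous}. Fix $f\in L^\infty(\bH)$, a point $(x,y,z)\in\bH$, and a time $t>0$. By left-invariance of the Heisenberg Laplacian, the semigroup commutes with left translations, and since $Z=\partial_z$ generates the one-parameter subgroup of vertical translations (which acts by isometries), it suffices to estimate $\partial_s\big|_{s=0} P_t f(x,y,z+s)$; equivalently, after a left translation moving $(x,y,z)$ to the origin, it suffices to bound $\lab \partial_s\big|_{s=0} P_t f(0,0,s)\rab$.

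First I would write, for $s>0$ (say $s=2a$ with $a>0$),
\[
P_t f(0,0,2a) - P_t f(0,0,0) = \E\big[f(\tilde B_t)\big] - \E\big[f(B_t)\big],
\]
where $B_t,\tilde B_t$ are the coupled Brownian motions from $(0,0,0)$ and $(0,0,2a)$ given by \eqref{Eqn:HeisCoupling}. Since $\tilde B_t = B_t$ on the event $\{\sigma_a\leq t\}$, the difference is supported on $\{\sigma_a>t\}$, so
\[
\lab P_t f(0,0,2a) - P_t f(0,0,0)\rab \leq 2\|f\|_\infty \,\Prob(\sigma_a>t)
\;=\; \frac{8}{\pi}\|f\|_\infty \arctan\!\lp\tanh\lp\tfrac{\pi}{2}\cdot\tfrac{a}{t}\rp\rp,
\]
using the exact formula \eqref{Eqn:HExact} (or, more crudely, the bound $\Prob(\sigma_a>t)\leq 2a/t$ from the argument preceding Theorem~\ref{THM:HLimit}). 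Dividing by $s=2a$ and letting $a\da 0$, the right-hand side tends to $\frac{1}{t}\|f\|_\infty$ (since $\arctan(\tanh u)/u\to 1$ as $u\to 0$, or directly from $2a/t$), which gives the one-sided difference quotient bound; combining with the same estimate for $s<0$ (replace $a$ by $|a|$ and use that the roles of the two fibered points are symmetric) yields $\lab Z P_t f(0,0,0)\rab \leq \frac{1}{t}\|f\|_\infty$, and hence the claim at every point by the translation argument above.

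The one genuinely technical point is justifying the passage from the difference quotient to the derivative, i.e.\ that $P_t f$ is differentiable in the $z$-direction and that $\partial_s|_{s=0}P_t f(0,0,s) = \lim_{a\da 0}\frac{1}{2a}(P_t f(0,0,2a)-P_t f(0,0,0))$ with the stated bound. For this I would invoke hypoellipticity: by H\"ormander's theorem $\Delta_\sH$ is hypoelliptic, so $P_t f$ is smooth for $t>0$, and $ZP_t f$ is a bona fide smooth function; then the bound on $\lab ZP_t f\rab$ at a point follows from the mean value theorem applied to $a\mapsto P_tf(0,0,2a)$ together with the difference-quotient estimate above, taking $a\to 0$ (the $O(a^3)$ error term in \eqref{Eqn:HExact} vanishes in the limit). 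This is the main obstacle only in the sense of bookkeeping; the probabilistic heart of the argument — that the coupling difference is controlled by $\Prob(\sigma_a>t)$, which is \emph{exactly} the total variation distance by maximality — is immediate from Theorem~\ref{THM:HMax}. I would close by noting that the same scheme produces the gradient bounds in all the other cases of Theorem~\ref{THM:SummaryVert}, with $\Prob(\sigma_a>t)/(2a)$ replaced by the relevant estimate for each space $M$.
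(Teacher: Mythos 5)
Your overall strategy matches the paper's exactly: reduce to the origin by left-translation, invoke hypoellipticity to justify differentiability of $P_t f$, and bound the vertical difference quotient via the maximal coupling of Theorem~\ref{THM:HMax} and the tail bound on $\sigma_a$. The paper's own proof is just the compressed version of what you wrote.

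There is, however, an arithmetic slip in your last step. You correctly write
\[
\lab P_t f(0,0,2a) - P_t f(0,0,0)\rab \;\leq\; 2\|f\|_\infty \,\Prob(\sigma_a>t) \;=\; \frac{8}{\pi}\|f\|_\infty \arctan\!\lp\tanh\lp\tfrac{\pi}{2}\cdot\tfrac{a}{t}\rp\rp,
\]
with the factor $2$ coming from $|f(\tilde B_t)-f(B_t)|\leq 2\|f\|_\infty$ on $\{\sigma_a>t\}$, but then assert that dividing by $2a$ and sending $a\da 0$ yields $\tfrac{1}{t}\|f\|_\infty$. Since $\arctan(\tanh u)\sim u$ as $u\to 0$, the right-hand side divided by $2a$ tends to $\tfrac{2}{t}\|f\|_\infty$, not $\tfrac{1}{t}\|f\|_\infty$. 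Equivalently, $2\|f\|_\infty\cdot\Prob(\sigma_a>t)\leq 2\|f\|_\infty\cdot\tfrac{2a}{t}=\tfrac{4a}{t}\|f\|_\infty$, and dividing by $2a$ gives $\tfrac{2}{t}\|f\|_\infty$. The paper's displayed proof instead writes $\lab P_t f(0,0,2a)-P_tf(0,0,0)\rab\leq\|f\|_\infty\cdot\tfrac{2a}{t}$, silently dropping the factor $2$; your version is the more honest one, and the place to be careful is the final limit. Note that the factor $2/t$ is genuinely sharp: taking $f(z)=\sgn(z)$ (a bona fide element of $L^\infty(\bH)$) gives $P_tf(0,0,0)=0$ while $P_tf(0,0,2a)=1-2\Prob(z_t<-2a)\sim 4a f_t(0)=\tfrac{4a}{t}$, so $\lab ZP_t f(0,0,0)\rab=\tfrac{2}{t}$ with $\|f\|_\infty=1$. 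So either the theorem's constant should read $\tfrac{2}{t}$, or the bound should be phrased with $\tfrac12\operatorname{osc}(f)=\inf_c\|f-c\|_\infty$ in place of $\|f\|_\infty$ (using that $ZP_t f=ZP_t(f-c)$); your computation cannot be made to deliver $\tfrac{1}{t}\|f\|_\infty$ as stated.
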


\begin{proof}
By left translation, it's enough to consider the vertical gradient at the origin. That the vertical gradient exists for any positive time is a consequence of the hypoellipticity of the operator and smoothness of the heat kernel. Then we have
\[
\lab Z P_t f (0,0,0) \rab= \lab \lim_{a\searrow 0} \frac{P_t f (0,0,2a)-P_t f (0,0,0)}{2a} \rab \leq \frac{\|f\|_{\infty} \cdot 2\frac{a}{t}}{2a}
=\frac{1}{t}\|f\|_{\infty} .
\]
\end{proof}

\subsection{An efficient horizontal coupling}

For points not on the same fiber, by left translation and rotational invariance, they can be taken to be $(0,0,0)$ and $(h,0,v)$, for some $h>0$ and $v\in \bR$. Here we work again in Cartesian coordinates. We observe that $h$ is the horizontal displacement between the points while $v$ generalizes the previous notion of vertical displacement for points on the same fiber.

We first couple the $\bR^2$-marginals by reflection, and let $\sigma'_h$ be the coupling time for these marginals. While the distribution of $\sigma'_h$ is just given by the first passage time of a Brownian motion on $\bR$, we need the joint distribution of $\sigma'_h$ and the vertical displacement of the processes at $\sigma'_h$, which we denote by $2a(\sigma'_h)$, since this constitutes the random initial conditions for the vertical coupling. In particular, the second stage of the coupling is conditionally independent of the first given the vertical displacement when the $\bR^2$-marginals couple. Indeed, if we let $\tau$ be the coupling time for the two-stage coupling (which depends on the initial points $q$ and $\tilde{q}$ via the corresponding $h$ and $v$, even though we don't indicate this in the notation), then $\tau$ is equal in distribution to $\sigma'_h +\sigma_{a(\sigma'_h)}$, where $\sigma'_h$ and $\sigma_{a(\sigma'_h)}$ are conditionally independent, given $a(\sigma'_h)$.

Fortunately, \cite{BGM} showed that (recall that the horizontal stage of their coupling is the same as ours)
\[
\E\lb \frac{\lab 2a\lp \sigma'_h\rp \rab}{2t}\wedge 1 \rb \leq
C\lp \frac{h}{\sqrt{t}}+\frac{|v|}{t}\rp ,
\]
which, combined with the first passage distribution of Brownian motion, controls the distribution of $\sigma'_h$ and $a(\sigma'_h)$. From this and Theorem \ref{THM:HLimit}, one deduces that
\[
\Prob\lp \tau>t\rp \leq C\lp \frac{h}{\sqrt{t}} + \frac{|v|}{t} \rp \quad \text{for $t>\max\{h^2,2|v|\}$.}
\]
This gives an upper bound on the total variation distance, and the horizontal gradient bound of Theorem \ref{THM:HorSum} follows after noting that $\sqrt{h^2+|v|}$ is comparable to $\dist_{\sR}\lp (0,0,0), (h,0,v)\rp$. For more details as well as further applications of the coupling, see \cite{BGM}.

\section{Coupling on $\widetilde{\operatorname{SL}(2)}$}

We next treat the related model spaces $\operatorname{SL}(2)$ and its universal cover, $\widetilde{\operatorname{SL}(2)}$. As the nomenclature implies, it is natural to define them at the same time. While one might consider $\operatorname{SL}(2)$ to be more basic, it is actually $\widetilde{\operatorname{SL}(2)}$ which is more easily studied in our context, and so we start there.

\subsection{The geometry of $\widetilde{\operatorname{SL}(2)}$ and $\operatorname{SL}(2)$}\label{Sect:SLGeo}

The Lie group $\operatorname{SL}(2)=\operatorname{SL}(2,\bR)$ is the group of $2\times 2$ real matrices of determinant 1. Its Lie algebra $\mathfrak{sl}(2,\bR)$ consists of all $2\times 2$ matrices of trace $0$. A basis of $\mathfrak{sl}(2,\bR)$ is given by matrices
\begin{align*}
X=\frac{1}{2}\begin{pmatrix}
1 & 0 \\ 0 & -1
\end{pmatrix}, \quad Y=\frac{1}{2}\begin{pmatrix}
0 & 1 \\ 1 & 0
\end{pmatrix}, \text{ and}\quad Z=\frac{1}{2}\begin{pmatrix}
0 & 1 \\ -1 & 0
\end{pmatrix},
\end{align*}
for which the following relationships hold
\begin{align*}
[X,Y]=Z, \quad [Y,Z]=-X, \quad\text{and }[Z,X]=-Y.
\end{align*}
We denote by $\widetilde{X}$, $\widetilde{Y}$, and $ \widetilde{Z}$ the left-invariant vector fields on $\operatorname{SL}(2)$ corresponding to the matrices $X$, $Y$, and $Z$. Then $\operatorname{SL}(2)$ can be equipped with a natural sub-Riemannian structure $\left(\operatorname{SL}(2),\mathcal{H},\langle\cdot,\cdot\rangle_{\mathcal{H}}\right)$ where $\mathcal{H}_g=\operatorname{Span}\{\widetilde{X}(g),\widetilde{Y}(g)\}$ at any $g\in \operatorname{SL}(2)$ and $\{\widetilde{X},\widetilde{Y}\}$ forms an orthonormal frame for $\langle\cdot,\cdot\rangle_{\mathcal{H}}$. The sub-Laplacian on $\operatorname{SL}(2)$ has the form $\Delta_{\mathcal{H}}^{\operatorname{SL}(2)}=(\widetilde{X})^2+(\widetilde{Y})^2$, and the horizontal and vertical gradients are defined analogously to those of $\bH$.

We will work in cylindrical coordinates on both $\operatorname{SL}(2)$ and $\widetilde{\operatorname{SL}(2)}$. With a different normalization, this description of $\operatorname{SL}(2)$ and $\widetilde{\operatorname{SL}(2)}$ and their associated Brownian motions goes back at least to \cite{Bonnefont-Th} (see also \cite{Bonnefont-SL}). Here we follow the exposition in \cite{MagalieAdapted}, Section 2 of which can be consulted for details and proofs. We have
\begin{align*}
& \bR^{+}  \times [0,2\pi] \times (-2\pi,2\pi] \rightarrow SL(2)
\\
&
(r,\theta,z) \mapsto g=\operatorname{exp}\left((r\cos\theta)X+(r\sin\theta)Y\right)\operatorname{exp}(zZ)
\end{align*}
where $\operatorname{exp}:\mathfrak{sl}(2,\bR) \rightarrow \operatorname{SL}(2)$ is the exponential map. In cylindrical coordinates $(r,\theta,z)$, left-invariant vector fields $\widetilde{X},\widetilde{Y},\widetilde{Z}$ have the following expression
\begin{align*}
& \widetilde{X}=\cos(\theta+z)\partial_{r}-\sin(\theta+z)\left(\tanh \left(\frac{r}{2}\right) \partial_{z}+\frac{1}{2}\left(\frac{1}{\tanh \left(\frac{r}{2}\right)}-\tanh \left(\frac{r}{2}\right)\right) \partial_{\theta}\right),
\\
&
\widetilde{Y}=\sin(\theta+z)\partial_{r}+\cos(\theta+z)\left(\tanh \left(\frac{r}{2}\right) \partial_{z}+\frac{1}{2}\left(\frac{1}{\tanh \left(\frac{r}{2}\right)}-\tanh \left(\frac{r}{2}\right)\right) \partial_{\theta}\right),
\\
&
\widetilde{Z}=\partial_{z}
\end{align*}
and thus the sub-Laplacian $\Delta_{\mathcal{H}}^{\operatorname{SL}(2,\bR)}$ can be written as the following second-order differential operator 
\begin{align*}
\Delta_{\mathcal{H}}^{\operatorname{SL}(2)}=\partial^2_r+\coth(r)\partial_r+\frac{1}{\sinh^2 r}\partial_\theta^2
+\frac{1}{\cosh^2(\frac{r}{2})} \partial _{\theta}\partial_z +\tanh^2\lp\frac{r}{2}\rp \partial^2_z.
\end{align*}

The map $(r,\theta,z) \mapsto (r,\theta)$ gives a submersion onto the hyperbolic plane $\bH_+^2$, with $(r,\theta)$ giving (geodesic) polar coordinates on $\bH^2_+$. Any smooth curve in $\bH_+^2$ has a horizontal lift in $\operatorname{SL}(2)$, unique up to choosing the staring value of $z$, given by the signed area swept out by the curve relative to the origin, modulo $4\pi$. The length-minimizing geodesics are the lifts of curves in $\bH_+^2$ with constant geodesic curvature. (This is related to the isoperimetric problem on the hyperbolic plane and is the direct analogue of the fact that lifts of circles give geodesics for the Heisenberg group. See \cite[Section 5.3]{BauerFurutaniIwasaki2011} for more details.) Moreover, $\operatorname{SL}(2)$-Brownian motion, started from $(r_0,\theta_0,z_0)$ is given by $\bH_+^2$-Brownian motion, written in polar coordinates and started from $(r_0,\theta_0)$, with $z_t$ being the (signed) stochastic area process, started from $z_0$ and taken modulo $4\pi$, associated to the $\bH_+^2$-Brownian motion. In particular, $\operatorname{SL}(2)$-Brownian motion is given, in cylindrical coordinates, by the following system of SDEs,
\begin{equation}\label{Eqn:SLSDEs}
\begin{split}
dr_t &= dW_t^{(1)} +\frac{1}{2}\coth(r_t)\, dt \\
d\theta_t &= \frac{1}{\sinh(r_t)}\, dW_t^{(2)} \\
dz_t&=  \tanh\lp\frac{r_t }{2}\rp\, dW_t^{(2)} \, \mod 4\pi
\end{split}
\end{equation}
where $W_t^{(1)}$ and $W_t^{(2)}$ are independent one-dimensional Brownian motions. Further, we see that $z_t$ can be written as
\begin{equation}\label{Eqn:SLTC}
z_t = z_0+W_{\int_0^t \tanh^2\lp\frac{r_s}{2}\rp \, ds}  \mod 4\pi
\end{equation}
where $W_t$ is a one-dimensional Brownian motion independent of $r_t$ (and $r_t$ is the radial process on the hyperbolic plane).

It's clear from the cylindrical coordinates and submersion structure that $\operatorname{SL}(2)$ is diffeomorphic to $\bH_+^2\times\bS^1$ (and thus also to $\bR^2\times\bS^1$) and, in particular, is not simply connected. The universal cover $\widetilde{\operatorname{SL}(2)}$ is obtained by taking the universal cover of each vertical fiber, so that $\widetilde{\operatorname{SL}(2)}$ is diffeomorphic to $\bH_+^2\times \bR$. The cylindrical coordinates $(r,\theta,z)$ extend in the obvious way, now with values $r\in \bR^{+}$, $\theta\in [0,2\pi)$, and  $z\in \bR$. The sub-Riemannian structure is inherited from that of $\operatorname{SL}(2)$, and the expressions for $\widetilde{X}$, $\widetilde{Y}$, and $\widetilde{Z}$ in cylindrical coordinates remain the same, as does $\Delta_{\sH}$. While $\widetilde{\operatorname{SL}(2)}$ is not a matrix group, it is still a Lie group with the same Lie algebra, and the sub-Riemannian structure is left invariant.

The submersion map is essentially the same as for $\operatorname{SL}(2)$, and the horizontal lift of a curve in $\bH^2_+$ is again given by the signed area swept out relative to the origin, only now it is left real-valued, rather than taken modulo $4\pi$. Similarly, $\widetilde{\operatorname{SL}(2)}$-Brownian motion is again given by \eqref{Eqn:SLSDEs} and \eqref{Eqn:SLTC}, except without the ``modulo $4\pi$'' for $z_t$.

\begin{remark}\label{Rmk:Norm}
We choose our basis ${X,Y,Z}$ for the Lie algebra with the ``$\frac{1}{2}$'s'' in front of the matrices in order to make the submersion be onto $\bH^2_+$ with the standard metric of curvature $-1$. Other authors omit the factors of $\frac{1}{2}$, which makes some formulas simpler, but also rescales the metric so that the submersion gives $\bH^2_+$ the metric of curvature $-4$. In a similar vein, we note that one can rescale the vertical axis so that 
\[
dz_t =  x_t \,dy_t - y_t\, dx_t = r_t\,dW_t ,
\]
which is perhaps cleaner, but also means that the $z$-coordinate is twice the classical L\'evy area. Because our approach to constructing the vertical coupling is based on the submersion geometry in which the sub-Riemannian structure is given by a Riemannian space form augmented with the stochastic area functional, we choose our normalizations in order to make the space forms and their area functionals as geometrically natural as possible. 
\end{remark}

\subsection{Vertical coupling on  $\widetilde{\operatorname{SL}(2)}$}

Suppose we have two points in $\widetilde{\operatorname{SL}(2)}$ on the same vertical fiber with respect to the submersion. Without loss of generality (by left invariance and possibly exchanging the two points), we can assume that the points are $(0,0,0)$ and $(0,0,2a)$ for some $a>0$. 

\begin{THM}\label{THM:SLUMax}
Consider two points in $\widetilde{\operatorname{SL}(2)}$ on the same vertical fiber with vertical displacement $2a>0$; then there exist cylindrical coordinates such that the points are $(0,0,0)$ and $(0,0,2a)$ (after possibly switching them). Let $B_t=(r_t,\theta_t,z_t)$ be an $\widetilde{\operatorname{SL}(2)}$-Brownian motion (expressed in cylindrical coordinates) started from $(0,0,0)$, and let $\sigma_a$ be the first hitting time of the set $\{z=a\}$, which is a.s.\ finite. Then if $\tilde{B}_t=(\tilde{r}_t,\tilde{\theta}_t,\tilde{z}_t)$ is the process defined by
\[
\lp\tilde{r}_t,\tilde{\theta}_t,\tilde{z}_t\rp = \begin{cases}
\lp R_{\lp r_{\sigma_a} ,\theta_{\sigma_a}\rp} (r_t,\theta_t), 2a-z_t\rp  & \text{for $t\leq \sigma_a$} \\
(r_t,\theta_t,z_t)  & \text{for $t> \sigma_a$}
\end{cases}
\]
(where $R_{(r,\theta)}$ is the reflection of $\bH_+^2$ as in Lemma \ref{Lem:BM}), $\tilde{B}_t$ is an $\widetilde{\operatorname{SL}(2)}$-Brownian motion started from $(0,0,2a)$, coupled with $B_t$ such that $\sigma_a$ is their coupling time. Moreover, this coupling is maximal, and the coupling time $\sigma_a$ satisfies the reflection principle
\[
\Prob\lp \sigma_a> t\rp = 1-2 \Prob\lp z_t\geq a \rp .
\]
\end{THM}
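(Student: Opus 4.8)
The plan is to follow the proof of Theorem~\ref{THM:HMax} essentially verbatim, since the only geometric input that changes is that the base of the submersion is now the hyperbolic plane $\bH^2_+$ in place of $\bR^2$, and that case is already covered by Lemma~\ref{Lem:BM}. First I would record the well-definedness points. The hyperbolic radial process started at the origin is positive for all $t>0$ almost surely (it behaves like a two-dimensional Bessel process near the origin), so $\lp r_{\sigma_a},\theta_{\sigma_a}\rp$ determines a unique complete geodesic of $\bH^2_+$ through the origin, hence a reflection $R_{\lp r_{\sigma_a},\theta_{\sigma_a}\rp}$, and therefore $\tilde B_t$ is well defined. Finiteness of $\sigma_a$ is part of Lemma~\ref{Lem:BM}; it is also visible from \eqref{Eqn:SLTC}, since $r_s\to\infty$ forces $\tanh^2(r_s/2)\to 1$, so the clock $\int_0^t\tanh^2(r_s/2)\,ds$ diverges and the driving one-dimensional Brownian motion, hence $z_t$, hits every level.

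Next I would invoke Lemma~\ref{Lem:BM} with $M=\bH^2_+$ to conclude that $\lp\tilde r_t,\tilde\theta_t\rp$ is a Brownian motion on $\bH^2_+$ from the origin. Since $R_{\lp r_{\sigma_a},\theta_{\sigma_a}\rp}$ is an orientation-reversing isometry fixing the origin, it reverses the sign of the stochastic area swept relative to the origin; combined with $z_{\sigma_a}=a=2a-z_{\sigma_a}$, this shows that $\tilde z_t$ as defined is exactly the (real-valued, since the fiber of $\widetilde{\operatorname{SL}(2)}$ is a line and the ``modulo $4\pi$'' of \eqref{Eqn:SLTC} is absent) stochastic area process associated to $\lp\tilde r_t,\tilde\theta_t\rp$ and started from $2a$. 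Hence $\tilde B_t$ is an $\widetilde{\operatorname{SL}(2)}$-Brownian motion from $(0,0,2a)$ by the description of $\widetilde{\operatorname{SL}(2)}$-Brownian motion in Section~\ref{Sect:SLGeo}. It couples with $B_t$ exactly at $\sigma_a$: before that time $z_t<a<\tilde z_t$, so the processes are distinct, and afterwards they agree by construction (note $R_{\lp r_{\sigma_a},\theta_{\sigma_a}\rp}$ fixes the projection of $B_{\sigma_a}$ and $\tilde z_{\sigma_a}=a=z_{\sigma_a}$).

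For the reflection principle I would first establish the symmetry of $z_t$ about $a$ for times after $\sigma_a$. The pair $(r_t,z_t)$ solves an autonomous system (the first and third lines of \eqref{Eqn:SLSDEs}), hence is a diffusion with the strong Markov property, so for any Borel $A\subset\bR$,
\[
\Prob\lp z_{\sigma_a+s}\in A\rp
= \E\lb \Prob\lp \lp a+W_{\int_0^s\tanh^2(R_u/2)\,du}\rp\in A \,\big|\, R_0\rp\rb,
\]
where $W$ is a standard one-dimensional Brownian motion from $0$, $R$ a hyperbolic radial process from $R_0=r_{\sigma_a}$, and the outer expectation is over $r_{\sigma_a}$. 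The inner probability is invariant under $A\mapsto a-A$, and this survives the outer expectation; this is the exact analogue of \eqref{Eqn:ZSymmetry}. The reflection principle $\Prob(\sigma_a>t)=1-2\Prob(z_t\ge a)$ then follows as for one-dimensional Brownian motion, using also that $\Prob(z_t=a)=0$ for each fixed $t$ (because $z_t$ is a time change of Brownian motion with strictly increasing clock, as $\tanh^2(r_s/2)>0$ for $s>0$; alternatively, because $B_t$ has a smooth density by H\"ormander's theorem), which in turn gives $\Prob(\sigma_a=t)=0$.

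Finally, maximality is obtained exactly as in Theorem~\ref{THM:HMax}: the Aldous inequality \eqref{Eqn:Aldous} gives $\TVd\lp\Law\lp B_t\rp,\Law\lp\tilde B_t\rp\rp\le\Prob(\sigma_a>t)$, and testing against the set $U=\{z<a\}\subset\widetilde{\operatorname{SL}(2)}$ and repeating the computation \eqref{Eqn:Max} verbatim — with the symmetry of $z_t$ just established in place of the symmetry of the L\'evy area — yields $\Law\lp B_t\rp(U)-\Law\lp\tilde B_t\rp(U)=\Prob(\sigma_a>t)$, the matching lower bound. I do not anticipate a genuine obstacle: the heavy lifting is done by Lemma~\ref{Lem:BM}, and the only point demanding any care is precisely that the vertical fiber of $\widetilde{\operatorname{SL}(2)}$ is a line, so that the stochastic area, the hitting time $\sigma_a$, and the reflection all behave exactly as on the Heisenberg group — in contrast to $\operatorname{SL}(2)$ and $\operatorname{SU}(2)$, where the compact fiber requires a different argument.
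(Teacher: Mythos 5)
Your proposal is correct and follows essentially the same route as the paper's own proof: reduce everything to Lemma \ref{Lem:BM} with $M=\bH^2_+$, note that transience of hyperbolic Brownian motion and the time-change representation \eqref{Eqn:SLTC} (without the modulus) give finiteness of $\sigma_a$ and well-definedness of the reflection, establish the post-$\sigma_a$ symmetry of $z_t$ via the analogue of \eqref{Eqn:ZSymmetry}, and deduce the reflection principle and maximality by repeating \eqref{Eqn:Max} with $U=\{z<a\}$. No gaps; the level of detail you supply (e.g.\ the explicit conditioning identity and the $\Prob(z_t=a)=0$ remark) is if anything slightly fuller than the paper's sketch.
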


\begin{proof}
In light of the discussion of cylindrical coordinates on $\widetilde{\operatorname{SL}(2)}$, the proof is almost identical to that of Theorem \ref{THM:HMax}, so we only briefly describe the argument and necessary adjustments. It's well known that Brownian motion on $\bH_+^2$ is transient, so from \eqref{Eqn:SLTC} (which is valid here without the ``modulo $4\pi$''), we see (more concretely than before) that $\sigma_a$ is a.s.\ finite. Almost surely, $r_t$ never hits 0 for positive $t$ here as well, so the reflection, and also $\tilde{B}_t$, are well defined. From Lemma \ref{Lem:BM} and the fact that reflection in $\bH^2_+$ changes the sign of the stochastic area, we see just as before that $\tilde{B}_t$ is an $\widetilde{\operatorname{SL}(2)}$-Brownian motion started from $(0,0,2a)$ and that the processes meet exactly at $\sigma_a$.

That $z_t$ is symmetric after $\sigma_a$ follows from the obvious analogue of \eqref{Eqn:ZSymmetry}, and the reflection principle follows. If we let $U={z<a}\subset \widetilde{\operatorname{SL}(2)}$, we again have that, for any fixed $t$, the probability that either $z_t$ or $\tilde{z}_t$ equals $a$ is 0, and then the notation is such that \eqref{Eqn:Max} holds as written (just understood as applying in the present context). Maximality of the coupling follows, and the theorem is proven.
\end{proof}

Just as for $\bH$, the natural next step is to understand the distribution of $\sigma_a$. Unlike the case of $\bH$, where the density of $z_t$ was known, for $\widetilde{\operatorname{SL}(2)}$, there is no similar result. The stochastic area process on the hyperbolic plane has not been especially well studied. The most relevant results appear to be those of \cite{FabriceWangArea}, where they show that
\[
\frac{z_t}{\sqrt{t}}\rightarrow N(0,1) \quad\text{in distribution as $t\rightarrow \infty$}.
\]
Thus, if $F_t(x)$ is the cdf of $\frac{z_t}{\sqrt{t}}$, we have that $F_t(x)\rightarrow \Phi(x)$ uniformly in $x$, where $\Phi$ is the cdf of the standard normal, as usual. However, in order to use this in the reflection principle, we need more than this. In particular, we need an explicit estimate of the rate of convergence, which we now establish.

\begin{THM}\label{THM:Ft-SL2}
Let $F_t(x)$ be the cdf of $\frac{z_t}{\sqrt{t}}$, with $z_t$ the obvious component of an $\widetilde{\operatorname{SL}(2)}$-Brownian motion started from the origin. Then there exist positive constants $c$ and $T_0$ such that 
\[
\Phi(x)\leq F_t(x)\leq  \Phi(x) + cx \quad \text{for all $x\geq 0$ and $t>T_0$.}
\]
\end{THM}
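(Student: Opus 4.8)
The plan is to use the time-change representation \eqref{Eqn:SLTC}, namely $z_t = W_{A_t}$ with $A_t = \int_0^t \tanh^2(r_s/2)\,ds$ and $W$ a Brownian motion independent of the radial process $r_s$. Since $0 \le \tanh^2(r_s/2) \le 1$, we have $A_t \le t$ deterministically, which already gives one of the two inequalities: conditionally on the whole path $A$, the variable $z_t/\sqrt{t} = W_{A_t}/\sqrt{t}$ is centered Gaussian with variance $A_t/t \le 1$, so for $x \ge 0$ its cdf at $x$ is at least $\Phi(x)$ (a Gaussian with smaller variance puts more mass to the left of a nonnegative point). Averaging over $A$ preserves this, giving $F_t(x) \ge \Phi(x)$ for all $x\ge 0$ and all $t$. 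This is the easy half and requires no asymptotics.

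For the upper bound, I would condition on $A_t$ and write, for $x \ge 0$,
\[
F_t(x) = \E\!\left[\Phi\!\left(\frac{x}{\sqrt{A_t/t}}\right)\right] = \E\!\left[\Phi\!\left(\frac{x\sqrt{t}}{\sqrt{A_t}}\right)\right].
\]
Since $A_t \le t$, the argument $x\sqrt{t}/\sqrt{A_t}$ is at least $x$, so $\Phi(\cdot)$ applied to it is at least $\Phi(x)$; the excess $F_t(x) - \Phi(x)$ is controlled by how much $A_t/t$ falls below $1$, i.e. by $\E[1 - A_t/t] = \frac{1}{t}\E\!\int_0^t \sech^2(r_s/2)\,ds$. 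The key input is that Brownian motion on $\bH^2_+$ is transient with $r_s \to \infty$ at linear speed (indeed $r_s/s \to 1/2$ a.s.), so $\sech^2(r_s/2)$ decays exponentially in $s$ and $\E[\sech^2(r_s/2)]$ is integrable in $s$; hence $\E\!\int_0^\infty \sech^2(r_s/2)\,ds =: \kappa < \infty$, and $\E[1 - A_t/t] \le \kappa/t$. Now I would split into the regimes $x\sqrt{t}$ small versus large. Using the elementary Lipschitz-type bound $\Phi(y') - \Phi(y) \le \frac{1}{\sqrt{2\pi}}\,\frac{y'-y}{y}$ for $0 < y \le y'$ (valid since $\Phi' $ is decreasing on the relevant range once $y \ge$ some threshold, with a separate crude bound $\le \frac{1}{\sqrt{2\pi}}(y'-y)$ near $0$), together with $y'/y = \sqrt{t/A_t}$ and the concentration of $A_t/t$ near $1$, one bounds $\Phi(x\sqrt{t/A_t}) - \Phi(x)$ pointwise and then takes expectations. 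The terms where $A_t/t$ is atypically small (say below $1/2$) are handled by the trivial bound $\Phi(\cdot) - \Phi(x) \le 1$ times $\Prob(A_t \le t/2) \le 2\kappa/t$ by Markov, which for $t > T_0$ is dominated by $cx$ only if $x$ is bounded below — so instead I would absorb this into a uniform $O(1/t)$ term and note that for $t > T_0$ large enough one has $1/t \le $ (small constant), then re-examine: actually the cleanest route is to prove $F_t(x) - \Phi(x) \le c' x \,\E[1 - A_t/t] + (\text{exponentially small in } x^2 t)$ and observe the second term is itself $\le c'' x$ for all $x \ge 0$. Combining, $F_t(x) \le \Phi(x) + cx$ with $c$ coming from $\kappa$ and the Gaussian constants.

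The main obstacle is making the upper-bound argument uniform in $x \ge 0$: the naive Lipschitz estimate for $\Phi(y') - \Phi(y)$ blows up as $y \to 0^+$, precisely the regime $x\sqrt{t}$ small, so one cannot simply pull the factor $x$ out everywhere. The resolution is that when $x$ is small the difference $\Phi(y') - \Phi(y)$ is itself small (bounded by $\frac{1}{\sqrt{2\pi}}(y' - y) = \frac{x}{\sqrt{2\pi}}(\sqrt{t/A_t} - 1)$, which is linear in $x$ with no blow-up), and when $x$ is large the Mills-ratio decay of $\Phi$ makes the difference exponentially small; the two regimes must be matched at a threshold like $x \asymp 1/\sqrt{t}$, and one needs the estimate $\E[\sqrt{t/A_t} - 1] = O(1/t)$, which follows from $A_t \ge A_\infty' $ bounded below combined with $\E[t - A_t] = O(1)$ and a Cauchy–Schwarz or direct argument controlling $\E[(t-A_t)/A_t]$ using that $A_t \ge c_0 t$ with overwhelming probability (again from transience of hyperbolic Brownian motion). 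Verifying the integrability $\kappa < \infty$ rigorously — i.e. the exponential decay of $\E[\sech^2(r_s/2)]$ — uses the known comparison of the hyperbolic radial process with a drifting Brownian motion and is where I would cite \cite{FabriceWangArea} or standard hyperbolic-Brownian-motion estimates.
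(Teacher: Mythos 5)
Your lower-bound argument is fine and essentially equivalent to the paper's: writing $z_t = W_{S(t)}$ with $S(t) = \int_0^t\tanh^2(r_s/2)\,ds\leq t$, conditioning on $S(t)$, and using that a centered Gaussian with variance $\leq 1$ has cdf $\geq\Phi$ at nonnegative points. The paper obtains the same thing as the nonnegativity of a term in an integration-by-parts identity, but the content is the same.

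The upper bound is where there is a genuine gap. You correctly identify that the issue is making the estimate uniform in $x\geq 0$, and that the whole game is controlling $\E\bigl[\sqrt{t/S(t)}-1\bigr]$ (so that the factor $x$ can be pulled out of the Lipschitz bound $\Phi(x\sqrt{t/S(t)})-\Phi(x)\leq\frac{1}{\sqrt{2\pi}}\,x\bigl(\sqrt{t/S(t)}-1\bigr)$). But the tools you propose do not reach this. Markov's inequality from $\E[t-S(t)]\leq\kappa$ only gives $\Prob\lp S(t)\leq s\rp\leq\kappa/(t-s)$, which near $s=0$ is just $O(1/t)$ and carries no information about how small $S(t)$ can be; this is far from enough to conclude $\E\bigl[S(t)^{-1/2}\bigr]<\infty$, let alone that $\sqrt{t}\,\E\bigl[S(t)^{-1/2}\bigr]=O(1)$. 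Your alternative suggestion ("Cauchy--Schwarz controlling $\E[(t-S(t))/S(t)]$ using $S(t)\geq c_0 t$ with overwhelming probability") still requires some a priori control of negative moments of $S(t)$, which is a nontrivial small-ball estimate for $\int_0^1 r_s^2\,ds$ that you neither state nor cite; and the plain Markov bound you write down does not give the "overwhelming probability" you invoke. The claimed estimate $\E\bigl[\sqrt{t/S(t)}-1\bigr]=O(1/t)$ is not justified by anything you have at hand.

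What the paper does at precisely this point, and what you are missing, is a quantitative left-tail estimate: using the logarithmic singularity $-\frac{1}{2\pi}\log r$ of the hyperbolic Green's function, one shows that the expected total time $r_t$ spends below level $\eps$ is $\sim -\frac{1}{2}\eps^2\log\eps$, and hence (with Markov in $u$, not in $t-S$ directly) that $G_t(s):=\Prob\lp S(t)\leq s\rp\leq C\,s(-\log s)/t$ for small $s$ and $t$ large. This, together with a medium-range bound $G_t(s)\leq c/(t-2s)$ and the trivial $G_t\leq 1$, feeds into an integration by parts
\[
F_t(x)=\Phi(x)+\tfrac{1}{2}x\sqrt{t}\int_0^t s^{-3/2}\phi\lp x\sqrt{t}/\sqrt{s}\rp G_t(s)\,ds\leq\Phi(x)+\frac{x\sqrt{t}}{2\sqrt{2\pi}}\int_0^t s^{-3/2}G_t(s)\,ds,
\]
and the three-regime estimate then gives $\int_0^t s^{-3/2}G_t(s)\,ds=O(1/\sqrt t)$, hence $F_t(x)\leq\Phi(x)+cx$. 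Note that the $s^{-3/2}$ weight is not integrable at $0$ on its own; the factor $s(-\log s)$ in the small-$s$ tail bound is exactly what makes the integral converge. Citing $\kappa<\infty$ or exponential decay of $\E[\sech^2(r_s/2)]$ (which you offer as the thing to look up) does not supply this: those statements control the \emph{right} tail of $t-S(t)$, not the \emph{left} tail of $S(t)$ near zero, and it is the latter that is the crux. Without a small-ball bound of this kind, your argument does not close.
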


\begin{proof}
Naturally, we use cylindrical coordinates, and we write $z_t$ as
\[
W_{\int_0^t \tanh^2\lp \frac{r_s}{2}\rp\, ds}
\]
where $W_s$ is a Brownian motion (started at 0) independent of $r_t$. Thus, if we let $S(t)=\int_0^t \tanh^2\lp \frac{r_{\tau}}{2}\rp\, d\tau$, we see that $S(t)$ is a continuous, strictly increasing process that gives the time change that governs $z_t$. Conditioned on the value of $S(t)$, $z_t$ will be normal with mean 0 and variance $s$, so that 
\[
\Prob\lp \frac{z_t}{\sqrt{t}}\leq x \, \middle| \, S(t)=s  \rp =  \Phi\lp \frac{x\sqrt{t}}{\sqrt{s}} \rp .
\]
If, for each $t>0$, we let $G_t(s)$ be the cdf of $S(t)$, we then see that
\begin{equation}\label{Eqn:Ft}
F_t(x) =\int_0^{\infty} \Phi\lp \frac{x\sqrt{t}}{\sqrt{s}} \rp \, dG_t(s) .
\end{equation}

Next, we estimate $G_t(s)$. Recalling that $r_t$ is the distance from (a point designated as) the origin in the hyperbolic plane of a hyperbolic Brownian motion started from the origin, and the basic properties of $\tanh x$, we see that $\tanh r_t$ is a.s.\ strictly between 0 and 1 for all $t>0$. Thus, for any fixed $t>0$, we have $G_t(0)=0$ and $G_t(t)=1$, and moreover, the probability measure $dG_t$ assigns no mass to the interval $s\in[t,\infty)$. To understand the behavior of $G_t$ near $s=0$, we recall that Brownian motion on the hyperbolic plane is transient, so that the associated Green's function (which gives the total occupation density of Brownian motion) is locally integrable. Moreover, Green's function has a logarithmic singularity at the origin of the form $-\frac{1}{2\pi} \log r$. In polar normal coordinates, the hyperbolic area element is asymptotic to the Euclidean one, so, for small $\eps$, we can bound the total time $r_t$ spends below the level $\eps$ as
\[\begin{split}
\E\lb \int_0^{\infty} \mathbf{1}_{r_t<\eps} \,dt \rb &=
\int_0^{2\pi}\int_0^{\eps} \lp -\frac{1}{2\pi} \log r +O(1)\rp\lp r+O\lp r^3\rp  \rp \, dr\, d\theta \\
&= -\frac{1}{2}\eps^2\log \eps + O\lp  \eps^2  \rp .
\end{split}\]
Since $\tanh^2 x$ is comparable to $x^2$ near $x=0$, it follows that, for small enough $\eps'>0$, there is a constant $C$ such that 
\[
\E\lb \int_0^{\infty} \mathbf{1}_{\{\tanh^2 \frac{r_t}{2}<\eps\}} \,dt \rb < C\cdot\eps\lp -\log \eps\rp ,
\]
for all $\eps\in(0,\eps']$ (which certainly means that $\eps'<1$), and then by Markov's inequality,
\[
\Prob \lp \int_0^{\infty} \mathbf{1}_{\{\tanh^2 \frac{r_t}{2}<\eps\}}dt >u \, \rp< C\cdot \frac{ \eps\lp -\log \eps\rp}{u}, 
\]
where again the constant $C$ is independent of $\eps\in(0,\eps']$. We now fix some small, positive $\eps'$ so that the above asymptotic analysis holds, and we denote the event on the left-hand side by $B(u,\eps)$.

On the complement of $B(u,\eps)$, we have that, for any terminal time $T$, $\tanh^2 \frac{r_t}{2}\geq \eps$ for time at least $T-u$ (on the interval $t\in[0,T]$), meaning that $S(T)> \eps(T-u)$ on $B^c(u,\eps)$. It follows that if $\eps(T-u) \geq s$, the only contribution to $G_T(s)$ comes from $B(u,\eps)$. In particular, for a fixed $T>0$, we have that, for any $s>0$,
\[
G_T(s) <C\cdot \frac{ \eps\lp -\log \eps\rp}{u}
\]
for all pairs $\eps$ and $u$ such that $\eps\in (0,\eps']$, $u\in (0,T)$, and $s\leq \eps(T-u)$. So for $s\in (0,\eps']$, as long as $T>2$, we can take $\eps=s$ and $u=\frac{T}{2}$ so that
\begin{equation}\label{Eqn:SmallS}
G_T(s) <C\cdot \frac{s\lp -\log s\rp}{T} \quad\text{for all $s\in (0,\eps']$ and $T>2$,}
\end{equation}
where the constant $C$ depends only on $\eps'$.

Continuing, for $s>\eps'$, we have a similar estimate. The expected amount of time $\tanh^2 \frac{r_t}{2}$ spends below $\frac{1}{2}$ is finite, simply because the Green's function is locally integrable and $\tanh r\nearrow 1$ as $r\rightarrow\infty$ so that the corresponding sub-level set is bounded. Let this expected amount of time be $2c$. So again by Markov's inequality,
\[
\Prob \lp \int_0^{\infty} \mathbf{1}_{\{\tanh^2 \frac{r_t}{2}<\frac{1}{2}\}} \, dt >u\rp< \frac{2c}{u}, 
\]
for some constant $c>0$. If we call the event on the left-hand side of the above $C(u)$, then on the complement of $C(u)$, as before, we have that $S(T)> \frac{1}{2}(T-u)$.  For any $s<\frac{1}{2} T$, this will be the case with $u=2(T-2s)$, so we have
\begin{equation}\label{Eqn:MediumS}
G_T(s) < \frac{c}{T-2s} \quad\text{for all $s<\frac{1}{2}T$} .
\end{equation} Finally, we recall the basic fact that 
\begin{equation}\label{Eqn:LargeS}
G_T(s) \leq 1 \quad\text{for all $s$ and $T$} .
\end{equation}

Returning to \eqref{Eqn:Ft}, we can restrict the integral to $s\in[0,t]$, since $dG_t$ assigns no mass to $[t,\infty)$. Let 
\[
\phi(x) = \Phi'(x) = \frac{1}{\sqrt{2\pi}} e^{-\frac{1}{2}x^2}
\]
be the density of a standard normal. Assuming that $t>2$, we use integration by parts to write
\[\begin{split}
F_t(x) &= \lim_{a\searrow 0} \int_a^t \Phi\lp \frac{x\sqrt{t}}{\sqrt{s}} \rp \, dG_t(s) \\
&= \lim_{a\searrow 0}\lb  \int_a^t \frac{1}{2}\frac{x\sqrt{t}}{s^{3/2}} \phi\lp \frac{x\sqrt{t}}{\sqrt{s}}\rp G_t(s)\, ds
+\Phi(x)G_t(t)-\Phi\lp \frac{x\sqrt{t}}{\sqrt{a}}\rp G_t(a)\rb \\
&= \Phi(x) +\frac{1}{2}x\sqrt{t}\int_0^t \frac{1}{s^{3/2}} \phi\lp \frac{x\sqrt{t}}{\sqrt{s}}\rp G_t(s)\, ds,
\end{split}\]
where we've used that $G_t(t)=1$, and $G_t(a)\searrow 0$ as $a\searrow 0$ by \eqref{Eqn:SmallS} while $\Phi$ is bounded. Further, every factor in the last integral is non-negative, and $\phi\leq \frac{1}{\sqrt{2\pi}}$ everywhere, so, for $x\geq 0$ and $t>2$, we have
\begin{equation}\label{Eqn:FEst}
\Phi(x) \leq F_t(x) \leq \Phi(x)+\frac{x\sqrt{t}}{2\sqrt{2\pi}}\int_0^t \frac{1}{s^{3/2}} G_t(s)\, ds .
\end{equation}

We partition the region of integration $s\in[0,t]$ into 3 intervals. First, on $[0,\eps']$, we use \eqref{Eqn:SmallS} to see that
\[
\int_0^{\eps'} \frac{1}{s^{3/2}} G_t(s)\, ds \leq \frac{C}{t} \int_0^{\eps'} \frac{-\log s}{s^{1/2}} \, ds 
= \frac{C}{t} \sqrt{\eps'}\lp 4-2\log\eps'\rp 
=O\lp \frac{1}{t}\rp ,
\]
where we recall that $\eps'<1$ so that $\log \eps' <0$. Next, on $[\eps',\frac{1}{4}t]$, we use \eqref{Eqn:MediumS} to see that
\[\begin{split}
\int_{\eps'}^{\frac{1}{4}t} \frac{1}{s^{3/2}} G_t(s)\, ds &\leq c \int_{\eps'}^{\frac{1}{4}t} \frac{1}{s^{3/2}\lp t-2s \rp} \, ds \\
&= c\lb \frac{2\sqrt{2}}{t^{3/2}}\lp \Arctanh\lp\frac{\sqrt{2}}{2}\rp - \Arctanh\lp\frac{\sqrt{2}\sqrt{\eps'}}{\sqrt{t}}\rp\rp
+\frac{2}{\sqrt{\eps'}}\frac{1}{t}-\frac{4}{t^{3/2}}\rb  \\
&=O\lp \frac{1}{t}\rp .
\end{split}\]
Finally,  on $[\frac{1}{4}t,t]$, we use \eqref{Eqn:LargeS} to see that
\[
\int_{\frac{1}{4}t}^t \frac{1}{s^{3/2}} G_t(s)\, ds \leq \int_{\frac{1}{4}t}^t \frac{1}{s^{3/2}} \, ds
=\frac{2}{\sqrt{t}}  
=O\lp \frac{1}{\sqrt{t}}\rp .
\]
Putting these three together, we see that
\[
\int_0^t \frac{1}{s^{3/2}} G_t(s)\, ds = O\lp\frac{1}{\sqrt{t}}\rp ,
\]
and comparing with \eqref{Eqn:FEst} gives the theorem.
\end{proof}

We return to determining the asymptotic rate of $\Prob\lp \sigma_a >t \rp$. The reflection principle for $z_t$ gives
\[
\Prob\lp \sigma_a >t \rp =1-2\Prob\lp z_t>a \rp .
\]
Since $P\lp z_t>a \rp = P\lp \frac{z_t}{\sqrt{t}}>\frac{a}{\sqrt{t}} \rp=1-F_t\lp \frac{a}{\sqrt{t}} \rp$, assuming that $t>T_0$, we  apply Theorem \ref{THM:Ft-SL2} to get
\[
2\Phi\lp  \frac{a}{\sqrt{t}} \rp-1 \leq  P\lp \sigma_a >t \rp  \leq 2\Phi\lp\frac{a}{\sqrt{t}}\rp-1+2cx .
\]
Recall that
\[\begin{split}
\frac{1}{2}+\frac{1}{\sqrt{2\pi}}x+O\lp x^2\rp & \leq \Phi(x) \quad\text{for $x\geq 0$ and near 0} , \\
\text{and}\quad \Phi(x)&\leq \frac{1}{2}+\frac{1}{\sqrt{2\pi}}x \quad\text{for all $x\geq 0$.} 
\end{split}\]
Thus, we have that, for $t\geq T_0$,
\[
P\lp \sigma_a >t \rp  \leq \lp\frac{1}{\sqrt{2\pi}} +2c\rp \frac{a}{\sqrt{t}} \quad \text{for all $a>0$.}
\]
Moreover, this bound is sharp, in the sense that, for any $a>0$ and $\eps>0$, there exists $T'>0$ (which may depend on $a$ and $\eps$), such that
\[
P\lp \sigma_a >t \rp  \geq \lp\frac{1}{\sqrt{2\pi}}-\eps\rp \frac{a}{\sqrt{t}}  \quad\text{for $t\geq T'$}.
\]

We summarize these computations as follows.

\begin{THM}\label{THM:SLVert}
Consider two points in $\widetilde{\operatorname{SL}(2)}$ that are in the same vertical fiber with vertical displacement $2a>0$; then there exist cylindrical coordinates such that the points are $(0,0,0)$ and $(0,0,2a)$. If $\Law\lp B_t\rp$ and $\Law\lp \widetilde{B}_t\rp$ are the transition measures for the heat flow from these two points, then there exist constants $c>0$ and $T_0>0$, which do not depend on $a$, such that
\[
\TVd \lp \Law\lp B_t\rp, \Law\lp \tilde{B}_t\rp \rp \leq c\frac{a}{\sqrt{t}} \quad \text{for all $t>T_0$.}
\]
Moreover, this bound is sharp in the sense that there exists a constant $c'>0$ such that, for any $a>0$ there exists $T'>0$ (which may depend on $a$), such that
\[
\TVd \lp \Law\lp B_t\rp, \Law\lp \tilde{B}_t\rp \rp  \geq c' \frac{a}{\sqrt{t}}  \quad\text{for $t\geq T'$}.
\]
\end{THM}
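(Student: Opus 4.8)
The plan is to obtain both bounds by feeding the quantitative estimate of Theorem \ref{THM:Ft-SL2} into the identity furnished by the maximality and reflection principle of Theorem \ref{THM:SLUMax}; this simply reorganizes the computation sketched in the paragraphs above. First I would record that, because the vertical coupling of Theorem \ref{THM:SLUMax} is maximal, the Aldous inequality \eqref{Eqn:Aldous} is an equality, so $\TVd\lp \Law\lp B_t\rp, \Law\lp \tilde{B}_t\rp \rp = \Prob\lp \sigma_a > t\rp$ for every $t > 0$. The reflection principle from the same theorem gives $\Prob\lp \sigma_a > t\rp = 1 - 2\Prob\lp z_t \geq a\rp$, and since $\Prob\lp z_t = a\rp = 0$ this can be rewritten, in terms of the cdf $F_t$ of $z_t/\sqrt{t}$, as $\Prob\lp \sigma_a > t\rp = 2 F_t\lp a/\sqrt{t}\rp - 1$. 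So everything reduces to two-sided control of $F_t$ near $0$.

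For the upper bound I would apply the right-hand inequality of Theorem \ref{THM:Ft-SL2} at $x = a/\sqrt{t} \geq 0$, namely $F_t\lp a/\sqrt{t}\rp \leq \Phi\lp a/\sqrt{t}\rp + c\, a/\sqrt{t}$ for $t > T_0$, and combine it with the elementary global estimate $\Phi(x) \leq \frac{1}{2} + x/\sqrt{2\pi}$ valid for all $x \geq 0$, producing a bound $\Prob\lp \sigma_a > t\rp \leq c_1\, a/\sqrt{t}$ with $c_1$ an explicit constant depending only on $c$. The key point is that $c$ and $T_0$ come from Theorem \ref{THM:Ft-SL2} and are independent of $a$, and that the bound on $\Phi$ holds for all $x \geq 0$; together these give the required uniformity of the constants in the upper estimate.

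For the lower bound I would instead use the left-hand inequality $F_t(x) \geq \Phi(x)$ (for $x \geq 0$, $t > T_0$) together with the local Taylor lower bound $\Phi(x) \geq \frac{1}{2} + x/\sqrt{2\pi} - K x^2$ valid for $x$ in a neighbourhood of $0$, which gives $\Prob\lp \sigma_a > t\rp \geq c_0\, a/\sqrt{t} - 2K a^2/t$ for the explicit positive constant $c_0$ coming from the linear coefficient of $\Phi$ at $0$. Fixing any $c' < c_0$, the quadratic term is absorbed once $t \geq T'$, where $T'$ depends on $a$ through the requirement that $a/\sqrt{t}$ be small relative to $c_0 - c'$; this yields $\Prob\lp \sigma_a > t\rp \geq c'\, a/\sqrt{t}$ for $t \geq T'$. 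The $a$-dependence of $T'$ is genuine, which is why the lower statement asserts uniformity only of the constant $c'$, not of the threshold.

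I do not expect a real obstacle here: essentially all of the analytic difficulty has been absorbed into Theorem \ref{THM:Ft-SL2}, the quantitative central limit theorem for the hyperbolic stochastic area, which I am taking as given. The only thing requiring care is the bookkeeping of uniformity — keeping track of which inequalities ($F_t$ versus $\Phi$, and $\Phi(x)\le\frac12+x/\sqrt{2\pi}$) hold for all $x\ge0$ versus which Taylor lower bound holds only near $x=0$ — so that one can correctly assert that $c$, $c_1$, and $T_0$ are $a$-independent while $T'$ need not be.
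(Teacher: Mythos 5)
Your proposal is correct and follows essentially the same argument as the paper: convert the coupling time tail to $2F_t(a/\sqrt t)-1$ via maximality and the reflection principle, then sandwich $F_t$ between $\Phi$ and $\Phi + c\,x$ using Theorem \ref{THM:Ft-SL2}, and finish with the linear upper bound on $\Phi$ (global, giving $a$-independent constants) and the Taylor lower bound near $0$ (local, forcing $T'$ to depend on $a$). The bookkeeping of uniformity you flag is exactly the point the paper relies on, and you have it right.
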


From here, the same proof as for Theorem \ref{THM:HVGrad} yields the following.

\begin{THM}\label{THM:SLUVGrad}
Let $P_t=e^{\frac{t}{2}\Lap_{\sH}}$ be the heat semigroup on $\widetilde{\operatorname{SL}(2)}$ and consider $f\in L^{\infty}(\widetilde{\operatorname{SL}(2)})$. Then there exist constants $c>0$ and $T_0>0$ such that, at any point $(x,y,z)\in\widetilde{\operatorname{SL}(2)}$ and for any time $t>T_0$, we have
\[
\lab Z P_t f (x,y,z) \rab = \lab \nabla_{V} P_t f(x,y,z) \rab \leq \frac{c}{\sqrt{t}}\|f\|_{\infty} .
\]
\end{THM}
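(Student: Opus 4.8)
The plan is to follow the argument of Theorem~\ref{THM:HVGrad}, using the bound of Theorem~\ref{THM:SLVert} in place of the exact coupling-time formula available on $\bH$. Since the sub-Laplacian $\Delta_{\sH}$, the vertical vector field $Z=\widetilde{Z}=\partial_z$, and hence the heat semigroup $P_t$ are all left invariant on $\widetilde{\operatorname{SL}(2)}$, it suffices to bound $\lab Z P_t f\rab$ at the identity element, which in cylindrical coordinates is the point $(0,0,0)$; for a general $g$ one has $Z P_t f(g) = Z P_t(f\circ L_g)(0,0,0)$ with $\|f\circ L_g\|_{\infty}=\|f\|_{\infty}$, where $L_g$ denotes left translation. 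Moreover, $P_t f$ is well defined, satisfies $\|P_t f\|_{\infty}\le\|f\|_{\infty}$, and is smooth for $t>0$ whenever $f\in L^{\infty}$, because $\partial_t-\tfrac{1}{2}\Delta_{\sH}$ is hypoelliptic: H\"ormander's bracket condition holds, since $\widetilde{X}$, $\widetilde{Y}$, and $[\widetilde{X},\widetilde{Y}]=\widetilde{Z}$ span the tangent space at every point. In particular $Z P_t f$ exists pointwise for every $t>0$.

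Write the vertical derivative at the origin as the difference quotient
\[
Z P_t f(0,0,0) = \lim_{a\searrow 0}\frac{P_t f(0,0,2a)-P_t f(0,0,0)}{2a},
\]
which is legitimate by the smoothness just recorded. For each fixed $a>0$, the points $(0,0,0)$ and $(0,0,2a)$ lie on the same vertical fiber with vertical displacement $2a$, so Theorem~\ref{THM:SLUMax} supplies a coupling $(B_t,\tilde B_t)$ of $\widetilde{\operatorname{SL}(2)}$-Brownian motions from these two points whose coupling time is $\sigma_a$. Since $f(B_t)=f(\tilde B_t)$ on the event $\{\sigma_a\le t\}$,
\[
\lab P_t f(0,0,2a)-P_t f(0,0,0)\rab = \lab \E\lb \lp f(B_t)-f(\tilde B_t)\rp\Ind_{\{\sigma_a>t\}}\rb\rab \le 2\|f\|_{\infty}\,\Prob\lp\sigma_a>t\rp .
\]
By Theorem~\ref{THM:SLVert} (equivalently, by the bound on $\Prob\lp\sigma_a>t\rp$ established just before it, together with maximality), there are constants $c>0$ and $T_0>0$, \emph{independent of $a$}, such that $\Prob\lp\sigma_a>t\rp\le c\,a/\sqrt t$ for all $t>T_0$. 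Dividing the preceding display by $2a$ gives, for every $a>0$ and every $t>T_0$,
\[
\lab \frac{P_t f(0,0,2a)-P_t f(0,0,0)}{2a}\rab \le \frac{c}{\sqrt t}\,\|f\|_{\infty},
\]
and letting $a\searrow 0$ yields $\lab Z P_t f(0,0,0)\rab\le \frac{c}{\sqrt t}\|f\|_{\infty}$ for $t>T_0$. Left invariance then transfers this to every point, proving the theorem (after possibly renaming $c$).

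There is essentially no obstacle in this step; the analytic work is all contained in Theorems~\ref{THM:SLUMax}, \ref{THM:Ft-SL2}, and \ref{THM:SLVert}. The only feature that genuinely matters is the \emph{uniformity in $a$} of the constant $c$ and the threshold $T_0$ in Theorem~\ref{THM:SLVert}: this is precisely what allows one to pass to the limit $a\searrow 0$ inside the difference quotient, and it was built into the earlier estimates, ultimately tracing back to the uniform-in-$x$ convergence rate in Theorem~\ref{THM:Ft-SL2}. A secondary point worth a word is that the difference quotient converges to $Z P_t f$, which is immediate from the smoothness of $P_t f$. Unlike the Heisenberg case, the resulting bound carries a non-explicit constant and is valid only for $t>T_0$, reflecting the absence of an exact formula for the law of $z_t$ on $\widetilde{\operatorname{SL}(2)}$; any improvement would require sharpening Theorem~\ref{THM:Ft-SL2}.
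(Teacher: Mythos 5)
Your proof is correct and is essentially the paper's own argument: the paper simply states that the proof of Theorem~\ref{THM:HVGrad} carries over, which is exactly the difference-quotient-plus-coupling-bound route you spell out, with the key observation that the constants $c$ and $T_0$ in Theorem~\ref{THM:SLVert} are uniform in $a$.
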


\subsection{Horizontal coupling} \label{Sect:SLU-H}

Unlike $\bH$, for points not on the same vertical fiber of $\widetilde{\operatorname{SL}(2)}$, a successful coupling is not possible. Indeed, the coupling of the $(r,\theta)$-marginals (which are Brownian motions on $\bH_+^2$) is already an obstruction to a successful coupling, but it follows from the above that this is the only obstruction. If we consider the two-stage coupling in which we first use the standard (Riemannian, Markov) reflection coupling on $\bH_+^2$ to couple the $(r,\theta)$-marginals and then, once the particles are on the same vertical fiber, use the vertical reflection coupling just developed, it follows from the fact that the vertical coupling is successful that the particles will couple in $\widetilde{\operatorname{SL}(2)}$ if and only if the first stage of the coupling is successful. 

\begin{lemma}\label{Lem:HBM}
Let $p$ and $q$ be two distinct points of $\bH_+^2$, and let the (hyperbolic) distance between them be $2r$. Then the reflection coupling of Brownian motions started from these two points is maximal and it succeeds with probability
\[
1-\frac{4}{\pi}\arctan\lp \tanh \lp\frac{r}{2}\rp \rp ,
\]
that is, the probability that the processes couple in finite time is given by the above.
\end{lemma}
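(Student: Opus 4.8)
The plan is to reduce the computation of the success probability of the reflection coupling on $\bH^2_+$ to a hitting-probability computation for a single hyperbolic Brownian motion, and then to evaluate that hitting probability using the known harmonic function on the hyperbolic plane that separates the two half-spaces determined by the reflecting geodesic. First I would set up the geometry: given $p$ and $q$ at distance $2r$, there is a unique geodesic $\gamma$ equidistant from $p$ and $q$ (the perpendicular bisector), and the reflection $R$ through $\gamma$ swaps $p$ and $q$. The reflection coupling runs $B_t$ from $p$, sets $\tilde B_t = R(B_t)$ until the first hitting time $\tau_\gamma$ of $\gamma$, and then sets $\tilde B_t = B_t$ for $t \geq \tau_\gamma$; since $R$ is an isometry this makes $\tilde B_t$ a Brownian motion from $q$, and the coupling succeeds precisely on the event $\{\tau_\gamma < \infty\}$. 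So the success probability is $\Prob_p(\tau_\gamma < \infty)$, i.e. the probability that a hyperbolic Brownian motion from $p$ ever reaches the bisecting geodesic. Maximality: on $\{\tau_\gamma = \infty\}$ the processes stay on strictly opposite (open) sides of $\gamma$ forever, so for the half-space $U$ on $p$'s side, $\Law(B_t)(U) - \Law(\tilde B_t)(U) = \Prob_p(B_t \in U,\ \tau_\gamma > t) - \Prob_q(\tilde B_t \in U,\ \tau_\gamma > t)$, and by the reflection symmetry of the coupled pair across $\gamma$ together with the fact that $B_t$ has a density (so $\Prob(B_t \in \gamma) = 0$), this difference equals $\Prob_p(\tau_\gamma > t)$, matching the Aldous lower bound — exactly as in the proof of Theorem \ref{THM:HMax}.

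The core computation is $\Prob_p(\tau_\gamma < \infty)$. The cleanest route is via the upper half-plane (or disk) model, placing $\gamma$ as a convenient geodesic. In the upper half-plane model, take $\gamma$ to be the vertical line $\{\Real w = 0\}$, equivalently the imaginary axis; the function $u(w) = \frac{2}{\pi}\arctan\!\big(\frac{\Real w}{\Imag w}\big)$ (the harmonic measure of one boundary arc, equal to the signed angle) is a bounded harmonic function on each of the two half-spaces, vanishing on $\gamma$, and hence a bounded harmonic function that can be used in an optional-stopping argument. Since hyperbolic Brownian motion is transient with a full boundary at infinity, $\tau_\gamma < \infty$ iff the process, run from $p$'s side, crosses to the $\gamma$ before escaping to the boundary-at-infinity of $p$'s half-space; applying optional stopping to the bounded harmonic function on that half-space (with boundary value $0$ on $\gamma$ and $1$ on the portion of $\partial_\infty \bH^2_+$ lying on the far side) gives $\Prob_p(\tau_\gamma < \infty) = 1 - u_\infty(p)$, where $u_\infty(p)$ is the harmonic measure (as seen from $p$) of the boundary arc subtended by the half-space not containing $\gamma$ in its closure. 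One then expresses this harmonic measure in terms of $r = \frac12 \dist(p,q) = \dist(p,\gamma)$: a point at hyperbolic distance $r$ from the bisecting geodesic subtends, at the two ideal endpoints of $\gamma$, an angle whose harmonic-measure value is exactly $\frac{4}{\pi}\arctan(\tanh(r/2))$. This last identity is most transparent in the disk model with $\gamma$ a diameter and $p$ on the perpendicular radius at distance $r$: the Euclidean distance of $p$ from the center is $\tanh(r/2)$, and the harmonic measure of the near boundary arc (the one on $p$'s side, cut off by the two endpoints of the diametral geodesic) works out by the Poisson kernel to $\frac12 + \frac{2}{\pi}\arctan(\tanh(r/2))$, so the far arc has harmonic measure $\frac12 - \frac{2}{\pi}\arctan(\tanh(r/2))$ — wait, I should be careful about which arc; the success probability is the harmonic measure of the near arc minus that of the far arc only after accounting for the fact that crossing $\gamma$ can happen from either ideal endpoint. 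The clean statement: $\Prob_p(\tau_\gamma < \infty) = 1 - (\text{harmonic measure at } p \text{ of the far boundary arc})$, and a direct Poisson-kernel integral in the disk model gives this as $1 - \frac{4}{\pi}\arctan(\tanh(r/2))$.

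The main obstacle I anticipate is getting the harmonic-measure bookkeeping exactly right — in particular, correctly identifying which boundary arc corresponds to "never hitting $\gamma$," and being careful that the two ideal endpoints of $\gamma$ are themselves a $\Prob$-null set so they do not affect the answer. A secondary technical point is justifying optional stopping: one should truncate at exit times of large hyperbolic balls, use that the bounded harmonic function $u_\infty$ extends continuously to the closed disk except at the two endpoints of $\gamma$, and pass to the limit using transience (so that $B_t$ converges a.s. to a point of $\partial_\infty \bH^2_+$ with the harmonic-measure law) together with dominated convergence. Once these are pinned down, substituting $r = \frac12\dist(p,q)$ and simplifying yields the claimed success probability $1 - \frac{4}{\pi}\arctan(\tanh(r/2))$, and the maximality follows from the reflection-symmetry argument sketched above, completing the proof.
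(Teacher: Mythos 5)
Your overall strategy is the same as the paper's: reduce success of the reflection coupling to finiteness of the hitting time of the bisecting geodesic $\gamma$, prove maximality by the same computation as in Theorem \ref{THM:HMax}, and evaluate $\Prob_p(\tau_\gamma<\infty)$ by a Poisson-kernel/harmonic-measure computation in the disk model with $a=\tanh(r/2)$. The maximality part and the geometric setup are fine. The problem is in the harmonic-measure bookkeeping, which you yourself flag as the main risk and then do not resolve: as written, the identification of the success probability with a harmonic measure is wrong, and the two candidate formulas you offer are mutually inconsistent and neither gives the stated answer.

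Concretely, let $h_{\mathrm{near}}(p)$ and $h_{\mathrm{far}}(p)$ denote the harmonic measures, \emph{in the full disk}, of the two boundary arcs cut off by the endpoints of $\gamma$. The event $\{\tau_\gamma=\infty\}$ is the event that Brownian motion killed on $\gamma$ exits the \emph{half-disk} $D^+$ (bounded by $\gamma$ and the near arc) through the near arc; this is a Dirichlet problem on $D^+$ with data $1$ on the near arc and $0$ on $\gamma$, and the odd-reflection extension to the full disk has boundary data $+1$ on the near arc and $-1$ on the far arc. Hence
\[
\Prob_p\lp \tau_\gamma=\infty\rp = h_{\mathrm{near}}(p)-h_{\mathrm{far}}(p), \qquad \Prob_p\lp \tau_\gamma<\infty\rp = 2\,h_{\mathrm{far}}(p).
\]
Your first formula, $\Prob_p(\tau_\gamma<\infty)=1-h_{\mathrm{far}}(p)$, would give $\tfrac12+\tfrac{2}{\pi}\arctan(\tanh(r/2))$ rather than $1-\tfrac{4}{\pi}\arctan(\tanh(r/2))$; it conflates harmonic measure in the full disk with harmonic measure in the half-disk (the far arc is not even part of $\partial D^+$). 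Your second formula, ``success probability $=h_{\mathrm{near}}-h_{\mathrm{far}}$,'' is the \emph{failure} probability. The missing step is precisely the symmetrization just described (which is what the paper does explicitly: extend $u$ oddly across the diameter and integrate the $\pm1$ data against the Poisson kernel $P_a$). Once that is inserted, the computation $h_{\mathrm{near}}-h_{\mathrm{far}}=\tfrac{4}{\pi}\arctan(\tanh(r/2))$ and hence $\Prob_p(\tau_\gamma<\infty)=1-\tfrac{4}{\pi}\arctan(\tanh(r/2))$ goes through, and the rest of your argument (conformal invariance up to time change, transience, and the optional-stopping justification via exhaustion by balls) is sound.
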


\begin{proof}
Consider the Poincar\'e disk model of hyperbolic space, with Cartesian coordinates $(x,y)$ on the disk. By the symmetries of $\bH_+^2$, without loss of generality, we can assume our two processes start from $(a,0)$ and $(-a,0)$ for $0<a<1$, where $a$ depends on $r$ in a way we discuss below. If $(x_t,y_t)$ is the Brownian motion started from $(a,0)$ and $\sigma'_r$ is the first time $x_t$ hits 0, then the reflection coupling is given by letting $(\tilde{x}_t,\tilde{y}_t)=(-x_t,y_t)$ until $\sigma'_r$, and then letting $(\tilde{x}_t,\tilde{y}_t)=(x_t,y_t)$ for $t\geq \sigma'_r$ (is $\sigma'_r$ is finite, otherwise the previous equation holds for all time). Then the processes successfully couple if and only if $\sigma'_r$ is finite. That this coupling is maximal and Markov follows from and \cite{Kuwada-Suf} and \cite{Kuwada-Nec} as discussed in Section \ref{Sect:Intro1}, but in this simple case, it's also clear from the construction (maximality follows from the analogous computation to \eqref{Eqn:Max}). So it remains to compute the probability that $\tau$ is finite. There are a few ways to do this, such as considering the SDE satisfied by the distance between the processes, but we give a simple argument using the harmonic function theory of the disk.

Let $u(a,b)$ be the probability that a Brownian motion started from $(a,b)$ in the right half-disk (that is, with $a^2+b^2<1$ and $a>0$) hits the circle $x^2+y^2=1$ before it hits the line $\{x=0\}$, which is the diameter of the circle corresponding to the bisector between $(-a,0)$ and $(a,0)$. (Whether the Brownian motion is understood as a hyperbolic or Euclidean Brownian motion doesn't matter; since $\bH^2_+$ and the disk are conformally equivalent and thus their Brownian motions differ only by time change. In the disk model, the transience of hyperbolic Brownian motion corresponds to the fact that the Euclidean Brownian motion goes to the boundary of the disk.) Then $u$ is harmonic and is determined by having boundary values 1 on the semi-circle and 0 on the diameter. By symmetry, $u$ is the restriction to the right half-disk of the harmonic function on the disk which has boundary values $-1$ on the part of the unit circle with with $x<0$ and 1 on the part with $x\geq 0$. Thus we now extend $u$ to the entire disk in this way, and observe that $u$ is given in terms of the integral of the boundary values against the Poisson kernel.

In particular, if we let $(r,\theta)$ be the usual polar coordinates on the disk, we have the Poisson kernel
\[
P_r(\theta)=\frac{1-r^2}{1-2r\cos\theta+r^2}.
\]
Then for $u(a,0)$, the contributions from $y>0$ and $y<0$ are the same by symmetry, so we have
\[
u(a,0)=\frac{1}{\pi}\int_0^{\pi}P_a(-t)f\lp e^{it}\rp\, dt
\]
where
\[
f\lp e^{it}\rp=\begin{cases} 1& \text{ if $t< \pi/2$}  \\ -1& \text{ if $t\geq \pi/2$}  \end{cases} .
\]
After the change of variable $t\mapsto t-\frac{\pi}{2}$ on the interval $\lp\frac{\pi}{2},\pi\rp$, we can rewrite this as
\[
u(a,0)=\frac{1}{\pi}\int_0^{\pi/2} \lp \frac{1-a^2}{1-2a\cos t+a^2}-\frac{1-a^2}{1+2a\sin t+a^2}\rp \, dt .
\]
This can be evaluated explicitly, giving
\[
u(a,0)= \frac{1}{2}-\frac{2}{\pi}\lp \arctan\lp \frac{a+1}{a-1}\rp +\arctan\lp \frac{1}{a}\rp\rp .
\]

Continuing, the point $(a,0)$ has hyperbolic distance from the origin of $2\Arctanh\lp a\rp$, so that $a$ is determined by asking for this distance to be $r$. Solving this and using that $u(a,0)$ is the probability of the particle leaving the half disk at the circle, rather than the diameter, we conclude that the probability that $\sigma'_r$ is finite is given by
\[
\Prob\lp \sigma'_r <\infty \rp =
\frac{1}{2}+\frac{2}{\pi}\lp \arctan\lp \frac{a+1}{a-1}\rp +\arctan\lp \frac{1}{a}\rp\rp
 \text{ where $a=\tanh\lp \frac{r}{2}\rp$} .
\]
Using that $\arctan\lp\frac{x+1}{x-1}\rp = -\frac{\pi}{4}-\arctan x $ and $\arctan\lp \frac{1}{x}\rp=\frac{\pi}{2}-\arctan x$ (at least for $0<x<1$, which is the current situation), we can simplify the above, giving the desired result.
\end{proof}

In light of the above, for any two points in $\widetilde{\operatorname{SL}(2)}$, we refer to the hyperbolic distance between their projections onto $\bH_+^2$, under the submersion, as their horizontal displacement.

We can now prove the following.

\begin{THM}\label{THM:SLU-Hor}
Let $(r_0,\theta_0,z_0)$ and $\lp\tilde{r}_0,\tilde{\theta}_0,\tilde{z}_0\rp$ be two points in $\widetilde{\operatorname{SL}(2)}$, and let $2r$ be their horizontal displacement (that is, the hyperbolic distance between $(r_0,\theta_0)$ and $\lp\tilde{r}_0,\tilde{\theta}_0\rp$ as points in $\bH_+^2$, written in polar coordinates). Then if $B_t$ and $\tilde{B}_t$ are $\widetilde{\operatorname{SL}(2)}$-Brownian motions from $(r_0,\theta_0,z_0)$ and $\lp\tilde{r}_0,\tilde{\theta}_0,\tilde{z}_0\rp$ respectively, we have
\begin{align*}
\lim_{t\to \infty}\TVd \lp \Law\lp B_t\rp, \Law\lp \tilde{B}_t\rp \rp = \frac{4}{\pi}\arctan\lp \tanh \lp\frac{r}{2}\rp \rp.
\end{align*}
If $f\in L^{\infty}\lp \widetilde{\operatorname{SL}(2)} \rp$ and $P_t=e^{\frac{1}{2}\Lap_{\sH}t}$ is the heat semigroup, then 
\[
\limsup_{t\to \infty} \lab P_t f (r_0,\theta_0,z_0) -P_t f\lp\tilde{r}_0,\tilde{\theta}_0,\tilde{z}_0\rp  \rab \leq \frac{4}{\pi}\arctan\lp \tanh \lp\frac{r}{2}\rp \rp \cdot\|f\|_{\infty}.
\]
Finally, if $f$ is also harmonic, then it is constant on vertical fibers and is given by the lift of a bounded harmonic function on $\bH_+^2$, that is, $f(r,\theta,z)=f_0(r,\theta)$ where $f_0$ is a bounded harmonic function on the hyperbolic plane.
\end{THM}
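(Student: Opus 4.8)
The plan is to run the two-stage coupling described above and to read everything off of Lemma~\ref{Lem:HBM}, Theorem~\ref{THM:SLUMax}, and Theorem~\ref{THM:SLVert}. Using left-invariance and rotational symmetry, normalize coordinates so that the submersions $\pi(q)$ and $\pi(\tilde q)$ are symmetric about the origin of the polar coordinates on $\bH_+^2$, hence at hyperbolic distance $2r$. Couple the $(r,\theta)$-marginals, which are $\bH_+^2$-Brownian motions from $\pi(q)$ and $\pi(\tilde q)$, by the reflection coupling of Lemma~\ref{Lem:HBM}; call its coupling time $\sigma'_r$. On $\{\sigma'_r<\infty\}$ the two $\widetilde{\operatorname{SL}(2)}$-Brownian motions lie, at time $\sigma'_r$, on a common vertical fiber with some random vertical displacement, at which point we run the vertical reflection coupling of Theorem~\ref{THM:SLUMax}, which is a.s.\ successful. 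Since in the horizontal reflection coupling the marginals agree only at (and after) $\sigma'_r$, the full coupling time $\tau$ is finite precisely on $\{\sigma'_r<\infty\}$; by Lemma~\ref{Lem:HBM} this gives $\Prob(\tau=\infty)=\Prob(\sigma'_r=\infty)=\frac{4}{\pi}\arctan\lp\tanh\lp\frac{r}{2}\rp\rp=:p_r$.

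For the first limit, Aldous's inequality \eqref{Eqn:Aldous} gives $\TVd\lp\Law\lp B_t\rp,\Law\lp\tilde B_t\rp\rp\le\Prob\lp\tau>t\rp$, and since $\{\tau>t\}\searrow\{\tau=\infty\}$ we get $\limsup_{t\to\infty}\TVd\lp\Law\lp B_t\rp,\Law\lp\tilde B_t\rp\rp\le p_r$. For the matching lower bound I would push forward by the submersion $\pi$: total variation distance does not increase under a measurable map, so $\TVd\lp\Law\lp B_t\rp,\Law\lp\tilde B_t\rp\rp\ge\TVd\lp\Law\lp\pi B_t\rp,\Law\lp\pi\tilde B_t\rp\rp$, and the right-hand side is the total variation distance between two $\bH_+^2$-Brownian motions started at distance $2r$. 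Because the $\bH_+^2$-reflection coupling is maximal (Lemma~\ref{Lem:HBM}), this equals $\Prob\lp\sigma'_r>t\rp\to p_r$. The two bounds together give $\lim_{t\to\infty}\TVd\lp\Law\lp B_t\rp,\Law\lp\tilde B_t\rp\rp=p_r$.

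The semigroup estimate is then immediate: $B_t$ and $\tilde B_t$ coincide on $\{\tau\le t\}$, so $\lab P_tf(q)-P_tf(\tilde q)\rab=\lab\E\lb\lp f(B_t)-f(\tilde B_t)\rp\Ind_{\{\tau>t\}}\rb\rab$ is controlled by $\|f\|_\infty\,\Prob\lp\tau>t\rp$, and letting $t\to\infty$ yields the claimed $\limsup$ bound. For the statement about harmonic functions, take $f\in L^\infty\lp\widetilde{\operatorname{SL}(2)}\rp$ with $\Delta_{\sH}f=0$; then $f$ is smooth by hypoellipticity and, by It\^o's formula, $f(B_t)$ is a bounded martingale, so $P_tf=f$. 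Theorem~\ref{THM:SLUVGrad} now gives $\lab Zf\rab=\lab ZP_tf\rab\le\frac{c}{\sqrt{t}}\|f\|_\infty$ for all $t>T_0$, forcing $Zf\equiv 0$; since the vertical fibers are connected and tangent to $Z=\partial_z$, $f$ depends only on $(r,\theta)$, say $f=f_0(r,\theta)$. Substituting into the cylindrical expression for $\Delta_{\sH}^{\operatorname{SL}(2)}$, the two $\partial_z$-terms drop out and $0=\Delta_{\sH}f=\partial_r^2 f_0+\coth(r)\,\partial_r f_0+\frac{1}{\sinh^2 r}\,\partial_\theta^2 f_0$, which is exactly the hyperbolic Laplacian of $f_0$; thus $f_0$ is a bounded harmonic function on $\bH_+^2$ and $f$ is its lift.

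Given the earlier results, almost all of this is bookkeeping: the genuinely substantive inputs are Lemma~\ref{Lem:HBM} (maximality of the hyperbolic reflection coupling together with the explicit escape probability) and the a.s.\ success of the vertical coupling of Theorem~\ref{THM:SLUMax}. The step most in need of care is the lower bound in the first assertion — the claim that projecting onto $\bH_+^2$ asymptotically loses no total variation — since this is what pins the limiting total variation distance down to exactly $p_r$ rather than merely bounding it from above.
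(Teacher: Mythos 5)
Your proof is correct and takes essentially the same route as the paper's, built on the two-stage coupling, Lemma~\ref{Lem:HBM}, Theorem~\ref{THM:SLUMax}, and the vertical bound (Theorem~\ref{THM:SLVert} or its corollary Theorem~\ref{THM:SLUVGrad}). Two of your steps are a bit cleaner than what the paper writes: for the upper bound, the paper runs a three-$\eps$ argument producing explicit times $T_1$, $T_2$ and a truncation level $A$, whereas your observation that $\Prob(\tau>t)\searrow\Prob(\tau=\infty)$ and that $\{\tau=\infty\}=\{\sigma'_r=\infty\}$ a.s.\ (since the vertical stage is a.s.\ successful) gets the same conclusion directly from Aldous plus continuity from above; and for the lower bound, the paper exhibits the test set $\{\theta\in(-\pi/2,\pi/2)\}$ explicitly, whereas your use of monotonicity of total variation under the push-forward by $\pi$ together with the maximality of the hyperbolic reflection coupling is the same computation packaged more abstractly. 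Both are fine, and your version avoids the $\eps$-bookkeeping.
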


\begin{proof}
We start with the total variation. After change of coordinates by an isometry, we can assume that $(r_0,\theta_0)=(r,0)$ and $\lp\tilde{r}_0,\tilde{\theta}_0\rp=(-r,0)$. Note that the set $\{(r,\theta,z)\in \widetilde{\operatorname{SL}(2)}:\theta\in(-\pi/2,\pi/2)\}$ gives the half-disk $x>0$ in notation of the proof of Lemma \ref{Lem:HBM}. Because the reflection coupling in Lemma \ref{Lem:HBM} is maximal (for $\bH_+^2$) and because the total variation distance of $\Law\lp B_t\rp$ and $\Law\lp \tilde{B}_t\rp$ can't be less than the total variation distance of their $\bH_+^2$-marginals, it follows that
\begin{equation}\label{Eqn:TV-LB}
\TVd \lp \Law\lp B_t\rp, \Law\lp \tilde{B}_t\rp\rp \geq 
\frac{4}{\pi}\arctan\lp \tanh (r/2) \rp \quad\text{for all $t\geq 0$.}
\end{equation}
 
 For the matching upper bound, we use the two-stage coupling introduced above. Choose some small, positive $\eps$. Then by Lemma \ref{Lem:HBM}, there exists a time $T_1>0$ such that the horizontal stage of the coupling has succeeded with probability  at least $ \frac{4}{\pi}\arctan\lp \tanh (r/2) \rp-\eps$ by time $T_1$; that is, in our earlier notation $\sigma'_r\leq T_1$ with probability at least $ \frac{4}{\pi}\arctan\lp \tanh (r/2) \rp-\eps$. Then $z_{\sigma'_r}-\tilde{z}_{\sigma'_r}$ on the set $\{\sigma'_r\leq T_1\}$ gives a sub-probability distribution on $\bR$ (corresponding to the random vertical displacement from which we start the vertical reflection coupling), and there exists some $A>0$ such that this sub-probability distribution gives mass at least $ \frac{4}{\pi}\arctan\lp \tanh (r/2) \rp-2\eps$ to the interval $[-A,A]$. Considering the vertical reflection coupling (the second stage of the two-stage coupling), we see from Theorem \ref{THM:SLVert} that, given the $A$ from above, there is a time $T_2$ such that for any vertical displacement less than or equal to $A$, the vertical reflection coupling succeeds with probability $1-\eps$ in time $T_2$. Putting this together, we conclude that, under the two-stage coupling, $B_t$ and $\tilde{B}_t$ couple by time $T_1+T_2$ with probability at least  $ \frac{4}{\pi}\arctan\lp \tanh (r/2) \rp-3\eps$. Since $\eps>0$ was arbitrary, this shows that
 \[
 \limsup_{t\to \infty}\TVd \lp \Law\lp B_t\rp, \Law\lp \tilde{B}_t\rp \rp \leq  \frac{4}{\pi}\arctan\lp \tanh \lp\frac{r}{2}\rp \rp.
 \]
 Combining this with \eqref{Eqn:TV-LB}, we see that $\lim_{t\to \infty}\TVd \lp \Law\lp B_t\rp, \Law\lp \tilde{B}_t\rp \rp$ is as claimed.

The asymptotic bound on the semigroup follows immediately from the total variation.

Finally, let $f$ be bounded and harmonic (on $\widetilde{\operatorname{SL}(2)}$). Then $P_tf=f$ for all $t$, and thus Theorem \ref{THM:SLVert} implies that $f$ is constant on vertical fibers. So writing $f$ as $f(r,\theta,z)=f_0(r,\theta)$ where $f_0$ is a bounded function on $\bH_+^2$, we see that $f$ being harmonic becomes the condition
\[
\lp \partial^2_r+\coth(r)\partial_r+\frac{1}{\sinh^2 r}\partial_\theta^2\rp f_0 =0
\]
at all points. But this operator is just the Laplacian on $\bH_+^2$ in polar coordinates, finishing the proof.
\end{proof}

\section{Coupling on $\operatorname{SL}(2)$}

We have already described the cylindrical coordinates and submersion geometry of $\operatorname{SL}(2)$ in Section \ref{Sect:SLGeo}. The difference with $\widetilde{\operatorname{SL}(2)}$ is that the $z$-coordinate is now taken modulo $4\pi$, which means that the vertical fiber is a circle, rather than a line.

\subsection{Vertical coupling on $\operatorname{SL}(2)$}

If we consider two points on the same vertical fiber of $\operatorname{SL}(2)$, without loss of generality they can be taken to be $(0,0,0)$ and $(0,0,2a)$ for $a\in (0,\pi]$, where we use cylindrical coordinates and take $\theta=0$ when $r=0$ by convention. If $B_t=(r_t,\theta_t,z_t)$, then we want to couple the processes once $z_t$ gets halfway to $2a$, but because the vertical fiber is a circle, this means that $z_t$ can hit $a$ or $a-2\pi$. Indeed, each fiber can be decomposed into the two open semi-circles $(a-2\pi,a)$ and $(-2\pi,a-2\pi)\cup (a,2\pi]$, plus the two boundary points. Thus, with a combination of abuse of notation and a desire to emphasize the parallels between all the cases considered in this paper, in the present context, we let $\sigma_a$ be the first time $B_t$ hits $\{z=a\}\cup\{z=a-2\pi\}$.

\begin{THM}\label{THM:SLMax}
Consider two points in $\operatorname{SL}(2)$ on the same vertical fiber with vertical displacement $2a\in(0,\pi)$; then there exist cylindrical coordinates such that the points are $(0,0,0)$ and $(0,0,2a)$ (after possibly switching them). Let $B_t=(r_t,\theta_t,z_t)$ be an $\operatorname{SL}(2)$-Brownian motion (expressed in cylindrical coordinates) started from $(0,0,0)$, and let $\sigma_a$ be the first hitting time of the set $\{z=a\}\cup\{z=a-2\pi\}$, which is a.s.\ finite. Then if $\tilde{B}_t=(\tilde{r}_t,\tilde{\theta}_t,\tilde{z}_t)$ is the process defined by
\[
\lp\tilde{r}_t,\tilde{\theta}_t,\tilde{z}_t\rp = \begin{cases}
\lp R_{\lp r_{\sigma_a} ,\theta_{\sigma_a}\rp} (r_t,\theta_t), 2a-z_t\rp  & \text{for $t\leq \sigma_a$} \\
(r_t,\theta_t,z_t)  & \text{for $t> \sigma_a$}
\end{cases}
\quad \text{with $z_t$ and $\tilde{z}_t$ understood modulo $4\pi$}
\]
(where $R_{(r,\theta)}$ is the reflection of $\bH_+^2$ as in Lemma \ref{Lem:BM}), $\tilde{B}_t$ is an $\operatorname{SL}(2)$-Brownian motion started from $(0,0,2a)$, coupled with $B_t$ such that $\sigma_a$ is their coupling time. Moreover, this coupling is maximal, and the coupling time $\sigma_a$ satisfies the reflection principle
\[
\Prob\lp \sigma_a> t\rp = 1-2 \Prob\lp z_t\in  (-2\pi,a-2\pi)\cup (a,2\pi] \rp .
\]
\end{THM}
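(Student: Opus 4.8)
The plan is to follow the blueprint of the proof of Theorem~\ref{THM:HMax} almost verbatim, adjusting only for the circular fiber. First I would verify that $\tilde B_t$ is an $\operatorname{SL}(2)$-Brownian motion from $(0,0,2a)$: Lemma~\ref{Lem:BM} (applied with $M=\bH_+^2$) already tells us that the horizontal part $R_{(r_{\sigma_a},\theta_{\sigma_a})}(r_t,\theta_t)$ is a hyperbolic Brownian motion from the origin; since reflection of $\bH_+^2$ reverses the sign of the stochastic area, and since $\sigma_a$ is the hitting time at which $z_{\sigma_a}\in\{a,a-2\pi\}$, in either case $2a-z_{\sigma_a}\equiv z_{\sigma_a}\pmod{4\pi}$, so $2a-z_t$ is the stochastic area of the reflected curve started from $2a$ (all mod $4\pi$). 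Thus $\tilde B_t$ is indeed an $\operatorname{SL}(2)$-Brownian motion from $(0,0,2a)$, and after $\sigma_a$ the two processes coincide by construction; that $\sigma_a$ is a.s.\ finite is noted in the statement (and follows because the hyperbolic radial process feeds a strictly increasing time change into the $z$-component, which therefore visits the whole circle).

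The key structural point is the post-$\sigma_a$ symmetry of $z_t$ \emph{around the coupling value}, reflected in the circle. Concretely, decompose the fiber into the two open arcs $A^- = (a-2\pi,a)$ (the arc ``containing'' the geodesic distance-minimizing path from $0$ that stays below $a$) and $A^+ = (-2\pi, a-2\pi)\cup(a,2\pi]$, together with the two endpoints $\{a,a-2\pi\}$. Reflection of the circle $\bR/4\pi\bZ$ through the two-point set $\{a,a-2\pi\}$ swaps $A^+$ and $A^-$. I would argue, exactly as in \eqref{Eqn:ZSymmetry}, that $(r_t,z_t)$ is a diffusion in its own right, so by the strong Markov property at $\sigma_a$ the conditional law of $z_{\sigma_a+s}$ given $\mathcal F_{\sigma_a}$ depends only on $r_{\sigma_a}$ and the starting point $z_{\sigma_a}\in\{a,a-2\pi\}$; writing $z_{\sigma_a+s} = z_{\sigma_a} + W_{\int_0^s \tanh^2(R_u/2)\,du}\pmod{4\pi}$ with $W$ an independent Brownian motion and $R$ a hyperbolic radial process started from $r_{\sigma_a}$, the increment $W_{\int_0^s\cdots}$ is symmetric about $0$, hence $z_{\sigma_a+s}$ is symmetric about $z_{\sigma_a}$ on the circle, hence (summing the two cases $z_{\sigma_a}=a$ and $z_{\sigma_a}=a-2\pi$, which by reflecting $a\to a$ and $a-2\pi\to a-2\pi$ both respect the $A^\pm$ splitting) $z_{\sigma_a+s}$ lands in $A^+$ and in $A^-$ each with probability $1/2$. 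Before $\sigma_a$, $z_t$ stays in the arc $A^-$ while $\tilde z_t = 2a - z_t$ stays in $A^+$. Also, for each fixed $t$, $\Prob(z_t\in\{a,a-2\pi\})=0$, since $z_t$ is a time-changed Brownian motion with strictly increasing clock (equivalently, $B_t$ has a smooth density by H\"ormander), so $\Prob(\sigma_a = t)=0$.

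With these facts the reflection principle is immediate: $\Prob(\sigma_a>t) = \Prob(\sigma_a>t,\ z_t\in A^+) + \Prob(\sigma_a>t,\ z_t\in A^-)$, and the first term is $0$ while on $\{\sigma_a\le t\}$ the event $\{z_t\in A^+\}$ has conditional probability $1/2$, so $\Prob(z_t\in A^+) = 0 + \tfrac12\Prob(\sigma_a\le t)$, giving $\Prob(\sigma_a>t) = 1 - 2\Prob(z_t\in A^+)$. Finally, for maximality, I would reuse the computation \eqref{Eqn:Max} with $U = \{z\in A^-\}\subset\operatorname{SL}(2)$: Aldous's inequality \eqref{Eqn:Aldous} gives $\Prob(\sigma_a>t)$ as an upper bound for $\TVd$, and for the matching lower bound, $\Law(B_t)(U)-\Law(\tilde B_t)(U) = \Prob(z_t\in A^-) - \Prob(\tilde z_t\in A^-)$; splitting each term over $\{\sigma_a<t\}$ and $\{\sigma_a>t\}$ and using that on $\{\sigma_a<t\}$ each of $z_t,\tilde z_t$ is in $A^-$ with probability $\tfrac12$, on $\{\sigma_a>t\}$ we have $z_t\in A^-$ surely and $\tilde z_t\in A^-$ never, and the boundary has probability zero, the difference collapses to $\Prob(\sigma_a>t)$. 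The main (minor) obstacle is purely bookkeeping: making sure the reflection of the circle through the unordered pair $\{a,a-2\pi\}$ is correctly identified as the isometry $z\mapsto 2a-z\pmod{4\pi}$ and that it genuinely interchanges $A^+$ and $A^-$ while fixing the two boundary points, so that the symmetry argument and the choice of $U$ are consistent; once that is pinned down, every step is a transcription of the Heisenberg proof.
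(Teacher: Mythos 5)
Your proposal is correct and follows essentially the same route as the paper's proof: you verify continuity of $\tilde{B}_t$ at $\sigma_a$ (checking $2a-z_{\sigma_a}\equiv z_{\sigma_a}\pmod{4\pi}$ at both boundary values), invoke Lemma~\ref{Lem:BM} plus sign-reversal of the stochastic area to conclude $\tilde{B}_t$ is an $\operatorname{SL}(2)$-Brownian motion, identify the reflection $z\mapsto 2a-z$ as the isometry of the fiber interchanging the arcs $A^-=(a-2\pi,a)$ and $A^+=(-2\pi,a-2\pi)\cup(a,2\pi]$, derive the post-$\sigma_a$ symmetry via the time-change representation and the argument of \eqref{Eqn:ZSymmetry}, and then run the computation \eqref{Eqn:Max} with $U=\{z\in A^-\}$ for maximality. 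This is exactly the paper's argument with the bookkeeping on the circular fiber spelled out slightly more explicitly.
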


\begin{proof}
In comparison with the proof of Theorem \ref{THM:SLUMax}, we need to account for taking the quotient of the vertical fiber. So we first observe that
\[
\tilde{z}_{\sigma_a}=2a-z_{\sigma_a} =\begin{cases}
a &  \text{if $z_{\sigma_a}=a$} \\
2a-(a-2\pi) =a+2\pi = a-2\pi \mod 4\pi &\text{if $z_{\sigma_a}=a-2\pi$} .
\end{cases}
\]
This shows that $\tilde{B}_t$ is continuous at $t=\sigma_a$, so then by Lemma \ref{Lem:BM} (plus Remark \ref{Rmk:Sigma}) and the fact that reflection in $\bH_+^2$ reverses the sign of the stochastic area, we see that $\tilde{B}_t$ is an $\operatorname{SL}(2)$-Brownian motion. Moreover, we also see that the processes meet at time $\sigma_a$. Prior to $\sigma_a$, $z_t$ is in the open semi-circle $(a-2\pi,a)$ while $\tilde{z}_t$ is in the open semi-circle $(-2\pi,a-2\pi)\cup (a,2\pi]$, so $\sigma_a$ is the coupling time.

Note that the map $z\mapsto 2a-z$ on the fiber $(-2\pi,2\pi]$ exchanges $(a-2\pi,a)$ and $(-2\pi,a-2\pi)\cup (a,2\pi]$ and the evolution of $z_t$ is symmetric after $\sigma_a$ with respect to this reflection, by \eqref{Eqn:SLTC} and the same argument as in Theorem \ref{THM:HMax} (see \eqref{Eqn:ZSymmetry} in particular), from which the reflection principle follows. Finally, if we let $U=\{(r,\theta,z)\in \operatorname{SL}(2):z\in (a-2\pi,a)  \}\subset \operatorname{SL}(2)$, then we see that Equation \eqref{Eqn:Max} holds in the present situation, which gives the maximality of the coupling.
\end{proof}

\begin{remark}\label{Rmk:Circle}
While we have again reduced the coupling time to the hitting time of a certain set by the $z_t$-process, and we have a representation of $z_t$ as a time-changed Brownian motion, in this case we are not yet able to determine the precise asymptotics of $\sigma_a$. The difficulty comes from the fact that, even though $z_t$ is the same process as above, just taken modulo $4\pi$, for $\widetilde{\operatorname{SL}(2)}$, when the fiber was a line, we were interested in the hitting time of a certain level, while now, when the fiber is a circle, we're interested in the exit time from a bounded interval (that is, the first hitting time of $z=a$ or $z=a-2\pi$ is the same asking for the exit time from $(a-2\pi, a)$). Even for the classical case of a real-valued Brownian motion, computing the first passage time of the level $a$ by the reflection principle is straightforward, while computing the exit time of an interval using ``double reflection'' is much more involved. Indeed, it is easier to compute the lowest eigenvalue of the interval using Fourier series. For $z_t$ on $\operatorname{SL}(2)$, the independent time-change and the fact that $z_t$ is not a Markov process makes this more complicated. Further, for Theorem \ref{THM:SLVert}, it was enough to obtain a relatively course estimate on the time change and therefore on the density of $z_t$, since the sharp rate of decay of $\Prob\lp \sigma_a >t \rp$ was $1/\sqrt{t}$. But for $\operatorname{SL}(2)$, we expect an exponential decay for $\Prob\lp \sigma_a >t \rp$ (which we will see is the case in a moment), and determining the correct exponent requires more precise estimates.
\end{remark}

Instead of a sharp bound on the coupling time, we give a soft argument, using the compactness of the fibers, showing that the coupling time has exponential decay, as one would expect.

\begin{THM}\label{THM:SVert}
Consider two points in $\operatorname{SL}(2)$ that are in the same vertical fiber with vertical displacement $2a>0$; then there exist cylindrical coordinates such that the points are $(0,0,0)$ and $(0,0,2a)$. If $\Law\lp B_t\rp$ and $\Law\lp \tilde{B}_t\rp$ are the transition measures for the heat flow from these two points, then there exist constants $C>0$, $c>0$, and $T_0>0$, which do not depend on $a$, such that
\[
\TVd \lp \Law\lp B_t\rp, \Law\lp \tilde{B}_t\rp \rp \leq Ce^{-ct} \quad \text{for all $t>T_0$.}
\]
\end{THM}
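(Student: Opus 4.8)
The plan is to reduce the statement to a tail estimate for the coupling time $\sigma_a$ of Theorem~\ref{THM:SLMax}: since $\sigma_a$ is the coupling time of a coupling of the two Brownian motions, the Aldous inequality \eqref{Eqn:Aldous} gives $\TVd\lp\Law\lp B_t\rp,\Law\lp\tilde{B}_t\rp\rp\le\Prob\lp\sigma_a>t\rp$, so it suffices to prove $\Prob\lp\sigma_a>t\rp\le Ce^{-ct}$ with $C,c$ independent of $a$. The first step is to realize $\sigma_a$ as a time-changed interval-exit time for a one-dimensional Brownian motion. Using \eqref{Eqn:SLTC}, write $z_t=W_{S(t)}$ modulo $4\pi$, where $S(t)=\int_0^t\tanh^2\lp\frac{r_s}{2}\rp\,ds$ and $W$ is a standard one-dimensional Brownian motion independent of the radial process $r_s$. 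Since $\tanh^2$ is increasing and $r_s\to\infty$ a.s.\ (transience of hyperbolic Brownian motion), $S$ is a continuous, strictly increasing bijection of $[0,\infty)$; moreover, the preimage of $\{z=a\}\cup\{z=a-2\pi\}$ under reduction modulo $4\pi$ is the lattice $a+2\pi\Zed$, and $\lp a-2\pi,a\rp$ is the connected component of $\bR\setminus\lp a+2\pi\Zed\rp$ containing the starting point $0$. Hence $\sigma_a=S^{-1}\lp\sigma'_a\rp$, where $\sigma'_a$ is the first exit time of $W$ from the interval $\lp a-2\pi,a\rp$, which has length $2\pi$ for every $a$; and $\sigma'_a$ (a functional of $W$) is independent of $S(\cdot)$ (a functional of $r$).

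Here is where the compactness of the fiber enters: because the fiber is a circle, $\sigma'_a$ is the exit time from a \emph{bounded} interval, in contrast with the one-sided hitting time that appeared for $\widetilde{\operatorname{SL}(2)}$, and therefore has exponentially decaying tails. Precisely, there are absolute constants $C_1$ and $\lambda_1=\pi^2/(2(2\pi)^2)=1/8$, independent of $a$ and of the starting point, such that $\Prob\lp\sigma'_a>s\rp\le C_1 e^{-\lambda_1 s}$ for all $s\ge0$. Conditioning on the path of $r$ (hence on $S(t)$) and using the independence above,
\[
\Prob\lp\sigma_a>t\rp=\Prob\lp\sigma'_a>S(t)\rp=\E\lb\psi\lp S(t)\rp\rb\le C_1\,\E\lb e^{-\lambda_1 S(t)}\rb,\qquad \psi(s):=\Prob\lp\sigma'_a>s\rp.
\]

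It remains to bound $\E[e^{-\lambda_1 S(t)}]$, which is where the transience of the base is quantified. Fix $\rho>0$, set $h_\rho:=\tanh^2\lp\frac{\rho}{2}\rp\in(0,1)$, and let $U_\rho:=\int_0^\infty\Ind_{\{r_s\le\rho\}}\,ds$ be the total occupation time of the geodesic ball $B_\rho$ of radius $\rho$ about the origin. Monotonicity of $\tanh^2$ gives $S(t)\ge h_\rho\int_0^t\Ind_{\{r_s>\rho\}}\,ds\ge h_\rho\lp t-U_\rho\rp$, hence $\E[e^{-\lambda_1 S(t)}]\le e^{-\lambda_1 h_\rho t}\,\E[e^{\lambda_1 h_\rho U_\rho}]$. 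The occupation time $U_\rho$ has a finite exponential moment of small enough order: $\E_x[U_\rho]$ is the Green potential of the bounded, compactly supported function $\Ind_{B_\rho}$ for the transient hyperbolic Brownian motion, hence bounded, so $M_\rho:=\sup_x\E_x[U_\rho]<\infty$, and Khasminskii's lemma yields $\E_x[e^{\theta U_\rho}]\le(1-\theta M_\rho)^{-1}$ whenever $\theta<1/M_\rho$. Replacing $\lambda_1$ by $\mu:=\min\{\lambda_1,\frac{1}{2M_\rho}\}$ (legitimate since $e^{-\lambda_1 S(t)}\le e^{-\mu S(t)}$) forces $\mu h_\rho M_\rho<1$, and combining the displays gives
\[
\Prob\lp\sigma_a>t\rp\le C_1\,\E\lb e^{-\mu S(t)}\rb\le\frac{C_1}{1-\mu h_\rho M_\rho}\,e^{-\mu h_\rho t}\qquad\text{for all }t\ge0,
\]
with all constants independent of $a$. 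This is the desired estimate, with $c=\mu h_\rho$, $C=C_1(1-\mu h_\rho M_\rho)^{-1}$, and any $T_0>0$.

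The main obstacle is the single analytic input used above, the exponential integrability of the occupation time $U_\rho$; the rest is bookkeeping with the time change and the independence of $W$ and $r$. I expect the Khasminskii/Green-potential argument to settle it cleanly, the only thing requiring care being the uniform bound $\sup_x\E_x[U_\rho]<\infty$. This follows from the explicit form of the hyperbolic Green's function (a locally integrable logarithmic singularity at the pole and exponential decay at infinity) together with the fact that the potential of $\Ind_{B_\rho}$ is continuous, harmonic outside $\overline{B_\rho}$, and vanishes at infinity, so it attains its supremum on the compact set $\overline{B_\rho}$.
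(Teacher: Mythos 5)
Your proof is correct, and it takes a genuinely different route from the paper's. The paper's argument is a soft compactness/Markov-chain argument: by H\"ormander's theorem the heat kernel $p_1$ is smooth and strictly positive, and because the vertical fiber is a circle, left-invariance reduces the probability of entering the complementary semicircle $V$ within one time unit to a continuous positive function of a single parameter ranging over the compact fiber, hence bounded below by some $c'>0$ uniformly; iterating via the Markov property then gives $\Prob(\sigma_a>n)\le(1-c')^n$. Your argument is instead a quantitative time-change argument: you lift $z_t$ to the line as a time-changed Brownian motion $W_{S(t)}$, observe that $\sigma_a$ is the image under $S^{-1}$ of the exit time of $W$ from an interval of fixed length $2\pi$ (this is precisely where compactness of the fiber enters in your version, and it parallels Remark~\ref{Rmk:Circle}), and then control $\E[e^{-\lambda S(t)}]$ by splitting off the occupation time $U_\rho$ of a bounded ball and invoking Khasminskii's lemma, with $\sup_x\E_x[U_\rho]<\infty$ following from transience of hyperbolic Brownian motion. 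The paper's route is shorter and more robust --- it carries over verbatim to $\operatorname{SU}(2)$ (Theorem~\ref{THM:SUVert}), where you would need an analogous but messier occupation-time analysis on the sphere --- whereas your route gives an explicit, computable exponent $c=\mu h_\rho$ and makes transparent the analogy with the $\widetilde{\operatorname{SL}(2)}$ case (same time change, one-sided hitting time versus two-sided exit). One place that deserves a sentence in a final write-up: the uniform-in-$a$ bound $\Prob(\sigma'_a>s)\le C_1 e^{-\lambda_1 s}$ for the exit time of the interval $(a-2\pi,a)$ from the starting point $0$ requires a constant $C_1$ independent of where $0$ sits inside the interval; this is standard (either a geometric-decay argument on time blocks of length $(2\pi)^2$, or a spectral expansion plus absorbing the prefactor for small $t$ into $C_1$), but since the point of this theorem is uniformity in $a$, it's worth making explicit.
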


\begin{proof}
Let $p_t (q_0,q_1)$ be the heat kernel on $\operatorname{SL}(2)$ (with respect to the Haar measure $\mu$), for time $t>0$ and points $q_0$ and $q_1$ in $\operatorname{SL}(2)$. Then $p_t$ is smooth and everywhere positive, by H\"ormander's theorem. Assume that $q_0\in \bH_+^2\times (a-2\pi,a)$ and let $V=  \bH_+^2\times \lp (-2\pi,a-2\pi)\cup (a,2\pi] \rp$. Then 
\[
p_1\lp q_0, V\rp = \int_{V}  p_1 (q_0,q)\, d\mu(q)
\]
is the probability that an $\operatorname{SL}(2)$-Brownian motion $B_t$, started from $q_0$ is in $V$ at time 1. By left-invariance, there exists $\alpha\in (-2\pi,2\pi]$ such that
\[
p_1\lp q_0, V\rp =  p_1\lp (0,0,0), \bH_+^2\times (\alpha,\alpha+2\pi) \rp 
\]
where the interval $(\alpha,\alpha+2\pi)$ is understood modulo $4\pi$ to denote a semi-circle in the vertical fiber. By the basic properties of the heat kernel just mentioned and the fact that $\alpha$ ranges over a compact (the circular vertical fiber), we see that there is some $c'>0$ such that the right-hand side is bounded below by $c'$ independent of $\alpha$, and it follows that $p_1\lp q_0, V\rp> c'$ for all $q_0$, independent of $a$.

Thus, if $B_t$ is a Brownian motion from $(0,0,0)$, the probability that it is not in $V$ at time 1 is less than or equal to $1-c'$, and by the Markov property, the probability that $B_i$ is not in $V$ for any $i\in\{1,2,\ldots, n\}$ is less than or equal to $(1-c')^n$. Now $V$ is chosen so that $\sigma_a$ is less or equal to the first hitting time of $V$ (and in fact they will be equal), so
\[
\Prob \lp \sigma_a>n\rp \leq \lp 1-c'\rp^n \quad\text{for all $n=1,2,\ldots$} 
\]
and this holds independently of $a$. Since $\Prob \lp \sigma_a>t\rp$ is non-increasing in $t$ and the coupling is maximal, the theorem follows by simple algebra.
\end{proof}

\subsection{Horizontal coupling on $\operatorname{SL}(2)$}

The horizontal coupling for $\operatorname{SL}(2)$ is the same as for $\widetilde{\operatorname{SL}(2)}$. Of course, the vertical coupling is faster for $\operatorname{SL}(2)$, but the speed of the vertical coupling isn't relevant for the results of Section \ref{Sect:SLU-H}, only the fact that the vertical coupling is successful. Thus  the same results hold, with essentially the same proofs (or by considering the behavior of the heat semigroup under taking a quotient of the vertical fiber), which we simply record here.

\begin{THM}
Theorem \ref{THM:SLU-Hor} holds with $\widetilde{\operatorname{SL}(2)}$ replaced by $\operatorname{SL}(2)$.
\end{THM}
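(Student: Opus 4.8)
The plan is to re-run the two-stage coupling of Section~\ref{Sect:SLU-H} essentially verbatim, the only structural change being that the vertical fiber is now the compact circle rather than $\bR$. The first thing I would record is that the $(r,\theta)$-marginal of an $\operatorname{SL}(2)$-Brownian motion is a Brownian motion on $\bH^2_+$, exactly as for $\widetilde{\operatorname{SL}(2)}$: this is immediate from the submersion structure, and concretely from the fact that the equations for $r_t$ and $\theta_t$ in \eqref{Eqn:SLSDEs} do not involve $z_t$ and hence are unaffected by reading $z_t$ modulo $4\pi$. Consequently the horizontal stage of the coupling --- the reflection coupling of the $(r,\theta)$-marginals on $\bH^2_+$ --- is literally the coupling of Lemma~\ref{Lem:HBM}, which therefore applies without change.

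For the lower bound, I would place the two projections at $(r,0)$ and $(-r,0)$ in the Poincar\'e disk by an isometry and compare the masses that $\Law(B_t)$ and $\Law(\tilde B_t)$ assign to the preimage of the half-disk $\{x>0\}$, i.e.\ to $\{(r,\theta,z)\in\operatorname{SL}(2):\theta\in(-\pi/2,\pi/2)\}$; since the horizontal marginals are genuine hyperbolic Brownian motions, the computation in the proof of Theorem~\ref{THM:SLU-Hor} (which uses only Lemma~\ref{Lem:HBM}) shows verbatim that $\frac{4}{\pi}\arctan\lp\tanh\lp\frac r2\rp\rp$ is a lower bound for $\TVd\lp\Law\lp B_t\rp,\Law\lp\tilde B_t\rp\rp$ at every time. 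For the matching upper bound I would run the two-stage coupling: reflection-couple the $(r,\theta)$-marginals, and once they agree, run the vertical reflection coupling of Theorem~\ref{THM:SLMax}. Given $\eps>0$, Lemma~\ref{Lem:HBM} gives a time $T_1$ by which the horizontal stage has succeeded with probability at least $1-\frac{4}{\pi}\arctan\lp\tanh\lp\frac r2\rp\rp-\eps$; on that event the two processes lie on a common vertical fiber with some random vertical displacement, which --- and this is the one genuine simplification relative to $\widetilde{\operatorname{SL}(2)}$ --- is automatically bounded because the fiber is compact, so no truncation to an interval $[-A,A]$ is needed. Then Theorem~\ref{THM:SVert}, whose constants are uniform in $a$, supplies a time $T_2$ after which the vertical stage from any such displacement has succeeded with probability at least $1-\eps$, so the whole coupling succeeds by $T_1+T_2$ with probability at least $1-\frac{4}{\pi}\arctan\lp\tanh\lp\frac r2\rp\rp-2\eps$. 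Letting $\eps\downarrow 0$ gives the matching upper bound on $\limsup_t\TVd$, hence the stated limit. (Alternatively, one could write the heat kernel on $\operatorname{SL}(2)$ as the $4\pi$-periodization in $z$ of the $\widetilde{\operatorname{SL}(2)}$ heat kernel and quote Theorem~\ref{THM:SLU-Hor} directly.)

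The semigroup estimate is then immediate from the total variation bound, and for the harmonic-function statement I would note that a bounded harmonic $f$ satisfies $P_tf=f$ for all $t$, so Theorem~\ref{THM:SVert} forces $f$ to be constant on vertical fibers; writing $f(r,\theta,z)=f_0(r,\theta)$, harmonicity of $f$ collapses, exactly as in Theorem~\ref{THM:SLU-Hor}, to $\lp\partial_r^2+\coth(r)\partial_r+\tfrac{1}{\sinh^2 r}\partial_\theta^2\rp f_0=0$, i.e.\ $f_0$ is a bounded harmonic function on $\bH^2_+$. I do not expect a real obstacle here: the only points requiring care are confirming that the horizontal marginal and the horizontal coupling are genuinely unchanged by passing to the quotient, and that the random vertical displacement feeding the second stage always lands where Theorem~\ref{THM:SVert} applies --- which it does, automatically, by compactness of the fiber.
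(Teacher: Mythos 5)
Your proposal is correct and follows essentially the same route as the paper, which simply observes that the horizontal stage is unchanged and that only the success (not the rate) of the vertical coupling from Theorem \ref{THM:SVert} is needed; your observation that compactness of the fiber removes the need for the $[-A,A]$ truncation is exactly the simplification the paper has in mind. (Your version also quietly fixes a small slip in the paper's proof of Theorem \ref{THM:SLU-Hor}, where the success probability by time $T_1$ should read $1-\frac{4}{\pi}\arctan\lp\tanh\lp\frac{r}{2}\rp\rp-\eps$, as you have it.)
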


\section{Coupling on $\operatorname{SU}(2)$}

\subsection{The sub-Riemannian geometry of $\operatorname{SU}(2)$}

The Lie group $\operatorname{SU}(2)$ is the group of $2\times 2$ complex unitary
matrices of determinant $1$, i.e.
\begin{align*}
G=\operatorname{SU}(2)=
\left\{\begin{pmatrix}
z_1 & z_2 \\
-\overline{z_2} & \overline{z_1}
\end{pmatrix}:z_1,z_2\in \mathbb{C},\vert z_1\vert^2+\vert z_2\vert^2=1 \right\}.   
\end{align*}
Its Lie algebra $\mathfrak{su}(2)$ consists of $2\times 2$ complex skew-adjoint
matrices with trace $0$. A basis of $\mathfrak{su}(2)$ is formed by the (normalized) Pauli matrices
\begin{align*}
X=\frac{1}{2}\begin{pmatrix}
0 & 1 \\ -1 & 0
\end{pmatrix}, \quad Y=\frac{1}{2}\begin{pmatrix}
0 & i \\ i & 0
\end{pmatrix}, \text{ and}\quad Z=\frac{1}{2}\begin{pmatrix}
i & 0 \\ 0 & -i
\end{pmatrix},
\end{align*}
for which the following relationships hold
\begin{align*}
[X,Y]=Z,\quad [Y,Z]=X, \text{ and}\quad [Z,X]=Y.
\end{align*}
We denote by $\widetilde{X}$, $\widetilde{Y}$, and $\widetilde{Z}$ the left-invariant vector fields on $\operatorname{SU}(2)$ corresponding to $X$, $Y$, and $Z$. Then $\operatorname{SU}(2)$ can be equipped with a natural sub-Riemannian structure $\left(\operatorname{SU}(2),\mathcal{H},\langle\cdot,\cdot\rangle_{\mathcal{H}}\right)$ where $\mathcal{H}_g=\operatorname{Span}\{\widetilde{X}(g),\widetilde{Y}(g)\}$ at any $g\in \operatorname{SU}(2)$ and $\{\widetilde{X},\widetilde{Y}\}$ forms an orthonormal frame for $\langle\cdot,\cdot\rangle_{\mathcal{H}}$. The sub-Laplacian on $\operatorname{SU}(2)$ has the form $\Delta_{\mathcal{H}}^{\operatorname{SU}(2)}=(\widetilde{X})^2+(\widetilde{Y})^2$, and the horizontal and vertical gradients are defined analogously to those of $\bH$.

We note that $\operatorname{SU}(2)$ is canonically identified with the set $\bS^3=\{x_1^2+x_2^2+x_3^2+x_4^2=1\}\subset \bR^4$. Also, $\tilde{Z}$ is tangent to the Hopf fibration of $\bS^3$ (see Example 1.3.5 in \cite{Petersen} and also Remark \ref{Rmk:Hopf} regarding the normalization), so the sub-Riemannian structure is compatible with a submersion.

Following \cite{MagalieAdapted}, we introduce cylindrical coordinates $(r, \theta, z)$ on $\operatorname{SU}(2)$ as
\[
\begin{bmatrix}
\cos \lp\frac{r}{2}\rp e^{i\frac{z}{2}} & \sin \lp\frac{r}{2}\rp e^{i\lp \theta-\frac{z}{2}\rp} \\
-\sin \lp\frac{r}{2}\rp e^{-i\lp \theta-\frac{z}{2}\rp}   &   \cos \lp\frac{r}{2}\rp  e^{-i\frac{z}{2}}   
\end{bmatrix}
\]
for $r\in[0,\pi]$, $\theta\in [0,2\pi]$, and $z\in (-2\pi,2\pi]$. Note that the submersion structure degenerates at $r=\pi$. Indeed, if we restrict $r$ to $ [0,\pi)$, then $(r,\theta)$ gives polar coordinates on $\bS^2$, centered at the north pole and omitting the south pole, which we denote by $S$ and which corresponds to $r=\pi$. Then $z\in (-2\pi,2\pi]$ is understood modulo $4\pi$ and gives a coordinate on the Hopf fibers, which are topological circles. Let $\tilde{S}\subset \operatorname{SU}(2)$ be the fiber over the south pole. The map $(r,\theta,z)\mapsto (r,\theta)$ gives the Hopf submersion, restricted to $\operatorname{SU}(2)\setminus\tilde{S}$. In particular, on $\operatorname{SU}(2)\setminus\tilde{S}$ we obtain a description of the sub-Riemannian structure directly analogous to what we had for $\operatorname{SL}(2)$. Any smooth curve on $\bS^2$ minus the south pole has a horizontal lift, uniquely determined by the lift $z_0\in (-2\pi,2\pi]$ of the initial point, given by the area swept out by the curve, relative to the north pole and modulo $4\pi$. Sub-Riemannian geodesics are given by the lifts of curves in $\bS^2$ with constant geodesic curvature. (For this description of the sub-Riemannian geodesics, we refer also to Chapter 1 of \cite{Montgomery}.)

As before, this description of horizontal curves via lifts of curves in $\bS^2$ extends to $\operatorname{SU}(2)$-Brownian motion. In fact, the situation is somewhat better, because Brownian motion on $\bS^2$ started from a point other than $S$ a.s.\ never hits $S$, and thus the $\operatorname{SU}(2)$ Brownian motion stays within $\operatorname{SU}(2)\setminus\tilde{S}$, which is the domain of the cylindrical coordinates, for all time, a.s. Thus, Brownian motion on $\operatorname{SU}(2)$ started anywhere other than $\tilde{S}$, written as $(r_0,\theta_0,z_0)$, is given by Brownian motion on $\bS^2$ from $(r_0,\theta_0)$, lifted by letting $z_t$ be the stochastic area starting from $z_0$. In particular,
$\operatorname{SU}(2)$-Brownian motion is given, in cylindrical coordinates, by the following system of SDEs,
\[
\begin{split}
dr_t &= dW_t^{(1)} +\frac{1}{2}\cot(r_t)\, dt \\
d\theta_t &= \frac{1}{\sin(r_t)}\, dW_t^{(2)} \\
dz_t&=  \tan\lp\frac{r_t }{2}\rp\, dW_t^{(2)} \, \mod 4\pi
\end{split}
\]
where $W_t^{(1)}$ and $W_t^{(2)}$ are independent one-dimensional Brownian motions. Further, we see that $z_t$ can be written as
\begin{equation}\label{Eqn:SUTime}
z_t = z_0+W_{\int_0^t \tan^2\lp\frac{r_s}{2}\rp \, ds}  \mod 4\pi
\end{equation}
where $W_t$ is a one-dimensional Brownian motion independent of $r_t$ (and $r_t$ is the radial process on the 2-sphere).

As indicated, Brownian motion on $\operatorname{SU}(2)$, and specifically, our construction of a coupling of Brownian motions, could be studied entirely in cylindrical coordinates, ignoring the missing fiber $\tilde{S}$. However, this not completely satisfactory, since it leaves the global geometry unclear; it's really the behavior at $\tilde{S}$ that gives the bundle its non-trivial topology.  Indeed, restricted to $\operatorname{SU}(2)\setminus \tilde{S}$, we can lift $z_t$ to $\bR$, just by passing to the universal cover of each fiber, or more coarsely, dropping the ``mod $4\pi$.'' In fact, this version of the stochastic area process was studied in \cite{FabriceWangArea}, which we discuss briefly below. However, this lift cannot extend to all of $\operatorname{SU}(2)$. This is related to the fact that $\operatorname{SU}(2)$ is already a simply-connected Lie group, unlike $\operatorname{SL}(2)$, so there is no further universal cover globally. More concretely, a curve on $\bS^2$ doesn't have a well-defined (real-valued) notion of enclosed area (say, that behaves well under deformation), which can be seen in part by the fact that there's no way to distinguish the area inside a simple closed curve from the area outside. However, taking the area modulo $4\pi$ resolves this issue. Motivated by these considerations, we give global descriptions of our construction in terms of the identification of $\operatorname{SU}(2)$ with $\{x_1^2+x_2^2+x_3^2+x_4^2=1\}\subset \bR^4$ in addition to the description in cylindrical coordinates on $\operatorname{SU}(2)\setminus\tilde{S}$.

\begin{remark}\label{Rmk:Hopf}
Following up the above and on Remark \ref{Rmk:Norm}, we note that it is common to choose the basis ${X,Y,Z}$ for the Lie algebra without the ``$\frac{1}{2}$'s'' in front of the matrices. This choice corresponds to the standard metric version of the Hopf fibration
\[
\bS^1\lp 1\rp \rightarrow \bS^3\lp 1\rp \rightarrow \bS^2\lp \frac{1}{2}\rp ,
\]
where $\bS^n(r)$ denotes the sphere of radius $r$. However, it means the submersion is onto the 2-sphere with curvature 4. The normalization we chose above rescales everything so that the submersion is onto $\bS^2\lp 1\rp$, the 2-sphere with curvature 1, which is natural for working in cylindrical coordinates. Our normalization does mean that, if we extend the sub-Riemannian metric to a Riemannian metric by declaring $\{X,Y,Z\}$ to be an orthonormal frame, then we get $\bS^3(2)$, that is, the round 3-sphere with constant (sectional) curvature 4.
\end{remark}

\subsection{The vertical coupling}
Just as for $\operatorname{SL}(2)$, we start at a Brownian motion at $(0,0,0)$ and we want to reflect once $z_t$ hits either $a$ or $a-2\pi$. This makes sense in cylindrical coordinates, but we wish to give a global description. To this end, let $ \dist_{\sR}$ denote the sub-Riemannian distance associated to the structure just described, and let $\dist_{\bS^3}$ denote the usual Riemannian distance on $\bS^3$.

The point is that the surface in  $\operatorname{SU}(2)\cong \bS^3\subset \bR^4$ corresponding to (the closure of) the set where $z= a \text{ or } a-2\pi$ in cylindrical coordinates is given by a great sphere in $\bS^3$ (that is, an equator). Moreover, that great sphere is equidistant from $(0,0,0)$ and $(0,0,2a)$, with respect to both the sub-Riemannian distance and the Riemannian distance. We make this precise as follows.

\begin{Lemma}\label{Lem:Hemi}
Consider two (distinct) points of $\operatorname{SU}(2)$ on the same Hopf fiber; without loss of generality, they may be taken as $q=(0,0,0)$ and $\tilde{q}=(0,0,2a)$ in cylindrical coordinates, for some $a\in(0,\pi]$. Then the set of equidistant points $S_a$ between $q$ and $\tilde{q}$ is the same under both the sub-Riemannian metric and the spherical Riemannian metric, that is
\[\begin{split}
S_a &= \lc p\in \operatorname{SU}(2) : \dist_{\sR}\lp (0,0,0), p\rp = \dist_{\sR}\lp (0,0,2a), p\rp \rc \\ 
&= \lc p\in \operatorname{SU(2)} : \dist_{\bS^3}\lp (0,0,0), p\rp = \dist_{\bS^3}\lp (0,0,2a), p\rp \rc ,
\end{split}\]
and moreover, $S_a$ is a great sphere in $\bS^3$, given as the intersection of $x_1^2+x_2^2+x_3^2+x_4^2=1$ with the hyperplane through the origin having normal vector $-\sin\lp \frac{a}{2}\rp \partial_{x_1}  +\cos\lp \frac{a}{2}\rp \partial_{x_2}$. The restriction of $S_a$ to $\operatorname{SU}(2)\setminus\tilde{S}$ is given in cylindrical coordinates by $\{z=a\}\cup\{z=a-2\pi\}$.
\end{Lemma}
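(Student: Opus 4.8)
The plan is to work in the model $\operatorname{SU}(2)\cong\bS^3=\{x_1^2+x_2^2+x_3^2+x_4^2=1\}$ obtained by writing $z_1=x_1+ix_2$, $z_2=x_3+ix_4$ in the matrix realization, so that in cylindrical coordinates $x_1+ix_2=\cos(r/2)e^{iz/2}$ and $x_3+ix_4=\sin(r/2)e^{i(\theta-z/2)}$; the two points then become $p:=(1,0,0,0)$ and $q:=(\cos a,\sin a,0,0)$. First I would settle the Riemannian side. Since $\dist_{\bS^3}(x,p)=\arccos\langle x,p\rangle$, the round bisector is $\bS^3\cap(p-q)^{\perp}$; from $p-q=2\sin(a/2)\,(\sin\tfrac a2,-\cos\tfrac a2,0,0)$ this is the great sphere with the stated normal $-\sin(a/2)\partial_{x_1}+\cos(a/2)\partial_{x_2}$. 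Substituting the cylindrical formulas gives $\langle x,p-q\rangle=2\sin(a/2)\cos(r/2)\sin\!\big(\tfrac{a-z}{2}\big)$, which, as $\sin(a/2)\ne0$, vanishes exactly when $\cos(r/2)=0$ (i.e.\ on $\tilde S$) or $\sin\!\big(\tfrac{a-z}{2}\big)=0$ (i.e.\ $z\in\{a,a-2\pi\}$, modulo $4\pi$). So on $\operatorname{SU}(2)\setminus\tilde S$ the round bisector is $\{z=a\}\cup\{z=a-2\pi\}$, and in any case it is the asserted great sphere; this proves all claims about $S_a$ measured with $\dist_{\bS^3}$.

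The substantive step is to identify the sub-Riemannian bisector with this set, and the key is to collapse both distance functions onto a single radial profile using the isometries of the structure. The lift of a rotation of $\bS^2$ about the polar axis, $(r,\theta,z)\mapsto(r,\theta+\alpha,z)$, fixes $p$ and $q$, so $\dist_{\sR}(p,\cdot)$ and $\dist_{\sR}(q,\cdot)$ are independent of $\theta$; write $\dist_{\sR}(p,(r,\theta,z))=D(r,z)$. Right translation by $\exp(-2aZ)$ is a sub-Riemannian isometry acting by $(r,\theta,z)\mapsto(r,\theta,z-2a)$ and sending $q$ to $p$, so $\dist_{\sR}(q,(r,\theta,z))=D(r,z-2a)$. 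Conjugation by $\exp(\pi X)$ is a sub-Riemannian isometry fixing $p$ and acting by $(r,\theta,z)\mapsto(r,-\theta,-z)$, which with $\theta$-independence shows $D(r,\cdot)$ is even (and it is of course $4\pi$-periodic); finally, the polar-axis rotations act transitively on $\tilde S$ while fixing $p$, so $\dist_{\sR}(p,\cdot)$ and $\dist_{\sR}(q,\cdot)$ are constant on $\tilde S$. Hence the sub-Riemannian bisector is $\{(r,\theta,z):D(r,z)=D(r,z-2a)\}$, and $S_a^{\bS^3}\subseteq S_a^{\sR}$ is immediate: if $z\equiv a$ or $z\equiv a-2\pi$ then $z-2a\equiv -z\pmod{4\pi}$, so $D(r,z-2a)=D(r,-z)=D(r,z)$, and on $\tilde S$ both distances are constant.

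For the reverse inclusion I would use that, for each $r\in[0,\pi)$, the map $z\mapsto D(r,z)$ is strictly increasing on $[0,2\pi]$. Granting this, $D(r,\cdot)$ is unimodal on the fibre (strict minimum at $z=0$, and by evenness strict maximum at $z=2\pi$), so $D(r,z)=D(r,z-2a)$ forces $z-2a\equiv -z\pmod{4\pi}$, i.e.\ $z\in\{a,a-2\pi\}$; combined with the previous paragraph this gives $S_a^{\sR}=S_a^{\bS^3}$ and, on $\operatorname{SU}(2)\setminus\tilde S$, the description $\{z=a\}\cup\{z=a-2\pi\}$. This strict monotonicity is the single genuinely geometric ingredient, and I expect it to be the main obstacle: by the submersion structure a length-minimizing horizontal curve from $p$ to a point on the fibre over the point at polar distance $r$ projects to a curve on $\bS^2$ from the north pole to that point enclosing a prescribed signed area, and the minimizers among such curves project to arcs of circles (constant geodesic curvature --- the spherical isoperimetric problem based at the north pole); an explicit computation with these arcs shows their length strictly increases with the enclosed area on $[0,2\pi]$, the base case being the explicit formula $D(0,z)=c\,\sqrt{z(4\pi-z)}$. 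This can be quoted from the known classification of sub-Riemannian geodesics on $\operatorname{SU}(2)$, or verified directly; everything else in the proof is bookkeeping with isometries.
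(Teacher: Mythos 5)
Your computation of the round bisector (via $p-q$ and the cylindrical substitution giving $\langle x,p-q\rangle=2\sin(a/2)\cos(r/2)\sin\!\left(\frac{a-z}{2}\right)$) agrees with the paper, which instead exhibits the reflection matrix on $\bR^4$; the two are equivalent. Where you genuinely depart from the paper is on the sub-Riemannian side, and here your treatment is actually \emph{more} careful than the paper's. The paper disposes of the sub-Riemannian bisector in one sentence (``As we see from the description of cylindrical coordinates above, the set of points equidistant\dots consists of the points of the form $(r,\theta,a)$ and $(r,\theta,a-2\pi)$''), without a supporting argument. As you correctly observe, the symmetries of the structure --- rotation invariance, right translation by $\exp(-2aZ)$, and the evenness of $D(r,\cdot)$ coming from conjugation by $\exp(\pi X)$ --- only yield the inclusion $\{z=a\}\cup\{z=a-2\pi\}\cup\tilde S\subseteq S_a^{\sR}$. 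The reverse inclusion, which is what makes ``bisector'' an honest characterization, requires the fibrewise strict monotonicity of $z\mapsto D(r,z)$ on $[0,2\pi]$, and you are right to flag this as the substantive geometric ingredient. It does hold (it follows from the classification of sub-Riemannian geodesics on $\operatorname{SU}(2)$ as lifts of constant--geodesic-curvature arcs on $\bS^2$, equivalently from the spherical isoperimetric problem with endpoint constraints; your base-case formula $D(0,z)=\sqrt{z(4\pi-z)}$ is correct with the paper's normalization), but you leave it as a citation rather than a proof. So what your route buys is a rigorous reduction to a single, well-identified monotonicity lemma, where the paper's own proof simply asserts the conclusion; the cost is that the monotonicity still needs a reference or a computation, which neither you nor the paper supplies in full.
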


\begin{proof}
As we see from the description of cylindrical coordinates above, for any fixed $a$, the set of points in $\operatorname{SU}(2)$ equidistant from $(0,0,0)$ and $(0,0,2a)$ in the sub-Riemannian metric consists of the points of the form $\lp r, \theta, a\rp$ and $\lp r, \theta, a-2\pi\rp$, at least on $\operatorname{SU}(2)\setminus\tilde{S}$. By the continuity of the sub-Riemannian distance, the extension to the fiber over the south pole consists in taking the closure of these two open surfaces. To describe this concretely, we see that the points of the form $\lp r, \theta, a\rp$ in cylindrical coordinates correspond to the points
\begin{equation}\label{Eqn:UpperGreatSphere}
 \lp \cos \lp\frac{r}{2}\rp \cos \lp\frac{a}{2}\rp, \cos \lp\frac{r}{2}\rp \sin \lp\frac{a}{2}\rp,  
 \sin \lp\frac{r}{2}\rp \cos \lp\theta-\frac{a}{2}\rp, \sin \lp\frac{r}{2}\rp \sin \lp\theta-\frac{a}{2}\rp \rp
\end{equation}
in $\bS^3\subset \bR^4$. Note that, for fixed $a$ (and $(r,\theta)\in[0,\pi]\times[0,2\pi]$), all such points lie in the hyperplane through the origin perpendicular to the vector $N_a=-\sin \lp\frac{a}{2}\rp\partial_{x_1} + \cos \lp\frac{a}{2}\rp\partial_{x_2}$, and thus all such points lie in the great sphere described in the lemma. Moreover, for $r\in[0,\pi)$ (and $\theta\in[0,2\pi)$), which is the domain of the cylindrical coordinates, we see that these points all have positive inner product with respect to the vector $\hat{N}_a=\cos\lp\frac{a}{2}\rp\partial_{x_1}+\sin\lp\frac{a}{2}\rp\partial_{x_2}$, and in fact parametrize the corresponding open hemisphere of the great sphere. Since $\sin(x-\pi)=-\sin x$ and $\cos(x-\pi) = -\cos x$, we see that points of the form $\lp r, \theta, a-2\pi \rp$ in cylindrical coordinates give the negatives of the points \eqref{Eqn:UpperGreatSphere}, and thus they parametrize the opposite open hemisphere of the great sphere perpendicular to $N_a$ (that is, the hemisphere of points having negative inner product with respect to $\hat{N}_a$). Letting $r=\pi$ gives the points
\[
\lp 0,0, \cos \lp\theta-\frac{a}{2}\rp, \sin \lp\theta-\frac{a}{2}\rp \rp
\]
which parametrize the equator of the great sphere (given as the points of the great sphere perpendicular to $\hat{N}_a$). These points are the closure of either of the open hemispheres just mentioned (note that replacing $a$ with $a-2\pi$ gives the same set of points as $\theta$ ranges from 0 to $2\pi$), and we also see that these points are exactly the Hopf fiber over the south pole. This last fact agrees with the fact that every point on the Hopf fiber over the north pole is the same distance (namely $\pi$) from every point on the Hopf fiber over the south pole.

Summarizing, we see that $S_a$ is exactly the great sphere perpendicular to $N_a$ as claimed, and that the intersection of $S_a$ with the domain of the cylindrical coordinate system is given by the points $\{z=a\}$ and $\{z=a-2\pi\}$. It remains to see that $S_a$ is also the set of points equidistant to $(0,0,0)$ and $(0,0,2a)$ under the Riemannian metric.

Note that the reflection through the plane normal to $N_a$ (and thus the reflection of $\bS^3$ through $S_a$) is given in Cartesian coordinates by
\[
\begin{bmatrix}
\cos a & \sin a &0&0 \\
\sin a & -\cos a  &0&0 \\
0&0 &1&0 \\
0&0 &0&1 
\end{bmatrix} .
\]
Since the points $(r,\theta,z)=(0,0,0)$ and $(r,\theta,z)=(0,0,2a)$ in cylindrical coordinates correspond to the points $\lp x_1,x_2,x_3,x_4\rp= (1,0,0,0)$ and $\lp x_1,x_2,x_3,x_4\rp= (\cos a,\sin a,0,0)$, respectively, in Cartesian coordinates for $\bR^4$, we see that these points are interchanged under this reflection. (Working entirely in cylindrical coordinates, this reflection takes the point $(r,\theta,z)$ to the point $(r,\theta+a-z,2a-z)$, and a second application takes this point back to $(r,\theta,z)$, of course. In particular, it takes $(0,0,0)$ to $(0,a,2a)$, which is identified with $(0,0,2a)$ in cylindrical coordinates.) It immediately follows that $S_a$ is also the set of points equidistant to $(0,0,0)$ and $(0,0,2a)$ under the Riemannian metric on $\bS^3$, completing the proof. 
\end{proof}

Because $S_a$ is a great sphere in $\operatorname{SU}(2)\cong \bS^3$, it cuts $\operatorname{SU}(2)$ into two open hemispheres. We let $S_a^-$ denote the open hemisphere consisting of points that have negative inner product with respect to $N_a$, which is also the hemisphere containing $(0,0,0)$. Similarly, we let $S_a^+$ denote the open hemisphere consisting of points that have positive inner product with respect to $N_a$, which is also the hemisphere containing $(0,0,2a)$. With this notation in place, we can state our main result for vertical reflection coupling in $\operatorname{SU}(2)$.

\begin{THM}\label{THM:SU2}
Consider two (distinct) points of $\operatorname{SU}(2)$ on the same Hopf fiber; without loss of generality, they may be taken as $(0,0,0)$ and $(0,0,2a)$ in cylindrical coordinates, for some $a\in(0,\pi]$. Let $B_t=\lp r_t,\theta_t,z_t\rp$ be an $\operatorname{SU}(2)$-Brownian motion started from $(0,0,0)$, let $\sigma_a$ be the first hitting time of $S_a$. Let $\tilde{B}_t=\lp \tilde{r}_t,\tilde{\theta}_t,\tilde{z}_t\rp$ be given by
\[
\lp\tilde{r}_t,\tilde{\theta}_t,\tilde{z}_t\rp = \begin{cases}
\lp R_{\lp r_{\sigma_a} ,\theta_{\sigma_a}\rp} (r_t,\theta_t), 2a-z_t\rp  & \text{for $t\leq \sigma_a$} \\
(r_t,\theta_t,z_t)  & \text{for $t> \sigma_a$}
\end{cases}
\quad \text{with $z_t$ and $\tilde{z}_t$ understood modulo $4\pi$}
\]
(where $R_{(r,\theta)}$ is the reflection of $\bS^2$ as in Lemma \ref{Lem:BM}). Then $\tilde{B}_t $ is an $\operatorname{SU}(2)$-Brownian motion started from $(0,0,2a)$ coupled with $B_t$ in such a way that the coupling time is equal to $\sigma_a$. Further, $\sigma_a$ is a.s.\ finite (so that the coupling is successful), this coupling is maximal, and the diffusion satisfies a reflection principle with respect to $S_a$, so that
\[
\Prob\lp \sigma_a>t \rp = 1-2\Prob\lp B_t\in S_A^+\rp =1-2\Prob\lp z\lp B_t\rp \in \lb -2\pi, a-2\pi \rp\cup \lp a, 2\pi\rb\rp.
\]
\end{THM}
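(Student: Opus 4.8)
The plan is to follow closely the proof of Theorem~\ref{THM:SLMax} (the $\operatorname{SL}(2)$ case, which likewise has a circular fiber), using Lemma~\ref{Lem:Hemi} to supply the global geometry of the coupling isometry. The first step is to verify that $\sigma_a$ is a.s.\ finite and that $\tilde{B}_t$ is well defined. Since $\bS^2$-Brownian motion started off the south pole a.s.\ never reaches it and keeps $r_t\in(0,\pi)$ for all $t>0$, the clock $S(t)=\int_0^t\tan^2(r_s/2)\,ds$ appearing in \eqref{Eqn:SUTime} is continuous and strictly increasing, and $S(t)\to\infty$ a.s.\ because the recurrent $\bS^2$-motion spends unbounded total time away from both poles. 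Hence the real-valued lift $W_{S(t)}$ of $z_t$ a.s.\ leaves the semicircular interval $(a-2\pi,a)$ in finite time, and by Lemma~\ref{Lem:Hemi} this exit time is exactly the first hitting time $\sigma_a$ of $S_a$ (the $\bS^2$-motion never reaching $\tilde{S}$, so cylindrical coordinates apply throughout). Since $r_{\sigma_a}>0$ a.s., the reflection $R_{(r_{\sigma_a},\theta_{\sigma_a})}$ of $\bS^2$ is well defined, and so is $\tilde{B}_t$.

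Next I would check that the construction glues at $t=\sigma_a$. There $z_{\sigma_a}\in\{a,a-2\pi\}$, and $2a-z_{\sigma_a}$ equals $a$ in the first case and $a+2\pi\equiv a-2\pi\pmod{4\pi}$ in the second, so $\tilde{z}_{\sigma_a}=z_{\sigma_a}$; together with the fact that $(r_{\sigma_a},\theta_{\sigma_a})$ is fixed by $R_{(r_{\sigma_a},\theta_{\sigma_a})}$, this gives continuity of $\tilde{B}_t$ at $\sigma_a$ and $B_{\sigma_a}=\tilde{B}_{\sigma_a}$, while for $t<\sigma_a$ the points $z_t$ and $\tilde{z}_t=2a-z_t$ lie in complementary open semicircles, so the processes have not yet met; hence $\sigma_a$ is the coupling time. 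That $\tilde{B}_t$ is an $\operatorname{SU}(2)$-Brownian motion from $(0,0,2a)$ then follows from Lemma~\ref{Lem:BM} applied on $\bS^2$ --- whose proof only uses that the stopping position has rotationally symmetric law, which holds here because the exit condition on the stochastic area is rotation-invariant --- together with the sign reversal of the stochastic area under a reflection of $\bS^2$.

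For the reflection principle and maximality I would argue as in the proofs of Theorems~\ref{THM:HMax} and~\ref{THM:SLMax}. The strong Markov property of $(r_t,z_t)$ and \eqref{Eqn:SUTime} give the analogue of \eqref{Eqn:ZSymmetry}: $z_{\sigma_a+s}$ is symmetric under reflection through $z_{\sigma_a}$, which, since $z_{\sigma_a}\in\{a,a-2\pi\}$, is the map $z\mapsto 2a-z\pmod{4\pi}$ that interchanges $S_a^-$ and $S_a^+$. Since $B_t$ has a smooth density, $\Prob(B_t\in S_a)=0$ for each fixed $t$, and $B_t\in S_a^-$ whenever $\sigma_a>t$; combining these gives $\Prob(B_t\in S_a^+)=\tfrac12\Prob(\sigma_a\le t)$, which rearranges to the stated reflection principle (the final identity being the cylindrical-coordinate description of $S_a^+$ from Lemma~\ref{Lem:Hemi}, valid up to the null set $S_a\cup\tilde{S}$). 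For maximality, take $U=S_a^-$: the computation \eqref{Eqn:Max} goes through verbatim, since the contributions from $\{\sigma_a\le t\}$ cancel (there $B_t=\tilde{B}_t$) while on $\{\sigma_a>t\}$ one has $B_t\in S_a^-$ and $\tilde{B}_t\in S_a^+$; this yields $\Law(B_t)(U)-\Law(\tilde{B}_t)(U)=\Prob(\sigma_a>t)$, which together with the Aldous inequality \eqref{Eqn:Aldous} (and $\tau=\sigma_a$) forces equality in total variation.

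I expect the only genuinely delicate point to be the invocation of Lemma~\ref{Lem:BM}: it is stated for $\sigma_a$ equal to the first time the stochastic area hits a \emph{level}, whereas here $\sigma_a$ is the first \emph{exit} time of the semicircular interval $(a-2\pi,a)$ by the lifted stochastic area. One must check that the rotational-symmetry and conditioning argument in the proof of Lemma~\ref{Lem:BM} still applies to this stopping time --- it does, because rotating the base sphere leaves the stochastic area, and hence this stopping time, unchanged --- and carefully track the mod-$4\pi$ bookkeeping at the seam $t=\sigma_a$ (as in the proof of Theorem~\ref{THM:SLMax}).
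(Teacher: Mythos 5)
Your proposal is correct and follows essentially the same route as the paper, which likewise reduces to the argument for Theorem \ref{THM:SLMax} via Lemma \ref{Lem:BM}, Lemma \ref{Lem:Hemi}, the mod-$4\pi$ bookkeeping at $t=\sigma_a$, the post-$\sigma_a$ symmetry of $z_t$ from \eqref{Eqn:SUTime}, and the choice $U=S_a^-$ in \eqref{Eqn:Max}. Your explicit remark that Lemma \ref{Lem:BM} must be rechecked for the exit time of a semicircle rather than a level-hitting time (and that rotational invariance of this stopping time makes the conditioning argument go through) is a point the paper leaves implicit, and it is handled correctly.
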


\begin{remark}
For any $a\in(0,\pi]$ and $b\in[0,2\pi)$, consider the map given in cylindrical coordinates by $(r,\theta,z)$ to $(r,2b-\theta,2a-z)$ and denote it by $T_{a,b}$. If $b=\theta_{\sigma_a}$, this is the map used to construct $\tilde{B}_t$, so that
\[
\tilde{B}_t=T_{a,\theta_{\sigma_a}} B_t \quad \text{on $\{t\leq \sigma_a\}$.}
\]
Writing $T_{a,b}$ in Cartesian coordinates on $\bR^4$, we see that it is given by the matrix
\[
\begin{bmatrix}
\cos a & \sin a &0&0 \\
\sin a & -\cos a  &0&0 \\
0&0 &\cos (2b-a)&\sin (2b-a) \\
0&0 &\sin (2b-a) &-\cos (2b-a)
\end{bmatrix} .
\]
This is easily seen to be an element of $\operatorname{SO}(4)$, the square of which is the identity. Moreover, a modest computation verifies that it preserves the Hopf fibration. So $T_b$ is a (globally well-defined) sub-Riemannian isometry of $\operatorname{SU}(2)$, consistent with the fact that it maps an $\operatorname{SU}(2)$-Brownian motion to another $\operatorname{SU}(2)$-Brownian motion (that is, potentially started from a different initial point). Of course, $T_{a,\theta_{\sigma_a}}$ corresponds to $I_{B_{\sigma_a}}$ in Theorem \ref{THM:SummaryCoupling}.
\end{remark}

\begin{proof}
The proof is almost identical to that of Theorem \ref{THM:SLMax}. The reflection on fibers works the same way as there, so in light of Lemma \ref{Lem:BM} (plus Remark \ref{Rmk:Sigma}), Lemma \ref{Lem:Hemi}, and the geometry of the cylindrical coordinates above, we see that $\tilde{B}_t$ is an $\operatorname{SU}(2)$-Brownian motion that couples with $B_t$ at $\sigma_a$. Prior to $\sigma_a$, $B_t$ is in $S_a^-$ and $\tilde{B}_t$ is in $S_a^+$, and after $\sigma_a$, $B_t$ is equally likely to be in either, by the strong Markov property and \eqref{Eqn:SUTime} (or Lemma \ref{Lem:Hemi})). This gives the reflection principle, and maximality follows by letting $U=S_a^-$ in \eqref{Eqn:Max}.
\end{proof}

Using the compactness of the fiber, we have the direct analogue of Theorem \ref{THM:SVert}, with essentially the same proof.

\begin{THM}\label{THM:SUVert}
Let everything be as in Theorem \ref{THM:SU2}. Then there exist constants $C>0$, $c>0$, and $T_0>0$ such that, for any such points $q$ and $\tilde{q}$ (equivalently for any $a\in (0,4\pi)$), 
\[
\Prob\lp \sigma_a>t \rp =\TVd \lp \Law\lp B_t\rp, \Law\lp \tilde{B}_t\rp \rp \leq Ce^{-ct} \quad \text{for all $t>T_0$.}
\]
\end{THM}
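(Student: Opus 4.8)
The plan is to run the same soft compactness argument used to prove Theorem~\ref{THM:SVert}, now exploiting the compactness of $\operatorname{SU}(2)\cong\bS^3$ itself (rather than merely that of a circular vertical fiber), which in fact makes the uniformity in $a$ completely transparent. First I would invoke H\"ormander's theorem: the heat kernel $p_t(q_0,q_1)$ of $\operatorname{SU}(2)$-Brownian motion with respect to Haar measure $\mu$ is smooth and strictly positive for every $t>0$ and all $q_0,q_1$. Since $\operatorname{SU}(2)$ is compact, $p_1$ is therefore bounded below by a positive constant $m$ on $\operatorname{SU}(2)\times\operatorname{SU}(2)$.

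The key point is that, by Lemma~\ref{Lem:Hemi}, the set $S_a^+$ is always an open hemisphere of $\bS^3$, so $\mu(S_a^+)=\frac12\mu(\operatorname{SU}(2))$ for every admissible $a$ and every pair of base points (the boundary $S_a$ having measure zero). Hence, for any starting point $q_0$,
\[
\Prob\lp B_1\in S_a^+ \mid B_0=q_0 \rp = \int_{S_a^+} p_1(q_0,q)\,d\mu(q)\ \geq\ \frac{m}{2}\,\mu(\operatorname{SU}(2)) =: c' >0 ,
\]
where $c'$ depends neither on $q_0$ nor on $a$; this is the analogue of the constant $c'$ appearing in the proof of Theorem~\ref{THM:SVert}. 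Then I would iterate exactly as there. Since $B_0=(0,0,0)\in S_a^-$ and the paths are continuous, on the event $\{\sigma_a>n\}$ the process has not hit the great sphere $S_a$ by time $n$ and hence remains in $S_a^-$, so in particular $B_i\notin S_a^+$ for $i=1,\ldots,n$; by the Markov property together with the displayed bound, $\Prob\lp \sigma_a>n\rp\leq(1-c')^n$ for all $n=1,2,\ldots$, uniformly in $a$. Because $t\mapsto\Prob(\sigma_a>t)$ is non-increasing, this upgrades by simple algebra to $\Prob(\sigma_a>t)\leq Ce^{-ct}$ for all $t>0$ with suitable positive constants $C$ and $c$ independent of $a$ (so any $T_0>0$ works). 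Finally, the maximality of the coupling established in Theorem~\ref{THM:SU2} identifies $\Prob(\sigma_a>t)$ with $\TVd\lp\Law(B_t),\Law(\tilde B_t)\rp$, which completes the argument.

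There is no serious obstacle here — the whole argument is soft — but the one point worth checking is the claimed uniformity of $C$, $c$, $T_0$ in $a$. In the present setting this is essentially automatic: every equidistant set $S_a$ is a great sphere, so every $S_a^+$ carries exactly half of the total mass, and the lower bound $c'$ above therefore needs no further symmetry reduction (in contrast, say, to a left-translation step one would have to invoke if the measure of the target set varied with $a$).
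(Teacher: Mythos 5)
Your proposal is correct and runs on essentially the same soft compactness-and-iteration idea the paper uses for Theorem~\ref{THM:SVert}, which the paper then says to transport to $\operatorname{SU}(2)$ ``with essentially the same proof.'' The one genuine refinement you introduce is worth noting: since $\operatorname{SU}(2)\cong\bS^3$ is itself compact, you can bound $p_1$ below uniformly on $\operatorname{SU}(2)\times\operatorname{SU}(2)$ and combine this with the observation from Lemma~\ref{Lem:Hemi} that $S_a^+$ is always an open hemisphere (hence $\mu(S_a^+)=\tfrac12\mu(\operatorname{SU}(2))$ for every $a$); this gives the constant $c'$ directly and uniformly in $a$ without the left-invariance reduction to the origin that the $\operatorname{SL}(2)$ proof needs because $\bH_+^2$ is non-compact. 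The iteration via the Markov property and the use of maximality to identify $\Prob(\sigma_a>t)$ with the total variation distance are exactly as in the paper. So the route is the same in spirit; your version is a slightly cleaner implementation made possible by the global compactness of $\operatorname{SU}(2)$ and the hemisphere symmetry.
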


\subsection{Horizontal coupling on $\operatorname{SU}(2)$}
Again, for points not on the same fiber, we use a two-stage coupling. If $q$ and $\tilde{q}$ of $\operatorname{SU}(2)$ are any two points on different fibers, we first consider their projections onto $\bS^2$ under the submersion, and let $2r$ be the distance between them (as points on $\bS^2$). We couple them via the usual (Markov) reflection coupling on $\bS^2$, with the $z$-processes being whatever is induced by the joint evolution of the $\bS^2$-marginals, until the first time the $\bS^2$-marginals meet. We then run the vertical coupling just developed, starting from the random vertical displacement coming from the first stage.

\begin{THM}
Consider the two-stage coupling just described. There exist constants $K>0$, $k>0$, and $T_0>0$ such that, for any two points $q$ and $\tilde{q}$ of $\operatorname{SU}(2)$, the coupling time $\tau$ of Brownian motions $B_t$ and $\tilde{B}_t$ from $q$ and $\tilde{q}$ respectively, under the two-stage coupling, satisfies
\[
\Prob\lp \tau>t \rp < K e^{-kt} \quad \text{for all $t>T_0$.}
\]
\end{THM}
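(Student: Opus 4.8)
The plan is to combine the exponential tail of the vertical coupling time from Theorem~\ref{THM:SUVert} with an analogous uniform exponential tail for the horizontal stage, the latter coming from the compactness of $\bS^2$ by the same soft argument used to prove Theorem~\ref{THM:SVert}.

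First I would control the horizontal stage. If $q$ and $\tilde q$ lie on the same Hopf fiber there is nothing to do and the claim is exactly Theorem~\ref{THM:SUVert}, so assume their projections to $\bS^2$ are distinct, at distance $2r>0$, and let $\sigma_r'$ be the first time the reflection-coupled $\bS^2$-marginals meet, equivalently the first hitting time of the bisecting great circle $C\subset\bS^2$ by the $\bS^2$-Brownian motion $(r_t,\theta_t)$. Since all great circles of $\bS^2$ are isometric and the heat kernel of $\bS^2$ is continuous and everywhere positive on the compact product $\bS^2\times\bS^2$, the same heat-kernel-lower-bound argument as in the proof of Theorem~\ref{THM:SVert} produces a $\delta\in(0,1)$, independent of the two starting points (hence of $r$ and of $C$), such that an $\bS^2$-Brownian motion from any point meets $C$ before time $1$ with probability at least $\delta$. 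Iterating with the Markov property gives $\Prob(\sigma_r'>n)\le(1-\delta)^n$ for every $n\in\mathbb{N}$, and hence
\[
\Prob(\sigma_r'>t)\le K_1 e^{-k_1 t}\qquad\text{for all }t\ge 0,
\]
for some $K_1>0$ and $k_1>0$ that do not depend on $q,\tilde q$. In particular $\sigma_r'<\infty$ a.s.

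Next I would splice in the vertical stage. At time $\sigma_r'$ the two processes lie on a common Hopf fiber with some random vertical displacement $2a(\sigma_r')$, and by the strong Markov property the remainder of the two-stage coupling is the vertical reflection coupling of Theorem~\ref{THM:SU2} started from that displacement, so $\tau=\sigma_r'+\sigma_{a(\sigma_r')}$. The essential point is that the exponential bound of Theorem~\ref{THM:SUVert} holds \emph{uniformly in the displacement}: there are constants $C,c,T_0>0$, independent of $a$, with $\Prob(\sigma_{a(\sigma_r')}>s \mid \mathcal F_{\sigma_r'})\le Ce^{-cs}$ for every $s>T_0$. Splitting at $t/2$, for $t>2T_0$ we get
\[
\Prob(\tau>t)\le\Prob(\sigma_r'>t/2)+\Prob(\sigma_r'\le t/2,\ \sigma_{a(\sigma_r')}>t-\sigma_r')\le K_1 e^{-k_1 t/2}+Ce^{-ct/2},
\]
where on $\{\sigma_r'\le t/2\}$ we used $t-\sigma_r'\ge t/2>T_0$ together with the conditional bound. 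Taking $K=K_1+C$, $k=\frac12\min\{k_1,c\}$, and any threshold $\ge 2T_0$ for $T_0$ yields $\Prob(\tau>t)<Ke^{-kt}$, as claimed.

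The main thing to get right is not any single estimate but the \emph{uniformity} of both exponential bounds. The tail bound on $\sigma_r'$ must be uniform over all pairs of starting points on $\bS^2$ so that it applies to every $q,\tilde q$, and --- more delicately --- the tail bound on the vertical coupling time must be uniform over the vertical displacement, since that displacement is random and we control it only through its lying in the compact fiber; this uniformity is precisely what Theorem~\ref{THM:SUVert} provides, and it is what lets the crude $t/2$ split combine the two exponential rates without any need to know which one dominates.
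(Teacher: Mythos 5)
Your proof is correct and follows essentially the same route as the paper: a uniform exponential tail for the horizontal reflection coupling on $\bS^2$ (via the heat-kernel lower bound, as in Theorem~\ref{THM:SVert}), combined with the uniform-in-$a$ exponential tail for the vertical stage from Theorem~\ref{THM:SUVert} and the conditional independence of the two stages given $a(\sigma_r')$. The only difference is cosmetic: you spell out the $t/2$ split to combine the two exponential rates, where the paper simply asserts that the sum of two random variables with uniform exponential tails has an exponential tail.
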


\begin{proof}
It is standard that, for any two points of $\bS^2$, the coupling time $\sigma'_r$ (under reflection coupling) decays exponentially. Indeed, by smoothness and compactness, the heat kernel at time one, $p^{\bS^2}_1(x,y)$, is bounded from below by a positive constant independent of $x,y\in \bS^2$. Then the exponential decay follows as in the proof of Theorem \ref{THM:SVert}. Alternatively, the reflection coupling on $\bS^2$ is maximal (on $\bS^2$), as discussed in Section \ref{Sect:Intro1}, and considering the eigenfunction expansion of the heat kernel, it follows that the coupling time has to decay exponentially with a rate given by the spectral gap of $\bS^2$.

Further, the second stage of the coupling is conditionally independent of the first given the vertical displacement when the $\bS^2$-marginals couple, which we can write as $2a\lp \sigma'_r \rp$. Thus $\tau$ is equal in distribution to $\sigma'_r +\sigma_{a\lp \sigma'_r \rp}$, and since each of these satisfies a uniform exponential decay bound, so does their sum.
\end{proof}

\begin{remark}
We note that \cite{FabriceMichel} computes the sharp exponential decay rate using spectral methods, in the context of establishing functional inequalities on $\operatorname{SU}(2)$. Adjusting for their normalization, we see that their results give the optimal asymptotic decay rate (up to a constant factor) as $e^{-\frac{1}{4}t}$. Note that the decay rate for the reflection coupling on $\bS^2$ is $e^{-t}$, which is better than the the two-stage coupling on the total space. This means that, in contrast to the Heisenberg groups and $\operatorname{SL}(2)$, the vertical coupling is slower than the horizontal coupling, which is presumably a consequence of the positive curvature of $\bS^2$.

As noted in Section \ref{Sect:ResSum}, it would be desirable to compute the sharp exponential rate in Theorem \ref{THM:SUVert}, and then the joint distribution of $\sigma'_r$ and $a\lp \sigma'_r \rp$, analogously to \cite{BGM}, in order to recover, and possibly refine, the result of \cite{FabriceMichel}.
\end{remark}

\begin{remark}
Looking back, it's clear that the sets $\{z=a\}$ (and $\{z=a\}\cup\{z=a-2\pi\}$ in the case of $\operatorname{SL}(2)$) give the set of equidistant points between $(0,0,0)$ and $(0,0,2a)$ in the all of the cases previously considered, and the vertical coupling is successful when one (hence both) of the particles hits this submanifold of equidistant points between the starting points. In the previous cases, the existence of global coordinates made such an invariant description unnecessary, but for $\operatorname{SU}(2)$, it is the natural way to characterize the vertical reflection coupling. Moreover, describing the vertical reflection coupling as ``coupling upon the first hitting time of the equidistant sub-manifold'' further highlights the analogy with the Markovian maximal reflection couplings in the Riemannian case.
\end{remark}


\section{Non-isotropic Heisenberg groups}

\subsection{Preliminaries}

We consider a family of non-isotropic Heisenberg groups of a symplectic space $\left( \mathbb{R}^{2n}, \omega \right)$ defined as follows.

\begin{definition}\label{defn.NonisotropicHeisenbergGroup}
A \emph{non-isotropic Heisenberg group} $\mathbb{H}^{n}_{\omega}$ is the set $\mathbb{R}^{2n}\times\mathbb{R}$ equipped with the group law given by

\begin{align}\label{GroupLaw}
& \left( \mathbf{v}, z \right)\star \left( \mathbf{v}^{\prime}, z^{\prime} \right)=\left(\mathbf{v}+\mathbf{v}^{\prime},z+z^{\prime}+\frac{1}{2}\omega\left( \mathbf{v}, \mathbf{v}^{\prime} \right)\right),
\\
& \mathbf{v}=\left( x_{1},y_{1},\cdots,x_{n},y_{n}  \right), \mathbf{v}^{\prime}=\left( x_{1}^{\prime},y_{1}^{\prime},\cdots,x_{n}^{\prime}, y_{n}^{\prime} \right) \in \mathbb{R}^{2n},
\notag
\\
& \omega: \mathbb{R}^{2n} \times \mathbb{R}^{2n} \longrightarrow \mathbb{R},
\notag
\end{align}
where $\omega\left( \mathbf{v}, \mathbf{v}^{\prime} \right):=\sum_{i=1}^{n} \alpha_i\left(x_{i}y_{i}^{\prime}-x_{i}^{\prime}y_{i}\right)$ is a  \emph{symplectic form} on $\mathbb{R}^{2n}$ and $\alpha_{1}, \alpha_{2}, \cdots, \alpha_{n}$ are positive constants indexed in such a way that
\begin{align*}
0<\alpha_{1}\leq \alpha_{2}\leq\cdots\leq \alpha_{n}.
\end{align*}
\end{definition}
Note that any non-degenerate symplectic form on $\mathbb{R}^{2n}$, that is, a bilinear anti-symmetric form, can be written as a sum of symplectic forms on $\mathbb{R}^{2}$, as was described in  \cite[Appendix]{GordinaLuo2022}. In particular, this explains why such groups are referred to as non-isotropic.

If $\alpha_{1}=\cdots=\alpha_{n}=1$, we get the standard $2n+1$-dimensional Heisenberg group. Sometimes the parametrization $\alpha_{1}=\cdots=\alpha_{n}=4$ is used for the standard Heisenberg group. These are all isotropic Heisenberg groups referring to the fact that the corresponding symplectic space is isotropic as described in \cite[Appendix]{GordinaLuo2022}.

Consider the following left-invariant vector fields on $\mathbb{H}^{n}_{\omega}$ identified with differential operators on $\mathbb{R}^{2n+1}$ by
\begin{align}\label{e.CanonicalBasis}
& X_i^{\omega}\left( g \right)=\partial _{x_i}-\frac{\alpha_i}{2}y_i\partial_{z}, \, Y_i^{\omega}\left( g \right)=\partial_{y_i}+\frac{\alpha_i}{2}x_i\partial_{z}, \hskip0.1in i=1,\cdots,n, \, Z^{\omega}\left( g \right)=\partial_{z} \notag
\end{align}
for any $g=(x_{1},y_{1},\cdots,x_{n},y_{n},z)\in\mathbb{H}^{n}_{\omega}$. Note that the only non-zero Lie brackets for  left-invariant vector fields $X_i^{\omega}$ and $Y_i^{\omega}$ are
\[
[X_i^{\omega},Y_i^{\omega}]=\alpha_i Z^{\omega},  i=1,...,n,
\]
so the vector fields $\{X_i^{\omega}, Y_i^{\omega}, i=1, \cdots, n \}$ and their Lie brackets span the tangent space at every point, and therefore H\"{o}rmander's condition is satisfied.

This implies  that the group $\mathbb{H}^{n}_{\omega}$ has a natural sub-Riemannian structure $\left(\mathbb{H}^{n}_{\omega}, \mathcal{H}^{\omega}, \langle \cdot, \cdot \rangle^{\omega}_{\mathcal{H}}\right)$, where
\[
\mathcal{H}^{\omega}=\mathcal{H}_{g}^{\omega}= \operatorname{Span}\{X_i^{\omega} \left( g \right), Y_i^{\omega}\left( g \right), i=1,\cdots,n\}
\]
is the \emph{horizontal distribution} and the left-invariant inner product $\langle\cdot,\cdot\rangle_{\mathcal{H}^{\omega}}$ is chosen in such a way that  $\{X_i^{\omega},Y_i^{\omega}:i=1,\cdots,n\}$ is an orthonormal frame for the sub-bundle $\mathcal{H}^{\omega}$. Note that both the vector space $\mathcal{H}^{\omega}_g$ and the left-invariant sub-Riemannian metric $\langle \cdot, \cdot \rangle^{\omega}_{\mathcal{H}}=\langle \cdot, \cdot \rangle^{\omega}_{\mathcal{H}^{\omega}}$ depend on the symplectic form $\omega$.

By the classical result in \cite{Hormander1967a} H\"{o}rmander's condition implies that the \emph{sub-Laplacian}
\[
\Delta_{\mathcal{H}}^{\omega}:=\sum_{i=1}^{n} \left(\left(X_i^{\omega}\right)^2+\left(Y_i^{\omega}\right)^2\right)
\]
is a hypoelliptic operator.

There is a natural sub-Riemannian distance $\dist_{sR}$ on $\mathbb{H}^n_{\omega}$ (see \cite[Section 5.2]{BonfiglioliLanconelliUguzzoniBook} for more details). In this case, $\dist_{sR}$ is a left-invariant metric on $\mathbb{H}^n_{\omega}$ and $\dist_{sR}\lp e,\cdot\rp=\dist_{sR}\lp \cdot,e\rp:\mathbb{H}^n_{\omega} \rightarrow [0,+\infty)$ is a homogeneous norm (see \cite[Definition 5.1.1]{BonfiglioliLanconelliUguzzoniBook} for precise definition). By \cite[Proposition 5.1.4]{BonfiglioliLanconelliUguzzoniBook}, all homogeneous norms on $\mathbb{H}^n_{\omega}$ are equivalent. Here for convenience, we will work with another homogeneous norm instead of $\dist_{sR}\lp e,\cdot\rp=\dist_{sR}\lp \cdot,e\rp$.

Define $\Vert \cdot \Vert_{\omega}:\mathbb{H}^n_{\omega} \rightarrow [0,+\infty)$ by
\begin{align*}
\Vert g \Vert_{\omega} =\Vert \lp x_{1},y_{1},\cdots,x_{n},y_{n},z \rp \Vert_{\omega} =\lp \sum_{i=1}^n \lp \lp x_i\rp^2+ \lp y_i\rp^2\rp + \vert z \vert \rp^{\frac{1}{2}}
\end{align*}
for any $\lp x_{1},y_{1},\cdots,x_{n},y_{n},z \rp\in \mathbb{H}^n_{\omega}$.

\begin{proposition} \label{prop.HomogeneousNorm}
The continuous function $\Vert \cdot \Vert_{\omega}:\mathbb{H}^n_{\omega} \rightarrow [0,+\infty)$ is a homogeneous norm on $\mathbb{H}^n_{\omega}$.
\end{proposition}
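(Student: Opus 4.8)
The plan is to verify directly the three conditions in \cite[Definition 5.1.1]{BonfiglioliLanconelliUguzzoniBook} defining a homogeneous norm: continuity on $\mathbb{H}^{n}_{\omega}$, strict positivity away from the identity (with vanishing at the identity), and homogeneity of degree one under the canonical dilations. I will also note symmetry under inversion, since some references include it.

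First I would record the dilation structure on $\mathbb{H}^{n}_{\omega}$. Since the group law \eqref{GroupLaw} depends quadratically on the $\mathbf{v}$-variables through $\omega$, the one-parameter family of dilations is $\delta_{\lambda}\lp x_{1},y_{1},\dots,x_{n},y_{n},z\rp = \lp \lambda x_{1},\lambda y_{1},\dots,\lambda x_{n},\lambda y_{n},\lambda^{2}z\rp$ for $\lambda>0$, and these are group automorphisms because $\omega\lp\lambda\mathbf{v},\lambda\mathbf{v}^{\prime}\rp=\lambda^{2}\omega\lp\mathbf{v},\mathbf{v}^{\prime}\rp$. In particular, each auxiliary quantity $z_i = z/\sum_{i=1}^{n}\alpha_i$ scales as $\lambda^{2}z_i$ under $\delta_{\lambda}$.

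Then comes the verification. \emph{Continuity}: each summand $\lp \vert x_i\vert^2+\vert y_i\vert^2+\vert z_i\vert\rp^{1/2}$ is the composition of $t\mapsto\sqrt{t}$, continuous on $[0,\infty)$, with a continuous nonnegative function, and a finite sum of continuous functions is continuous. \emph{Positivity}: $\Vert g\Vert_{\omega}=0$ forces every summand to vanish, hence each of $\vert x_i\vert$, $\vert y_i\vert$, $\vert z_i\vert$ to be $0$, so $g$ is the identity $e$; conversely $\Vert e\Vert_{\omega}=0$. \emph{Homogeneity}: substituting $\delta_{\lambda}g$, each summand becomes $\lp \lambda^{2}\vert x_i\vert^2+\lambda^{2}\vert y_i\vert^2+\lambda^{2}\vert z_i\vert\rp^{1/2}=\lambda\lp \vert x_i\vert^2+\vert y_i\vert^2+\vert z_i\vert\rp^{1/2}$, whence $\Vert\delta_{\lambda}g\Vert_{\omega}=\lambda\Vert g\Vert_{\omega}$. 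Finally, because $\omega\lp\mathbf{v},\mathbf{v}\rp=0$, inversion is $g^{-1}=\lp-\mathbf{v},-z\rp$, and $\Vert\cdot\Vert_{\omega}$ depends only on the absolute values of the coordinates, so $\Vert g^{-1}\Vert_{\omega}=\Vert g\Vert_{\omega}$.

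There is no real obstacle here; the one point worth a moment's attention is that the exponent on $z$ in the dilations (the parabolic scaling $\lambda^{2}$) must match the fact that the $z$-coordinate enters $\Vert\cdot\Vert_{\omega}$ linearly, i.e.\ via the term $\vert z_i\vert$ rather than $\vert z_i\vert^{1/2}$ or $\vert z_i\vert^{2}$ under the square root, and this is exactly what makes the homogeneity computation close. Once the dilations are correctly identified, every property is a one-line check, and the equivalence of $\Vert\cdot\Vert_{\omega}$ with $\dist_{sR}\lp e,\cdot\rp$ used in the surrounding discussion follows from \cite[Proposition 5.1.4]{BonfiglioliLanconelliUguzzoniBook}.
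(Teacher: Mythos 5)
Your proposal is correct and follows essentially the same route as the paper's proof: a direct verification of positivity away from the identity, degree-one homogeneity under the parabolic dilations $\delta_{\lambda}\lp\mathbf{v},z\rp=\lp\lambda\mathbf{v},\lambda^{2}z\rp$, and symmetry under inversion $g^{-1}=\lp-\mathbf{v},-z\rp$. Your additional remarks on continuity and on why the linear appearance of $\vert z_i\vert$ under the square root matches the $\lambda^{2}$ scaling are fine but not needed beyond what the paper records.
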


\begin{proof}
We check that $\Vert \cdot \Vert_{\omega}$ satisfies the definition of the homogeneous norm. For $g \neq e$, it is clear that $\Vert g \Vert_{\omega}>0$. For any $\lambda>0$ and any $\lp x_1,y_1,\ldots,x_n, y_n,z\rp \in \mathbb{H}^n_{\omega}$, we have $\Vert \lambda g \Vert_{\omega}=\Vert \lp\lambda x_1,\lambda y_1,\ldots,\lambda x_n,\lambda y_n,\lambda^2 z\rp \Vert_{\omega}=\lambda\Vert  g \Vert_{\omega}$. Also, for any $\lp x_1,y_1,\ldots,x_n, y_n,z\rp \in \mathbb{H}^n_{\omega}$, we have $g^{-1}=\lp -x_1,-y_1,\ldots,-x_n, -y_n,-z\rp $ and thus $\Vert  g^{-1} \Vert_{\omega}=\Vert  g \Vert_{\omega}$. Therefore, $\Vert \cdot \Vert_{\omega}$ is a homogeneous norm.
\end{proof}

\subsection{Brownian motion on $\mathbb{H}^{n}_{\omega}$ and its associated vertical process}

We first give the explicit expression of the $\mathbb{H}^n_{\omega}$-Brownian $B_t$ generated by $\frac{1}{2}\Delta_{\mathcal{H}}^{\omega}$.

\begin{lemma}
The $\mathbb{H}^n_{\omega}$-Brownian $B_t$ generated by $\frac{1}{2}\Delta_{\mathcal{H}}^{\omega}$ started from $g_0=\lp x_{1},y_{1},\cdots,x_{n},y_{n},z\rp\in \mathbb{H}^{n}_{\omega}$ has the form $\lp x^1_t,y^1_t,\cdots,x^n_t,y^n_t,z_t\rp$, where $\lp x^1_t,y^1_t,\cdots,x^n_t,y^n_t\rp$ is a Brownian motion on $\bR^{2n}$ started from $\lp x_{1},y_{1},\cdots,x_{n},y_{n}\rp$ and $z_t$ has the form
\[
z_t=z+\frac{1}{2}\sum_{i=1}^n\alpha_i\int_0^{t} \lp x^i_{s}dy^i_{s}-y^i_{s}dx^i_{s}\rp.
\]
\end{lemma}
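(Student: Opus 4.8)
The plan is to characterize $X_t$ as the solution of the Stratonovich stochastic differential equation attached to the orthonormal frame $\{X_i^{\omega},Y_i^{\omega}\}$, then simply read off the components. Recall the standard fact (see, e.g., \cite{EltonBook}) that the diffusion generated by $\frac{1}{2}\Delta_{\mathcal{H}}^{\omega}=\frac12\sum_{i=1}^n\big((X_i^{\omega})^2+(Y_i^{\omega})^2\big)$ and started from $g_0$ is the solution of
\[
dX_t = \sum_{i=1}^n X_i^{\omega}(X_t)\circ dB^{i}_t + \sum_{i=1}^n Y_i^{\omega}(X_t)\circ d\tilde B^{i}_t, \qquad X_0=g_0,
\]
where $B^1,\dots,B^n,\tilde B^1,\dots,\tilde B^n$ are $2n$ independent standard one-dimensional Brownian motions. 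The first step is therefore to substitute the explicit expressions $X_i^{\omega}=\partial_{x_i}-\frac{\alpha_i}{2}y_i\partial_z$ and $Y_i^{\omega}=\partial_{y_i}+\frac{\alpha_i}{2}x_i\partial_z$ and extract the component equations.

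Doing so, the $x^i$- and $y^i$-components have constant coefficients, so $x^i_t=x^i_0+B^i_t$ and $y^i_t=y^i_0+\tilde B^i_t$; in particular $\big(x^1_t,y^1_t,\dots,x^n_t,y^n_t\big)$ is exactly an $\bR^{2n}$-Brownian motion from $\big(x^1_0,y^1_0,\dots,x^n_0,y^n_0\big)$, and $dx^i_t=dB^i_t$, $dy^i_t=d\tilde B^i_t$. The $z$-component reads
\[
dz_t = \frac12\sum_{i=1}^n\alpha_i\big(x^i_t\circ d\tilde B^i_t - y^i_t\circ dB^i_t\big) = \frac12\sum_{i=1}^n\alpha_i\big(x^i_t\circ dy^i_t - y^i_t\circ dx^i_t\big).
\]
The second step is to convert this from Stratonovich to Itô form: the correction terms are $\tfrac12\,d\langle x^i,y^i\rangle_t$ and $\tfrac12\,d\langle y^i,x^i\rangle_t$, and both vanish because $x^i$ and $y^i$ are driven by independent Brownian motions. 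Hence
\[
z_t = z_0 + \frac12\sum_{i=1}^n\alpha_i\int_0^t\big(x^i_s\,dy^i_s - y^i_s\,dx^i_s\big),
\]
which is the claimed formula.

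There is no substantive obstacle here; the only place requiring (minor) care is the Stratonovich--Itô passage, and it is immediate from independence of the driving noises and the constant-coefficient form of the horizontal equations. As an alternative to invoking the frame-SDE characterization, one can instead \emph{verify} the claim directly: starting from the process $\big(x^i_t,y^i_t,z_t\big)$ with $\big(x^i_t,y^i_t\big)$ an $\bR^{2n}$-Brownian motion and $z_t$ given by the displayed integral, apply It\^o's formula to a test function $f$ and check that the bounded-variation part is exactly $\frac12\Delta_{\mathcal{H}}^{\omega}f$, using $[X_i^{\omega},Y_i^{\omega}]=\alpha_iZ^{\omega}$ to bookkeep the second-order terms; this produces the same cancellations. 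One may also observe that, at the level of the defining vector fields, $\mathbb{H}^n_{\omega}$ looks like $n$ rescaled copies of the three-dimensional Heisenberg group sharing a common vertical axis, so that $z_t$ is just the sum of the corresponding $n$ rescaled L\'evy areas, reducing the statement to the three-dimensional case already recorded around \eqref{Eqn:HeisTC}.
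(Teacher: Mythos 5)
Your proposal is correct and takes essentially the same route as the paper: both characterize the diffusion via the left-invariant (equivalently, frame) Stratonovich SDE driven by $\{X_i^{\omega},Y_i^{\omega}\}$ and then read off the coordinate components. The paper phrases this at the Lie-group level — it records that the left-translated increment $dL_{X_t}^{-1}(dX_t)$ is a horizontal $\bR^{2n}$-Brownian motion and then ``solves using the group law'' \eqref{GroupLaw} — whereas you expand the vector fields in coordinates and read off each component equation directly. Your version is arguably slightly more complete in that it makes explicit the Stratonovich-to-Itô correction and checks that it vanishes (since $x^i$ and $y^i$ are driven by independent noises), a step the paper leaves implicit.
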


\begin{proof}
The Brownian $B_t$ generated by $\frac{1}{2}\Delta_{\mathcal{H}}^{\omega}$ satisfies
\[
dL_{g_0}\lp dB_t\rp=\lp dx^1_t,dy^1_t,\cdots,dx^n_t,dy^n_t,0\rp
\]
where $L_g:\mathbb{H}^n_{\omega} \rightarrow \mathbb{H}^n_{\omega}$ is the left translation and $\lp x^1_t,y^1_t,\cdots,x^n_t,y^n_t\rp$ is a Brownian motion started from $\lp x_{1},y_{1},\cdots,x_{n},y_{n}\rp$ on $\bR^{2n}$. Making use of the explicit group law \eqref{GroupLaw} of $\mathbb{H}^n_{\omega}$, we can solve the above system of SDEs, which yields our desired result.
\end{proof}

Throughout this section, we will call the process $z_t$ the \emph{vertical process} for simplicity. Here we list some properties of it.

\begin{lemma}\label{Lem:AreaProcess.TimeChangeOfBM} 
The vertical process $z_t$ has the same distribution as $z+W_{\int_{0}^t \frac{1}{4}\lp r^{\omega}_s\rp^2ds}$, where $\lp W_t\rp_{t\geq 0}$ is a standard Brownian motion and the process $\lp r^{\omega}_t\rp_{t\geq 0}$ has the form 
\begin{align*}
r^{\omega}_t=\sqrt{\sum_{i=1}^n \lp \alpha_i\rp^2\lp \lp x^i_{t}\rp^2+\lp y^i_{t}\rp^2\rp}.
\end{align*}
\end{lemma}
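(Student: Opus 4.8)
The plan is to mimic the proof of the analogous facts for $\bH$ and $\operatorname{SL}(2)$: the vertical process $z_t$ is a continuous local martingale (being a sum of stochastic integrals $\frac{\alpha_i}{2}\int_0^t (x^i_s\,dy^i_s - y^i_s\,dx^i_s)$), so by the Dambis--Dubins--Schwarz theorem it can be written as $z_0 + W_{\QV{z}_t}$ for a Brownian motion $W$, where $\QV{z}_t$ is the quadratic variation of $z_t$. First I would compute $\QV{z}_t$: since the $x^i$ and $y^i$ are independent one-dimensional Brownian motions, $d\QV{z}_t = \frac{1}{4}\sum_{i=1}^n \alpha_i^2\bigl((x^i_t)^2 + (y^i_t)^2\bigr)\,dt = \frac14 (r^\omega_t)^2\,dt$, which gives exactly the claimed time change $\int_0^t \frac14 (r^\omega_s)^2\,ds$.

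The one point requiring care is the \emph{independence} of $W$ from the radial process $r^\omega_t$ (equivalently, from the process $(r^\omega_t)_{t\ge 0}$ that governs the time change). DDS alone produces a Brownian motion adapted to the time-changed filtration, but it need not be independent of $\QV{z}$. The standard way around this, exactly as in the Heisenberg case behind \eqref{Eqn:HeisTC}, is to pass to polar-type coordinates in each $\bR^2$-block: writing $(x^i_t, y^i_t)$ in polar coordinates $(\rho^i_t, \phi^i_t)$, one has $x^i_t\,dy^i_t - y^i_t\,dx^i_t = (\rho^i_t)^2\,d\phi^i_t$, and the angular Brownian motions driving the $\phi^i$ are independent of the radial parts $\rho^i_t$. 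Hence $z_t - z_0$ is a sum over $i$ of time-changed Brownian motions, each driven by a noise independent of all the radial processes; assembling these via a single Brownian motion and using that the radial processes determine $r^\omega_t$ while being jointly independent of the assembled driving noise gives the desired representation with $W \perp (r^\omega_t)_{t\ge0}$. Alternatively, one can invoke Lemma~\ref{Lem:BM} / the rotational-invariance arguments already used, or cite \cite{FabriceWangArea}; but the polar-coordinate computation is self-contained and parallels \eqref{Eqn:HeisTC} directly.

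Concretely, the steps are: (1) identify $z_t - z_0$ as a continuous local martingale and compute its quadratic variation to be $\int_0^t \frac14 (r^\omega_s)^2\,ds$; (2) in each $\bR^2$-factor, change to polar coordinates to exhibit $x^i\,dy^i - y^i\,dx^i = (\rho^i)^2\,d\phi^i$ with $(\phi^i)$'s angular noise independent of $(\rho^i)$; (3) combine the resulting independent time-changed pieces into a single Brownian motion $W$ and check it is independent of the joint law of the radial processes, hence of $(r^\omega_t)_{t\ge0}$; (4) conclude $z_t \overset{d}{=} z_0 + W_{\int_0^t \frac14 (r^\omega_s)^2\,ds}$. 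The main (mild) obstacle is step (3) --- making the independence assertion rigorous rather than merely plausible --- but since the building blocks $(x^i,y^i)$ are mutually independent across $i$ and within each block the radial/angular split is classical, this is routine; one could also simply remark that the same reasoning as for \eqref{Eqn:HeisTC} applies verbatim, summed over the $n$ symplectic blocks with the weights $\alpha_i$.
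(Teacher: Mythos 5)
Your approach is essentially the paper's: the paper also identifies $dz_t = \frac12 r^\omega_t\,dW^\omega_t$ with $W^\omega_t = \int_0^t \frac{\sum_i\alpha_i(x^i_s\,dy^i_s - y^i_s\,dx^i_s)}{r^\omega_s}$, checks $\langle W^\omega\rangle_t = t$ so that $W^\omega$ is a Brownian motion by L\'evy's characterization, and concludes by the Dambis--Dubins--Schwarz representation. Your quadratic-variation computation $d\langle z\rangle_t = \frac14\sum_i\alpha_i^2\bigl((x^i_t)^2+(y^i_t)^2\bigr)\,dt = \frac14(r^\omega_t)^2\,dt$ matches theirs.

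Where you go beyond the paper is in step (3), the independence of $W$ from $(r^\omega_t)_{t\ge0}$. The paper's proof does not address this at all — it defines $W^\omega$, verifies it is a Brownian motion, and stops with ``the desired result follows.'' (Indeed the paper's remark that $W^\omega$ ``is independent of $\omega$'' is just an excuse to drop the superscript, not an independence claim, and strictly speaking the DDS Brownian motion $u\mapsto z_{T(u)}-z_0$, with $T$ the inverse time change, is not the same process as $W^\omega$.) Your polar-coordinate decomposition — writing $x^i\,dy^i - y^i\,dx^i = \rho^i\,dB^{\theta,i}$ with $B^{\theta,i}$ the angular driving noise independent of the radial parts, then noting that conditionally on all the $\rho^j$ the process $\sum_i\alpha_i\int_0^\cdot\rho^i\,dB^{\theta,i}$ is a Gaussian martingale whose time-changed version has a conditional law not depending on the conditioning — is the right way to make this rigorous, exactly parallel to how \eqref{Eqn:HeisTC} is justified for $n=1$. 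So your proposal is not a gap; if anything it fills one. The only thing I'd tighten is the phrasing of (3): rather than ``assembling these via a single Brownian motion,'' say explicitly that you condition on $\sigma(\rho^1,\ldots,\rho^n)$, observe the conditional law of the time-changed process is standard Brownian and does not depend on the conditioning, and hence the DDS Brownian motion is independent of that $\sigma$-field and in particular of $(r^\omega_t)_{t\ge0}$. That makes ``routine'' into a one-line argument.
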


\begin{proof}
By It\^o's formula,
\begin{align*}
dz_t=\frac{1}{2}r^{\omega}_t\cdot \frac{\sum_{i=1}^n\alpha_i \lp x^i_{t}dy^i_{t}-y^i_{t}dx^i_{t}\rp}{r^{\omega}_t}.
\end{align*}
Let $W^{\omega}_t=\int_0^t\frac{\sum_{i=1}^n\alpha_i \lp x^i_{s}dy^i_{s}-y^i_{s}dx^i_{s}\rp}{r^{\omega}_s}$. We can see that $\langle W^{\omega}_t \rangle_t=t$, so $\lp W^{\omega}_t\rp_{t\geq 0}$ is a standard Brownian motion, which is independent of $\omega$, and we will denote $\lp W^{\omega}_t\rp_{t\geq 0}$ by $\lp W_t\rp_{t\geq 0}$ instead. Therefore, the desired result follows.
\end{proof}

We denote the density of the law of the vertical process $z_t$ with respect to the Lebesgue measure on $\bR$ by $f_t^{\omega}$ and give the following estimate.

\begin{lemma} \label{Lem:eqn.DensityUpperBound2}
For any $z\in \bR$ and $n\geq 2$, we have
\begin{align} \label{eqn.DensityUpperBound2}
f_t^{\omega}(z) \leq \frac{1}{\alpha_n t}.
\end{align}
\end{lemma}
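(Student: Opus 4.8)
The plan is to exploit the decomposition of $z_t$ as a linear combination of \emph{independent} L\'evy stochastic area processes, together with the elementary fact that convolving a probability density against an independent law cannot increase its supremum.

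First I would reduce to the case $z_0 = 0$. Since \eqref{eqn.DensityUpperBound2} is asserted for every $z \in \bR$, and changing the starting height of the vertical process merely translates its density, it is enough to bound $\sup_{z} f^{\omega}_t(z)$ when $z_0 = 0$. In that case, as recorded just above the statement, $z_t = \sum_{i=1}^{n}\alpha_i z^i_t$ with $z^i_t = \frac{1}{2}\int_0^t \lp x^i_s\,dy^i_s - y^i_s\,dx^i_s\rp$, where the planar Brownian motions $(x^i_t,y^i_t)$, $i = 1,\ldots,n$, are mutually independent; hence the processes $z^i_t$ are independent, and each $z^i_t$ is a classical L\'evy area, so by \eqref{Eqn:f_t} it has density $f_t(a) = \frac{1}{t}\sech\lp\frac{\pi a}{t}\rp$.

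Next I would single out the index $n$, which carries the largest weight by the ordering $\alpha_1 \leq \cdots \leq \alpha_n$. By scaling, $\alpha_n z^n_t$ has density $a \mapsto \frac{1}{\alpha_n t}\sech\lp\frac{\pi a}{\alpha_n t}\rp$, which is bounded above by $\frac{1}{\alpha_n t}$ because $\sech \leq 1$ everywhere. Writing $z_t = \alpha_n z^n_t + R_t$ with $R_t := \sum_{i=1}^{n-1}\alpha_i z^i_t$ independent of $\alpha_n z^n_t$, the density of $z_t$ is the convolution of the law of $R_t$ with the density of $\alpha_n z^n_t$, so
\[
f^{\omega}_t(z) = \E\lb \frac{1}{\alpha_n t}\sech\lp\frac{\pi\lp z - R_t\rp}{\alpha_n t}\rp \rb \leq \frac{1}{\alpha_n t},
\]
which is the claimed bound. (Here $n \geq 2$ is what makes $R_t$ a genuine, non-degenerate summand; for $n = 1$ the estimate is immediate, being an equality up to the $\sech$ factor.)

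There is no genuinely hard step here. One need only be careful that the $z^i_t$ are independent, which is immediate from the independence of the coordinate Brownian motions, and that convolution against a probability measure does not raise the supremum of a density, which is Fubini. The estimate is of course not sharp for $n \geq 2$: convolving against the spread-out law of $R_t$ strictly lowers the maximum of the density, but $\frac{1}{\alpha_n t}$ is all that is needed in the sequel.
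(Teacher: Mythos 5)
Your proof is correct and takes essentially the same approach as the paper: both single out the term $\alpha_n z^n_t$ with the largest weight, bound its density by $\frac{1}{\alpha_n t}$ using $\sech \leq 1$, and observe that convolving against the remaining (probability) law cannot increase the supremum. The paper writes out the $n$-fold convolution integral explicitly and peels off one factor at a time, whereas you lump the remaining summands into a single independent random variable $R_t$ and write the convolution as an expectation, which avoids the case split on $n=2$ versus $n>2$ but is the same estimate.
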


\begin{proof}
Note that the $z^i_{t}$ are i.i.d., each with the density \eqref{Eqn:f_t}, and the scaling relationship implies that the density $f_t^{\alpha_i}$ of $\alpha_iz^i_{t}$ for $i=1,\cdots,n$ has the form
\[
f_t^{\alpha_i}(z)=f_{\alpha_i t}(z) = \frac{1}{\alpha_i t} \sech\left(\frac{\pi z}{\alpha_i t}\right).
\]
Then the density $f_t^{\omega}$ of the area process $z_t=\sum_{i=1}^n\alpha_iz^i_{t}$ can be written as a convolution of all $f_t^{\alpha_i}$ for $i=1,\cdots,n$ as below 
\begin{align*}
f_t^{\omega}(z)= \lp f_t^{\alpha_1} * f_t^{\alpha_2} * \cdots * f_t^{\alpha_n} \rp (z).
\end{align*}
Since the convolution operator is commutative, we have
\begin{align*}
f_t^{\omega}(z)=\lp f_t^{\alpha_n} * f_t^{\alpha_{n-1}} * \cdots * f_t^{\alpha_1} \rp (z).
\end{align*}

Now we prove \eqref{eqn.DensityUpperBound2}. We know that $\int_{-\infty}^{\infty} \sech\left(\frac{\pi z}{\alpha_it}\right)dz=\alpha_i t$ and $0< \sech\left(\frac{\pi z}{\alpha_it}\right)\leq 1$ for $i=1,\cdots,n$. When $n=2$, we have
\begin{align*}
f_t^{\omega}(z) &
=\frac{1}{\alpha_1\alpha_2t^2}\int_{-\infty}^{\infty} \sech\left(\frac{\pi \tau}{\alpha_2t}\right) \sech\left(\frac{\pi (z-\tau)}{\alpha_1t}\right)d\tau
\\
&
\leq \frac{1}{\alpha_1\alpha_2t^2} \int_{-\infty}^{\infty} \sech\left(\frac{\pi (z-\tau)}{\alpha_1t}\right)d\tau
\\
&
=\frac{1}{\alpha_1\alpha_2t^2} \cdot \alpha_1 t=\frac{1}{\alpha_2 t}
\end{align*}
where the second equality is by using the change of variable. When $n>2$, the similar idea still applies as below
\begin{align*}
f_t^{\omega}(z) & =\frac{1}{\alpha_1\ldots \alpha_nt^{n}} \int_0^{\infty} \ldots \int_0^{\infty} \sech\left(\frac{\pi \tau_{n-1}}{\alpha_nt}\right) \Pi_{i=2}^{n-1}\sech\left(\frac{\pi (\tau_{i-1}-\tau_i)}{\alpha_{i-1}t}\right) \sech\lp \frac{\pi\lp z-\tau_1\rp}{\alpha_1t}\rp d\tau_1\ldots d\tau_{n-1}
\\
&
\leq \frac{1}{\alpha_1\ldots \alpha_nt^{n}} \int_0^{\infty} \ldots \int_0^{\infty} \Pi_{i=2}^{n-1}\sech\left(\frac{\pi (\tau_{i-1}-\tau_i)}{\alpha_{i-1}t}\right) \sech\lp \frac{\pi\lp z-\tau_1\rp}{\alpha_1t}\rp d\tau_1\ldots d\tau_{n-1}
\\
&
=\frac{1}{\alpha_1\ldots \alpha_nt^{n}} \cdot \lp \alpha_1\ldots\alpha_{n-1}t^{n-1}\rp=\frac{1}{\alpha_n t}.
\end{align*}
Therefore, \eqref{eqn.DensityUpperBound2} holds.
\end{proof}

\subsection{Horizontal coupling}

We take two $\mathbb{H}^n_{\omega}$-Brownian motions $B_t=(x^1_t,y^1_t,\ldots,x^n_t,y^n_t,z_t)$ and $\widetilde{B}_t=(\widetilde{x}^1_t,\widetilde{y}^1_t,\ldots,\widetilde{x}^n_t,\widetilde{y}^n_t,\widetilde{z}_t)$ started from two points $(x_1,y_1,\ldots,x_n,y_n,z)$ and $(\widetilde{x}_1,\widetilde{y}_1,\ldots,\widetilde{x}_n,\widetilde{y}_n,\widetilde{z})$ respectively. Since the L\'evy stochastic area is invariant under rotations of coordinates on each copy of $\bR^2$, it suffices to consider the case when $\widetilde{x}_i=-x_i$ and $\widetilde{y}_i=y_i$ for $i=1,\cdots,n$.

\begin{assumption}
Throughout the rest of the paper, we assume that $\widetilde{x}_i=-x_i$ and $\widetilde{y}_i=y_i$ for $i=1,\cdots,n$.
\end{assumption}

We first couple the ``horizontal'' parts of $B_t$ and $\widetilde{B}_t$, that is, we couple $(x^1_t,y^1_t,\ldots,x^n_t,y^n_t)$ and $(\widetilde{x}^1_t,\widetilde{y}^1_t,\ldots,\widetilde{x}^n_t,\widetilde{y}^n_t)$ on $\bR^{2n}$. To do so, we take a mirror coupling of $(x^1_t,y^1_t,\ldots,x^n_t,y^n_t)$ and $(\widetilde{x}^1_t,\widetilde{y}^1_t,\ldots,\widetilde{x}^n_t,\widetilde{y}^n_t)$ in the sense of Kendall and Cranston. That is, reflecting $(x^1_t,y^1_t,\ldots,x^n_t,y^n_t)$ through a hyperplane through the origin with the unit normal vector given by $\Vec{n}=\frac{1}{L}\left(x_1,0,\ldots,x_n,0\right)$ with $L=\sqrt{\sum_{i=1}^n\left(x_i\right)^2}$, we obtain $(\widetilde{x}^1_t,\widetilde{y}^1_t,\ldots,\widetilde{x}^n_t,\widetilde{y}^n_t)$.

\begin{lemma} \label{lem.HorizontalCoupling}
For any $t>0$, we have
\begin{align}
& \widetilde{x}^i_t=x^i_t-\frac{2x_i}{L^2} \sum_{j=1}^n x_jx_t^j, \notag
\\
&
\widetilde{y}^i_t=y_t^i \notag
\end{align}
for $i=1,\cdots,n$.
\end{lemma}

\begin{proof}
Taking a Householder transformation of $(x^1_t,y^1_t,\ldots,x^n_t,y^n_t)$ with the matrix given by $P=I-2\Vec{n}\Vec{n}^{\intercal}$, we obtain $(\widetilde{x}^1_t,\widetilde{y}^1_t,\ldots,\widetilde{x}^n_t,\widetilde{y}^n_t)$. We can see that
\begin{align*}
P=\begin{bmatrix}
1-2\left(\frac{x_1}{L}\right)^2 & 0 & -\frac{2x_1x_2}{L^2} & 0 & \cdots & -\frac{2x_1x_n}{L^2} & 0 \\
0 & 1 & 0 & 0 & \cdots & 0 & 0 \\
-\frac{2x_2x_1}{L^2} & 0 & 1-2\left(\frac{x_2}{L}\right)^2 & 0 & \cdots & -\frac{2x_2x_n}{L^2} & 0 \\
0 & 0 & 0 & 1 & \cdots & 0 & 0 \\
\vdots & \vdots & \vdots & \vdots & \ddots & \vdots & \vdots \\
-\frac{2x_nx_1}{L^2} & 0 & -\frac{2x_nx_2}{L^2} & 0 & \cdots & 1-2\left(\frac{x_n}{L}\right)^2 & 0 \\
0 & 0 & 0 & 0 & \cdots & 0 & 1
\end{bmatrix}.
\end{align*}
Using $(\widetilde{x}^1_t,\widetilde{y}^1_t,\ldots,\widetilde{x}^n_t,\widetilde{y}^n_t)^{\intercal}=P(x^1_t,y^1_t,\ldots,x^n_t,y^n_t)^{\intercal}$, we obtain the desired result.
\end{proof}

Meanwhile, we track the displacement of such two Brownian motions in the vertical direction, that is, we will track the difference between their vertical processes $z_t$ and  $\widetilde{z}_t$ when $(x^1_t,y^1_t,\ldots,x^n_t,y^n_t)$ and $(\widetilde{x}^1_t,\widetilde{y}^1_t,\ldots,\widetilde{x}^n_t,\widetilde{y}^n_t)$ are coupled.

A key object in our coupling construction  for  $\mathbb{H}^n_{\omega}$-Brownian motions is the \emph{invariant difference of stochastic areas} given by
\begin{align*}
A_t & =z_t-\widetilde{z}_t+\frac{1}{2}\sum_{i=1}^n \alpha_i \lp x^i_t\widetilde{y}^i_t-\widetilde{x}^i_ty^i_t\rp
\\
&
=z-\widetilde{z}+\frac{1}{2}\sum_{i=1}^n \alpha_i\left[ \int_0^t\lp x^i_sdy^i_s-y^i_sdx^i_s\rp- \int_0^t\lp\widetilde{x}^i_sd\widetilde{y}^i_s-\widetilde{y}^i_sd\widetilde{x}^i_s\rp\right]+\frac{1}{2}\sum_{i=1}^n \alpha_i \lp x^i_t\widetilde{y}^i_t-\widetilde{x}^i_ty^i_t\rp.
\end{align*}
Using the above lemma, we have
\begin{align}
A_t-A_0 & =\sum_{i=1}^n \alpha_i \int_0^t \left(x^i_s-\widetilde{x}^i_s\right) dy^i_s \notag
\\
&
= \frac{2\sqrt{\sum_{i=1}^n\left(\alpha_i x_i\right)^2}}{L} \int_0^t \frac{\sum_{j=1}^n x_jx_s^j}{L} d\left(\frac{\sum_{i=1}^n\alpha_i x_iy_s^i}{\sqrt{\sum_{i=1}^n\left(\alpha_i x_i\right)^2}}\right).
\end{align}
for any $t>0$.

\begin{lemma}
The stochastic processes $\frac{\sum_{j=1}^n x_jx_t^j}{L}$ and $\frac{\sum_{i=1}^n\alpha_i x_iy_t^i}{\sqrt{\sum_{i=1}^n\left(\alpha_i x_i\right)^2}}$ are two independent Brownian motions started from $L$ and $\frac{\sum_{i=1}^n \alpha_i x_iy_i}{\sqrt{\sum_{i=1}^n\left(\alpha_i x_i\right)^2}}$ respectively.
\end{lemma}

\begin{proof}
Using the fact that $\{x_t^i,y_t^i:i=1,\in n\}$ are independent Brownian motions, we have
\begin{align*}
\left \langle \frac{\sum_{j=1}^n x_jx_t^j}{L},\frac{\sum_{j=1}^n x_jx_t^j}{L} \right \rangle_t & =\frac{1}{L^2} \sum_{i,j=1}^n x_ix_j \langle x_t^i,x_t^j \rangle_t
\\
&
=\frac{1}{L^2} \sum_{i=1}^n (x_i)^2 \langle x_t^i,x_t^i \rangle_t
\\
&
=\frac{1}{L^2} \sum_{i=1}^n (x_i)^2t=t,
\end{align*}
\begin{align*}
\left\langle \frac{\sum_{i=1}^n\alpha_i x_iy_t^i}{\sqrt{\sum_{i=1}^n\left(\alpha_i x_i\right)^2}}, \frac{\sum_{i=1}^n\alpha_i x_iy_t^i}{\sqrt{\sum_{i=1}^n\left(\alpha_i x_i\right)^2}} \right\rangle_t & = \frac{1}{\sum_{i=1}^n\left(\alpha_i x_i\right)^2} \sum_{i,j=1}^n \alpha_i\alpha_jx_ix_j \langle y_t^i,y_t^j \rangle_t
\\
&
= \frac{1}{\sum_{i=1}^n\left(\alpha_i x_i\right)^2} \sum_{i=1}^n \left(\alpha_i x_i\right)^2 \langle y_t^i,y_t^i \rangle_t
\\
&
= \frac{1}{\sum_{i=1}^n\left(\alpha_i x_i\right)^2} \sum_{i=1}^n \left(\alpha_i x_i\right)^2 t=t
\end{align*}
and
\begin{align*}
\left\langle \frac{\sum_{j=1}^n x_jx_t^j}{L}, \frac{\sum_{i=1}^n\alpha_i x_iy_t^i}{\sqrt{\sum_{i=1}^n\left(\alpha_i x_i\right)^2}} \right\rangle_t & =\frac{1}{L\sqrt{\sum_{i=1}^n\left(\alpha_i x_i\right)^2}} \sum_{i,j=1}^n\alpha_ix_ix_j \langle x_t^j,y_t^i \rangle_t=0.
\end{align*}
By L\'evy's characterization of Brownian motions, we see that $\frac{\sum_{j=1}^n x_jx_t^j}{L}$ and $\frac{\sum_{i=1}^n\alpha_i x_iy_t^i}{\sqrt{\sum_{i=1}^n\left(\alpha_i x_i\right)^2}}$ are two independent Brownian motions started from $L$ and $\frac{\sum_{i=1}^n \alpha_i x_iy_i}{\sqrt{\sum_{i=1}^n\left(\alpha_i x_i\right)^2}}$ respectively.
\end{proof}

Let $T_1=\inf\{t\geq 0: x_t^i=\widetilde{x}^i_t, i=1,\cdots,n\}$ be the horizontal coupling time. By the construction of the mirror coupling (Lemma~\ref{lem.HorizontalCoupling}), we see that $T_1=\inf\{t \geq 0: \sum_{j=1}^n x_jx_s^j=0\}$. This shows that $T_1$ is the first hitting time of the level $0$ by the Brownian motion $\frac{\sum_{j=1}^n x_jx_t^j}{L}$. By a standard estimate for the first hitting time of a Brownian motion, there exists a positive constant $C$ (independent of $\omega$) such that
\begin{align} \label{ineq.HorizontalCouplingTime.TailBound}
\Prob \lp T_1>t\rp \leq \frac{C \sqrt{\sum_{i=1}^n\left(x_i\right)^2}}{\sqrt{t}}
\end{align}
for $t \geq \sum_{i=1}^n\left(x_i\right)^2$.

In this way, we obtain the following theorem.

\begin{THM} \label{THM:TailExpectationBound}
There exists a constant $C>0$ (independent of $\omega$) such that
\begin{align} \label{ineq.TailExpectationBound}
\E\left(\frac{\vert A_{T_1}\vert}{t}\wedge 1\right) & \leq C \lp \frac{\lp \sum_{i=1}^n \lp \alpha_i\rp^2\rp^{\frac{1}{4}} \cdot \lp \sum_{i=1}^n \lp x_i\rp^2\rp^{\frac{1}{2}}}{\sqrt{t}} + \frac{\vert z-\widetilde{z} +\sum_{i=1}^n \alpha_i x_iy_i\vert}{t}\rp 
\end{align}
for any $t \geq 2 \max \left\{ 2\lp \sum_{i=1}^n \lp \alpha_i\rp^2\rp^{\frac{1}{2}} \lp\sum_{i=1}^n  \lp x_i\rp^2\rp, \vert z-\widetilde{z} +\sum_{i=1}^n \alpha_i x_iy_i\vert \right\}$.
\end{THM}

\begin{proof}

In this case, we have $A_t-A_0=a \int_0^t W_s^2dW_s^1$ with  $a=\frac{2\sqrt{\sum_{i=1}^n\left(\alpha_i x_i\right)^2}}{L}$ where $W_t^1=\frac{\sum_{i=1}^n\alpha_i x_iy_t^i}{\sqrt{\sum_{i=1}^n\left(\alpha_i x_i\right)^2}}$ and $W_t^2=\frac{\sum_{j=1}^n x_jx_t^j}{L}$ are two independent Brownian motions started from $\frac{\sum_{i=1}^n \alpha_i x_iy_i}{\sqrt{\sum_{i=1}^n\left(\alpha_i x_i\right)^2}}$ and $L$ respectively. The idea of the proof of \cite[Theorem 3.5]{BGM} still applies here and we can use it to obtain the desired result. For completeness, we still include the proof here. 

For any $t>0$, we have
\begin{align}
\E\left(\frac{\vert A_{T_1}\vert}{t}\wedge 1\right)=\Prob\left(\vert A_{T_1}\vert>t\right)+\E\left(\frac{\vert A_{T_1}\vert}{t}\chi_{\{\vert A_{T_1}\vert \leq t\}}\right).
\end{align}
We estimate two terms in the right side of the above equality separately.

For $t \geq \max\{2\vert A_0\vert,2a\lp W_0^2\rp^2\}$, applying \cite[Lemma 3.3]{BGM}, we have
\begin{align*}
\Prob\left(\vert A_{T_1}\vert>t\right) & \leq \Prob \lp \vert A_{T_1}-A_0\vert >\frac{t}{2}\rp
\\
&
=\Prob \lp \left\vert a\int_0^{T_1} W_s^2dW_s^1 \right\vert >\frac{t}{2} \rp
\\
&
\leq 2W_0^2 \sqrt{\frac{2a}{t}}.
\end{align*}

Now we estimate the second term $\E\left(\frac{\vert A_{T_1}\vert}{t}\chi_{\{\vert A_{T_1}\vert \leq t\}}\right)$. Let $k_0$ be the smallest $k$ such that $2^{-k-1}t \leq \max\{2\vert A_0\vert,2a\lp W_0^2\rp^2\}$. Then
\begin{align*}
\E\left(\frac{\vert A_{T_1}\vert}{t}\chi_{\{\vert A_{T_1}\vert \leq t\}}\right) & \leq \sum_{k=0}^{\infty} 2^{-k} \Prob\left(2^{-k-1}t < \vert A_{T_1}\vert \leq 2^{-k}t\right)
\\
&
\leq \sum_{k=0}^{\infty} 2^{-k} \Prob\left( \vert A_{T_1}\vert \geq 2^{-k-1}t\right) 
\\
&
=\sum_{k=0}^{k_0} 2^{-k} \Prob\left( \vert A_{T_1}\vert \geq 2^{-k-1}t\right) + \sum_{k=k_0+1}^{\infty} 2^{-k} \Prob\left( \vert A_{T_1}\vert \geq 2^{-k-1}t\right). 
\end{align*}
Next we estimate the above two terms separately.

For the first term $\sum_{k=0}^{k_0} 2^{-k} \Prob\left( \vert A_{T_1}\vert \geq 2^{-k-1}t\right)$, we can apply \cite[Lemma 3.3]{BGM} again and obtain
\begin{align*}
\sum_{k=0}^{k_0} 2^{-k} \Prob\left( \vert A_{T_1}\vert \geq 2^{-k-1}t\right) & \leq \sum_{k=0}^{k_0} 2\cdot 2 ^{-\frac{k}{2}} W_0^2 \sqrt{\frac{a}{t}}
\\
&
\leq \frac{CW_0^2 \sqrt{a}}{\sqrt{t}}
\end{align*}
where $C$ is a constant independent of $A_0$, $a$ and $\omega$.

For the second term $\sum_{k=k_0+1}^{\infty} 2^{-k} \Prob\left( \vert A_{T_1}\vert \geq 2^{-k-1}t\right)$, we have
\begin{align*}
\sum_{k=k_0+1}^{\infty} 2^{-k} \Prob\left( \vert A_{T_1}\vert \geq 2^{-k-1}t\right) & \leq 2^{-k_0}
\\
&
\leq \frac{2\max\{2\vert A_0\vert,2a\lp W_0^2\rp^2\}}{t}
\\
&
\leq \frac{4}{t} \lp \vert A_0\vert+a\lp W_0^2\rp^2\rp.
\end{align*}
For $t \geq \max\{2\vert A_0\vert,2a\lp W_0^2\rp^2\}$, we have $\sqrt{t} \geq \sqrt{2a} W_0^2$, so $\frac{W_0^2 \sqrt{a}}{\sqrt{t}} \geq \sqrt{2} \frac{\lp W_0^2\rp^2 a}{t}$. Then
\begin{align*}
\sum_{k=k_0+1}^{\infty} 2^{-k} \Prob\left( \vert A_{T_1}\vert \geq 2^{-k-1}t\right) \leq C\lp \frac{\vert A_0\vert}{t}+\frac{W_0^2 \sqrt{a}}{\sqrt{t}}\rp 
\end{align*}
where $C$ is a constant independent of $A_0$, $a$ and $\omega$.

Altogether, using $a=\frac{2\sqrt{\sum_{i=1}^n\left(\alpha_i x_i\right)^2}}{L} \leq 2 \sqrt{\sum_{i=1}^n\left(\alpha_i \right)^2}$, we can obtain 
\begin{align*}
\E\left(\frac{\vert A_{T_1}\vert}{t}\wedge 1\right) & \leq C \lp \frac{\vert A_0\vert}{t}+\frac{W_0^2 \sqrt{a}}{\sqrt{t}}\rp
\\
&
\leq C \lp \frac{\lp \sum_{i=1}^n \lp \alpha_i\rp^2\rp^{\frac{1}{4}} \cdot \lp \sum_{i=1}^n \lp x_i\rp^2\rp^{\frac{1}{2}}}{\sqrt{t}} + \frac{\vert z-\widetilde{z} +\sum_{i=1}^n \alpha_i x_iy_i\vert}{t}\rp
\end{align*}
for any $t \geq 2 \max \left\{ 2\lp \sum_{i=1}^n \lp \alpha_i\rp^2\rp^{\frac{1}{2}} \lp\sum_{i=1}^n  \lp x_i\rp^2\rp, \vert z-\widetilde{z} +\sum_{i=1}^n \alpha_i x_iy_i\vert \right\} \geq \max\{2\vert A_0\vert,2a\lp W_0^2\rp^2\}$, which is the desired result.
\end{proof}

\subsection{Vertical coupling}

We couple the vertical processes $z_t$ and $\widetilde{z}_t$.   

\begin{notation}
On each copy of $\bR^2$ for all $i=1,\ldots,n$, we can take the cylindrical coordinates $\lp r_i,\theta_i\rp$. For any $g=\lp x_1,y_1,\ldots,x_n,y_n,z\rp \in \mathbb{H}^n_{\omega}$, we can take the cylindrical coordinates instead of the Cartesian coordinates on each copy of $\bR^2$ for all $i=1,\ldots,n$, and thus we can denote the element $g\in \mathbb{H}^n_{\omega}$ by $\lp r_1,\theta_1,\ldots,r_n,r_n,z\rp$.

Let $B_t=(x^1_t,y^1_t,\ldots,x^n_t,y^n_t,z_t)$ be an $\mathbb{H}^n_{\omega}$-Brownian motion. In this section, we take the cylindrical coordinates instead of the Cartesian coordinates on each copy of $\bR^2$ for all $i=1,\ldots,n$, and use $\lp r^1_t,\theta^1_t,\ldots,r^n_t,\theta^n_t,z_t\rp$ to denote the Brownian motion $B_t$.
\end{notation}

Let $B_t=(x^1_t,y^1_t,\ldots,x^n_t,y^n_t,z_t)$ be an $\mathbb{H}^n_{\omega}$-Brownian motion started at $(0,\ldots,0,0)$ and let $\sigma_a$ be the first time $z_t$ hits $a$ for each fixed $a\in \bR$. We first list some properties of $\sigma_a$.

\begin{Lemma}\label{Lem.HittingTime.FiniteAS}
The hitting time $\sigma_a$ is finite a.s.
\end{Lemma}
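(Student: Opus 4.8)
The plan is to reduce the statement to the almost-sure finiteness of the time change appearing in the representation of $z_t$. By Lemma~\ref{Lem:AreaProcess.TimeChangeOfBM}, the vertical process satisfies
\[
z_t \overset{d}{=} z_0 + W_{S(t)}, \qquad S(t) = \int_0^t \tfrac{1}{4}\lp r^{\omega}_s\rp^2\, ds,
\]
where $W$ is a standard one-dimensional Brownian motion independent of the clock $S$, and $r^{\omega}_s = \sqrt{\sum_{i=1}^n \alpha_i^2\lp (x^i_s)^2+(y^i_s)^2\rp}$. Since $W$ is recurrent, it hits every level in $\bR$ in finite time a.s.; so $\sigma_a$ is a.s.\ finite provided $S(t)\to\infty$ a.s.\ as $t\to\infty$. (More precisely, $\sigma_a$ is the first time $S(t)$ reaches the a.s.-finite first hitting time of $a-z_0$ by $W$, which is finite once $S$ is unbounded and continuous.)

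The main step is therefore to show $S(t)\to\infty$ a.s. First I would note that $S$ is non-decreasing, so it suffices to show it is unbounded. Each pair $(x^i_t, y^i_t)$ is a two-dimensional Brownian motion, hence its radial part is a two-dimensional Bessel process, which is a.s.\ positive for all $t>0$ and transient (tends to $\infty$). Consequently $\lp r^{\omega}_s\rp^2 = \sum_i \alpha_i^2\lp (x^i_s)^2+(y^i_s)^2\rp$ is a.s.\ strictly positive for all $s>0$, and in fact $\lp r^{\omega}_s\rp^2 \to \infty$ a.s.\ as $s\to\infty$ since even a single summand diverges. In particular there is (a.s.) some random $T$ and $\delta>0$ with $\lp r^{\omega}_s\rp^2 \geq \delta$ for all $s\geq T$, whence $S(t) \geq \tfrac{\delta}{4}(t-T)\to\infty$. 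This gives $S(t)\to\infty$ a.s., completing the argument.

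The only mild subtlety — and the step most deserving of care — is that $W$ and the clock $S$ are not independent in an arbitrary sense but rather, as stated in Lemma~\ref{Lem:AreaProcess.TimeChangeOfBM}, $W$ is genuinely a Brownian motion independent of the process $r^{\omega}$; one should invoke this to ensure that the event $\{W$ hits $a-z_0\}$ has probability one and occurs at a time measurable with respect to $W$ alone, so that composing with the a.s.-surjective-onto-$[0,\infty)$ clock $S$ produces an a.s.-finite $\sigma_a$. I expect no real obstacle here: it is the direct analogue of the corresponding remarks made for $\bH$ around \eqref{Eqn:HeisTC} and for $\widetilde{\operatorname{SL}(2)}$ in the proof of Theorem~\ref{THM:SLUMax}, and the proof is essentially a citation of those arguments with $r^{\omega}_s$ in place of the single radial process.
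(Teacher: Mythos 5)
Your overall strategy is the right one — reduce $\sigma_a<\infty$ to the a.s.\ divergence of the clock $S(t)$ — and that is also exactly what the paper does. However, the key geometric claim you use to prove $S(t)\to\infty$ is incorrect, and the error is not cosmetic.

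You assert that the two-dimensional Bessel process is ``transient (tends to $\infty$)''. It is not. A two-dimensional Bessel process is the radial part of planar Brownian motion, which is neighborhood-recurrent; consequently $\liminf_{t\to\infty} r_t = 0$ a.s.\ (while $\limsup_{t\to\infty} r_t = \infty$). It is true that $r_t>0$ for all $t>0$ a.s.\ — the process never \emph{hits} $0$ after time zero — but that is a distinct statement from transience, and your proof conflates the two. Because of this, the subsequent claims do not follow: $\lp r^{\omega}_s\rp^2$ does not go to infinity ``since even a single summand diverges'' (a single summand does not diverge), and there need not exist random $T$ and $\delta>0$ with $\lp r^{\omega}_s\rp^2\geq\delta$ for all $s\geq T$. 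For $n=1$ (which is not excluded from this lemma, unlike Lemma~\ref{Lem:eqn.DensityUpperBound2}) that last claim is simply false. For $n\geq 2$ the conclusion $r^{\omega}_s\to\infty$ does happen to hold — $r^{\omega}$ is comparable to a $2n$-dimensional Bessel process, which is genuinely transient when $2n\geq 3$ — but that is not the argument you gave, and it would still leave the case $n=1$ unproved.

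The paper avoids this pitfall entirely by using only the weaker fact that $\int_0^t\lp r^i_s\rp^2\,ds\to\infty$ a.s.\ for each individual two-dimensional Bessel process $r^i$, a fact it has already recorded in the proof of Lemma~\ref{Lem:BM}. This divergence does \emph{not} require the integrand to be eventually bounded below; it follows, for instance, from recurrence of the planar Brownian motion, which forces the occupation time of, say, the annulus $\{1<r<2\}$ to be infinite, so $\int_0^\infty \lp r^i_s\rp^2\,ds \geq \int_0^\infty \mathbf{1}_{\{1<r^i_s<2\}}\,ds=\infty$. Summing over $i$ then gives $S(t)\to\infty$ for every $n\geq 1$. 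If you replace your transience argument with this occupation-time argument (or simply cite the corresponding line in the proof of Lemma~\ref{Lem:BM} as the paper does), the rest of your proof goes through as written.
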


\begin{proof}
We have $\int_{0}^t \sum_{i=1}^n\lp \alpha_i r^i_s\rp^2 ds=\sum_{i=1}^n \lp\alpha_i\rp^2 \lp\int_{0}^t \lp r^i_s\rp^2 ds\rp \to \infty$ a.s. as $t\to \infty$, since $\int_0^t r^2_s\, ds\rightarrow \infty$ a.s. as $t\to \infty$ for each $i=1,\cdots,n$. Using the fact that $z_t$ can be written as a time changed Brownian motion as in Lemma~\ref{Lem:AreaProcess.TimeChangeOfBM}, we can see that $z_t$ hits every real value in finite time, and thus $\sigma_a$ is finite a.s.
\end{proof}

\begin{lemma} \label{Lem:HittingTimeRotationInvariance.Nonisotropic}
For any $a\in \bR$ with $a\neq 0$, the hitting time $\sigma_a$ is invariant under the rotation on each copy of $\bR^2$ from the base space $\bR^{2n}\cong \bR^2 \times \ldots \bR^2$. Also, $\sigma_a$ is independent of $\theta^i_{\sigma_a}$.
\end{lemma}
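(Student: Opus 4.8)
The plan is to reduce $\sigma_a$ to the hitting time of an auxiliary one-dimensional Brownian motion by means of the time-change from Lemma~\ref{Lem:AreaProcess.TimeChangeOfBM}, written in cylindrical coordinates. In these coordinates the L\'evy area of the $i$-th planar factor is $x^i_s\,dy^i_s-y^i_s\,dx^i_s=(r^i_s)^2\,d\theta^i_s$, so that
\[
z_t=\tfrac12\sum_{i=1}^n\alpha_i\int_0^t (r^i_s)^2\,d\theta^i_s=W_{S(t)},\qquad S(t):=\int_0^t\tfrac14\big(r^\omega_s\big)^2\,ds,
\]
where $W$ is a standard one-dimensional Brownian motion (the Dambis--Dubins--Schwarz Brownian motion of the martingale $z_t$). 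The two structural facts I would use are that $S$ is a functional of the radial processes $r^1,\dots,r^n$ alone (through $r^\omega_s=(\sum_i\alpha_i^2(r^i_s)^2)^{1/2}$), and that $W$ is independent of $(r^1,\dots,r^n)$. The latter is not spelled out in Lemma~\ref{Lem:AreaProcess.TimeChangeOfBM}, but it follows from the standard fact that the martingale $z_t$ is driven by $W^\omega$, which has vanishing cross-variation with the martingale parts of the $r^i$ (a one-line It\^o computation: $\langle\,\int (x^i\,dy^i-y^i\,dx^i),\ \int(x^i\,dx^i+y^i\,dy^i)\rangle\equiv 0$ on each factor, the factors being mutually independent); equivalently, conditioning on the radial processes turns $z_t$ into a Gaussian time-changed Brownian motion run on the deterministic clock $S$, whose DDS Brownian motion then has a law not depending on the conditioning.

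Next I would record the elementary properties of $S$. Since each $r^i_s$ is a constant multiple of the modulus of a planar Brownian motion, $r^\omega_s>0$ for all $s>0$ a.s., so $S$ is continuous, strictly increasing on $[0,\infty)$, with $S(0)=0$; moreover $S(t)\to\infty$ a.s.\ as $t\to\infty$, exactly as shown in the proof of Lemma~\ref{Lem.HittingTime.FiniteAS}. Hence $S$ is almost surely a homeomorphism of $[0,\infty)$ onto itself with a continuous, strictly increasing inverse $S^{-1}$ that again depends only on the radial processes. Writing $\tau_a:=\inf\{s\geq 0:W_s=a\}$ (a.s.\ finite by recurrence of $W$), the pathwise identity $z_t=W_{S(t)}$ together with the monotonicity and continuity of $S$ gives
\[
\sigma_a=\inf\{t\geq 0:z_t=a\}=\inf\{t\geq 0:W_{S(t)}=a\}=S^{-1}(\tau_a).
\]

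The lemma follows at once: $S^{-1}(\tau_a)$ is a measurable function of the radial processes $(r^i_\cdot)_{i=1}^n$ and of $W$ only, making no reference to the angular processes $(\theta^i_\cdot)_{i=1}^n$; and since $W$ is independent of the radial processes, the conditional law of $\sigma_a$ given $(r^i_\cdot)_i$ is that of $S^{-1}$ evaluated at the hitting time of $a$ by an independent Brownian motion. This is exactly the asserted rotational invariance — one can also see it directly, since a rotation by $\phi_i$ in the $i$-th $\bR^2$-factor sends $\theta^i_\cdot\mapsto\theta^i_\cdot+\phi_i$ and leaves every $r^j_\cdot$ fixed, and a rotation of $\bR^2$ is an orientation-preserving isometry, hence preserves $\int(x^i\,dy^i-y^i\,dx^i)$ pathwise, so leaves each $z^i_t$, and therefore $z_t=\sum_i\alpha_i z^i_t$ and hence $\sigma_a$, unchanged. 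I do not expect a real obstacle here; the only point requiring care is the identity $\inf\{t:W_{S(t)}=a\}=S^{-1}(\tau_a)$, which rests on $S$ being a strictly increasing continuous bijection of $[0,\infty)$ and on $\tau_a<\infty=S(\infty)$ — all available from the cited lemmas — together with the independence of $W$ from the radial processes, which is best quoted (as above) rather than re-derived in full.
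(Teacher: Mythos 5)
Your proposal is correct and rests on exactly the same idea as the paper's (one-line) proof: $z_t$ is, via the time-change representation of Lemma \ref{Lem:AreaProcess.TimeChangeOfBM}, a functional of the radial processes and an independent one-dimensional Brownian motion, hence so is $\sigma_a$. Your write-up simply makes explicit the details the paper leaves implicit (the identity $\sigma_a=S^{-1}(\tau_a)$ and the independence of the Dambis--Dubins--Schwarz Brownian motion from the radial parts), and both of your supporting observations are sound.
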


\begin{proof}
Note that the L\'evy stochastic area process $z_t$ is invariant under the rotation on each copy of $\bR^2$ in the base space $\bR^{2n}\cong \bR^2 \times \ldots \bR^2$. For any $\alpha_i,\beta
_i\in [0,2\pi]$ and any $\gamma,\delta\in [0,2\pi]$, we have
\begin{align*}
\Prob \lp \gamma \leq \sigma_a \leq \delta, \alpha_i\leq \theta^i_{\sigma_a} \leq \beta_i, i=1,\cdots,n\rp= \int_{\gamma}^{\delta} \Prob \lp\left. \alpha_i\leq \theta^i_{\sigma_a} \leq \beta_i, i=1,\cdots,n \right\vert \sigma_a=c\rp dF_{\sigma_a}(c)
\end{align*}
where $F_{\sigma_a}$ is the probability density function of $\sigma_a$. Since $\lp \theta^1_{\sigma_a},\ldots,\theta^n_{\sigma_a}\rp$ is uniformly distributed on $[0,2\pi]\times \ldots \times [0,2\pi]$, we have
\begin{align*}
\Prob \lp \gamma \leq \sigma_a \leq \delta, \alpha_i\leq \theta^i_{\sigma_a} \leq \beta_i, i=1,\cdots,n\rp & =\int_{\gamma}^{\delta} \Pi_{i=1}^n \frac{\beta_i-\alpha_i}{2\pi}dF_{\sigma_a}(c)
\\
&
=\Prob \lp \gamma \leq \sigma_a \leq \delta\rp \cdot \Prob \lp \alpha_i\leq \theta^i_{\sigma_a} \leq \beta_i, i=1,\cdots,n\rp.
\end{align*}
Therefore, the desired result follows.
\end{proof}

Taking advantage of left-invariance, we suppose that the vertical processes $z_t$ and $\widetilde{z}_t$ of two Brownian motions start from $(0,\ldots,0,-a)$ and $(0,\ldots,0,a)$ for $a\neq 0$. To construct a coupling of two such horizontal Brownian motions, we start with the following lemma.  

\begin{Lemma}\label{Lem:BM.Nonisotropic}
Let $B_t=(r^1_t,\theta^1_t,\ldots,r^n_t,\theta^n_t,z_t)$ be an $\mathbb{H}^n_{\omega}$-Brownian motion started from the identity where $\lp r^1_t,\theta^1_t,\ldots,r^n_t,\theta^n_t \rp$ is a Brownian motion on $\bR^{2n}$ started from the origin and $\lp r^i_{t},\theta^i_{t}\rp$ for $i=1,\cdots,n$ are $n$ independent Brownian motions (expressed in the cylindrical coordinates) on $\bR^2$. For any point $(r_i,\theta_i)\in \bR^{2}$ and $a\neq 0$, let $\sigma_a$ be the first time that the vertical process $z_t$ hits $a$ and let $R_{(r_i,\theta_i)}:\bR^2 \rightarrow \bR^2$ be the reflection map as in Lemma~\ref{Lem:BM}. Then the process 
\[
\lp \widetilde{r}^1_{t},\widetilde{\theta}^1_{t},\ldots, \widetilde{r}^n_{t},\widetilde{\theta}^n_{t} \rp = \begin{cases}
\lp R_{\lp r^1_{\sigma_a},\theta^1_{\sigma_a}\rp}\lp r^1_{t},\theta^1_{t}\rp, \ldots, R_{\lp r^n_{\sigma_a},\theta^n_{\sigma_a}\rp}\lp r^n_{t},\theta^n_{t}\rp \rp
 & \text{for $t\leq \sigma_a$} \\
\lp r^1_{t},\theta^1_{t},\ldots, r^n_{t},\theta^n_{t} \rp  & \text{for $t> \sigma_a$}
\end{cases}
\]
is a Brownian motion on $\bR^{2n}$ started from the origin.
\end{Lemma}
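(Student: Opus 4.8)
The plan is to reduce this to Lemma~\ref{Lem:BM} applied on each of the $n$ copies of $\bR^2$, with the only new feature being that a single stopping time $\sigma_a$ is used simultaneously across all factors. First, I would observe that the reflection $R_{(r^i_{\sigma_a},\theta^i_{\sigma_a})}$ acting on the $i$th factor is a function of $\theta^i_{\sigma_a}$ alone (as noted in Lemma~\ref{Lem:BM}), and that by Lemma~\ref{Lem.HittingTime.FiniteAS} the time $\sigma_a$ is a.s.\ finite, so the process $(\widetilde r^1_t,\widetilde\theta^1_t,\ldots,\widetilde r^n_t,\widetilde\theta^n_t)$ is well-defined. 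It is clearly continuous in $t$ (continuity at $t=\sigma_a$ holds because each reflection fixes the point $(r^i_{\sigma_a},\theta^i_{\sigma_a})$), so it suffices to check finite-dimensional distributions, and by independence of the factors it suffices to handle the joint law of the reflected increments across the factors.

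The key point, which makes the argument go through, is Lemma~\ref{Lem:HittingTimeRotationInvariance.Nonisotropic}: $\sigma_a$ depends only on the radial parts $r^1_t,\ldots,r^n_t$ and is independent of the angular parts $\theta^1_t,\ldots,\theta^n_t$. Consequently, conditionally on the radial processes (equivalently, conditionally on the value of $\sigma_a$), the angles $(\theta^1_{\sigma_a},\ldots,\theta^n_{\sigma_a})$ are independent and each uniformly distributed on $[0,2\pi)$ — this is because a planar Brownian motion run up to a stopping time measurable with respect to its own radial filtration, started at the origin, has rotationally symmetric law. So I would condition on the radial processes and then run, on each factor independently, exactly the computation in the proof of Lemma~\ref{Lem:BM}: write the desired finite-dimensional probability as an average over $\theta^i$ of the conditional law, use that conditioning on $\theta^i_{\sigma_a}=\theta$ amounts to conditioning on $\theta^i_{\sigma_a}=0$ and rotating by $\theta$, apply the identity relating reflection and rotation, note that $R_{(1,0)}$ sends the Brownian motion to another Brownian motion $B'^{\,i}_t$ with $\sigma_a$ becoming $\sigma_{-a}$ for the new area processes (here using that reflection reverses each stochastic area and hence the sign of $z_t=\sum\alpha_i z^i_t$), and recognize the resulting average as the finite-dimensional distribution of the unperturbed Brownian motion. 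Since the factors are independent and the same $\sigma_a$ governs all of them, taking the product over $i$ and then the outer expectation over the radial processes gives that the concatenated process $(\widetilde r^i_t,\widetilde\theta^i_t)_{i=1}^n$ has the finite-dimensional distributions of an $\bR^{2n}$-Brownian motion on $[0,\sigma_a]$.

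Finally, as in Lemma~\ref{Lem:BM}, $\widetilde X_t$ is obtained by running this $\bR^{2n}$-Brownian motion until the stopping time $\sigma_{-a}$ (the first time the associated vertical process $\widetilde z_t$ hits $-a$, which is a stopping time for the reflected process) and then concatenating with the original Brownian motion $B_{\sigma_a+s}$, which starts from the correct point because $\sigma_a$ is a fixed point of all the reflections; the strong Markov property then yields that $(\widetilde r^1_t,\widetilde\theta^1_t,\ldots,\widetilde r^n_t,\widetilde\theta^n_t)$ is a Brownian motion on $\bR^{2n}$ from the origin. The main obstacle — and really the only thing that needs care beyond citing Lemma~\ref{Lem:BM} factor-by-factor — is justifying that a \emph{single} stopping time may be used across all $n$ independent factors without destroying the independence structure of the angular variables; this is precisely what Lemma~\ref{Lem:HittingTimeRotationInvariance.Nonisotropic} supplies, since $\sigma_a$ is measurable with respect to the radial filtration, under which the angular increments remain a product of independent rotationally invariant pieces. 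I expect the write-up to be essentially a remark that the proof of Lemma~\ref{Lem:BM} applies verbatim on each factor once this conditioning is set up, rather than a lengthy recomputation.
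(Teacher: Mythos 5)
Your proposal is correct and follows essentially the same route as the paper: the paper likewise reduces to the computation of Lemma~\ref{Lem:BM} factor by factor, using Lemma~\ref{Lem:HittingTimeRotationInvariance.Nonisotropic} to conclude that $\lp\theta^1_{\sigma_a},\ldots,\theta^n_{\sigma_a}\rp$ is uniform on the $n$-torus and that conditioning on these angles commutes with independent rotations of the factors, then identifies the resulting average with the finite-dimensional distributions of the reflected Brownian motion with $\sigma_a=\widetilde\sigma_{-a}$. Your repackaging via conditioning on the radial filtration (valid here because the process starts at the origin, so the angular clock diverges and the conditional angular laws are rotation-invariant) is the same mechanism, and your explicit concatenation step at $\sigma_a$ via the strong Markov property matches the closing argument of Lemma~\ref{Lem:BM}.
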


\begin{proof}
The well-definedness of $\lp \widetilde{r}^1_{t},\widetilde{\theta}^1_{t},\ldots, \widetilde{r}^n_{t},\widetilde{\theta}^n_{t}\rp$ is guaranteed by the a.s. finiteness of $\sigma_a$ from Lemma~\ref{Lem.HittingTime.FiniteAS}. Now we show that $\lp\widetilde{r}^1_{t},\widetilde{\theta}^1_{t},\ldots, \widetilde{r}^n_{t},\widetilde{\theta}^n_{t}\rp$ is a Brownian motion on $\bR^{2n}$. It suffices to show that such a process has the same finite-dimensional distribution as a Brownian motion on $\bR^{2n}$. 

Note that each random reflection $R_{\lp r^i_{\sigma_a},\theta^i_{\sigma_a}\rp}:\bR^2 \rightarrow \bR^2$ only depends on $\theta^i_{\sigma_a}$ by basic planar geometry, even though it is first constructed using the point $\lp r^i_{\sigma_a},\theta^i_{\sigma_a}\rp$. In this way, we have $R_{\lp r^i_{\sigma_a},\theta^i_{\sigma_a}\rp}=R_{\lp 1,\theta^i_{\sigma_a}\rp}$ for each $i=1,\ldots,n$. We denote $R_{\lp r^i_{\sigma_a},\theta^i_{\sigma_a}\rp}\lp r^i_t,\theta^i_t\rp$ by $\hat{X}^i_t$.

Consider any finite collection of times $0< t_1<t_2<\cdots <t_m<\infty$ and any sequence of Borel subsets $A_{ij}$ of $\bR^{2}$ for $i=1,\ldots,n$ and $j=1,\ldots,m$. Since the product $\sigma-$algebra is generated by the collection of measurable rectangles, we only need to look at the event $\lp\lp \hat{X}^1_{t_1},\ldots,\hat{X}^1_{t_m}\rp, \ldots, \lp \hat{X}^n_{t_1},\ldots,\hat{X}^n_{t_m}\rp\rp \in \lp A_{11}\times\cdots\times A_{1m}\rp \times \ldots \times \lp A_{n1}\times\cdots\times A_{nm}\rp$ here. Furthermore, from the a.s. finiteness of $\sigma_a$, a.e.\ path of $\hat{B}^i_t$ is given by a reflected path of $B^i_t$ for $i=1,\ldots,n$. We know that $\sigma_a$  and $\theta_{\sigma}^i$ for all $i=1,\ldots,n$ are independent (Lemma~\ref{Lem:HittingTimeRotationInvariance.Nonisotropic}), and $\lp \theta^1_{\sigma_a},\ldots,\theta^n_{\sigma_a}\rp$ is uniformly distributed on $[0,2\pi]\times \ldots \times [0,2\pi]$. Thus, we can decompose the event that $\lp\lp \hat{X}^1_{t_1},\ldots,\hat{X}^1_{t_m}\rp, \ldots, \lp \hat{X}^n_{t_1},\ldots,\hat{X}^n_{t_m}\rp\rp \in \lp A_{11}\times\cdots\times A_{1m}\rp \times \ldots \times \lp A_{n1}\times\cdots\times A_{nm}\rp$ by conditioning on $\theta^i_{\sigma_a}$ as
\begin{align*}
& \Prob \lp \hat{X}^i_{t_j}\in A_{ij},i=1,\ldots,n,j=1,\ldots,m \rp 
\\
&
=\frac{1}{(2\pi)^n} \int_{0}^{2\pi} \ldots \int_{0}^{2\pi} \Prob \lp \left. R_{(1,\tau_i)} \hat{X}^i_{t_j}\in A_{ij}, i=1,\ldots,n,j=1,\ldots,m \, \right| \, 
\theta^i_{\sigma_a}=\tau_i , i=1,\ldots,n\rp \, d\tau_1\ldots d\tau_n.
\end{align*}

Now we study the above integral. Let $\Rot_{\tau}$ denote rotation in $\bR^2$ around the origin by the angle $\tau$. Here we can proceed as in the proof of Lemma \ref{Lem:BM}. Using basic planar geometry and taking conditional probability, we have
\begin{align*}
& \Prob \lp \left. R_{(1,\tau_i)} \hat{X}^i_{t_j}\in A_{ij}, i=1,\ldots,n,j=1,\ldots,m \, \right| \, 
\theta^i_{\sigma_a}=\tau_i , i=1,\ldots,n\rp 
\\
&
=\Prob \lp \left. \Rot_{\tau_i} \lp R_{(1,0)} \lp r^i_{t_j},\theta^i_{t_j}\rp\rp\in A_{ij},i=1,\ldots,n,j=1,\ldots,m \right| \, 
\theta^i_{\sigma_a}=0,i=1,\ldots,n \rp.
\end{align*}
Using the change of variable, we have
\begin{align*}
& \Prob \lp \hat{X}^i_{t_j}\in A_{ij},i=1,\ldots,n,j=1,\ldots,m \rp
\\
&
=\frac{1}{(2\pi)^n} \int_{0}^{2\pi} \ldots \int_{0}^{2\pi} \Prob \lp \left. \Rot_{\tau_i} \lp R_{(1,0)} \lp r^i_{t_j},\theta^i_{t_j}\rp\rp\in A_{ij},i=1,\ldots,n,j=1,\ldots,m \right| \, 
\theta^i_{\sigma_a}=0,i=1,\ldots,n \rp d\tau_1\ldots d\tau_n.
\end{align*}
We see that $\{R_{(1,0)} \lp r^i_{t},\theta^i_{t}\rp\}_{i=1}^n$ is a family of i.i.d.\ Brownian motions on $\bR^{2}$. For each $i=1,\ldots,n$, we denote $R_{(1,0)} \lp r^i_{t},\theta^i_{t}\rp$ by $\widetilde{X}^i_{t}$ and the hitting time of the corresponding L\'evy area process associated with $\lp \widetilde{X}^1_{t},\cdots,\widetilde{X}^n_{t}\rp$ to the point $a\in \bR$ by $\widetilde{\sigma}_a$. For each $a\in \bR$, we have $\sigma_a=\widetilde{\sigma}_{-a}$ using the fact that reflection on each copy of $\bR^2$ reverses the sign of the L\'evy area process. This gives
\begin{align*}
& \Prob \lp \left. \Rot_{\tau_i} \lp R_{(1,0)} \lp r^i_{t_j},\theta^i_{t_j}\rp\rp\in A_{ij},i=1,\ldots,n,j=1,\ldots,m \right| \, 
\theta^i_{\sigma_a}=0,i=1,\ldots,n \rp 
\\
&
=\Prob \lp \left. \Rot_{\tau_i} \widetilde{X}^i_{t_j} \in A_{ij},i=1,\ldots,n,j=1,\ldots,m \right| \, 
\theta^i_{\widetilde{\sigma}_{-a}}=0,i=1,\ldots,n \rp.
\end{align*}
Note that $\widetilde{\sigma}_{-a}$ is also invariant under rotation on each copy of $\bR^2$, which can be implied by the fact that $\sigma_a=\widetilde{\sigma}_{-a}$ and Lemma \ref{Lem:HittingTimeRotationInvariance.Nonisotropic}. This gives
\begin{align*}
&\Prob \lp \left. \Rot_{\tau_i} \widetilde{X}^i_{t_j} \in A_{ij},i=1,\ldots,n,j=1,\ldots,m \right| \, 
\theta^i_{\widetilde{\sigma}_{-a}}=0,i=1,\ldots,n \rp
\\
&
=\Prob \lp \left.  \widetilde{X}^i_{t_j} \in A_{ij},i=1,\ldots,n,j=1,\ldots,m \right| \, 
\theta^i_{\widetilde{\sigma}_{-a}}=\tau_i,i=1,\ldots,n \rp. 
\end{align*}
Altogether, we obtain
\begin{align*}
& \Prob \lp \hat{X}^i_{t_j}\in A_{ij},i=1,\ldots,n,j=1,\ldots,m \rp 
\\
&
=\frac{1}{(2\pi)^n} \int_{0}^{2\pi} \ldots \int_{0}^{2\pi} \Prob \lp \left.  \widetilde{X}^i_{t_j} \in A_{ij},i=1,\ldots,n,j=1,\ldots,m \right| \, 
\theta^i_{\widetilde{\sigma}_{-a}}=\tau_i,i=1,\ldots,n \rp d\tau_1\ldots d\tau_n.
\end{align*}
This means that $\lp \hat{X}^1_{t},\ldots,\hat{X}^n_{t}\rp$ has the same finite-dimensional distribution as a Brownian motion $\lp \widetilde{X}^1_{t},\ldots,\widetilde{X}^n_{t}\rp$ on $\bR^{2n}$. Therefore, $\lp \hat{X}^1_{t},\ldots,\hat{X}^n_{t}\rp$ is a Brownian motion on $\bR^{2n}$.
\end{proof}

Now we construct a coupling of two Brownian motions $B(t)=(r^1_t,\theta^1_t,\ldots,r^n_t,\theta^n_t,z_t)$ and $\widetilde{B}(t)=(\widetilde{r}^1_{t},\widetilde{\theta}^1_{t},\ldots, \widetilde{r}^n_{t},\widetilde{\theta}^n_{t},\widetilde{z}_t)$ started from two points $(r^1,\theta^1,\ldots,r^n,\theta^n,z)$ and $(\widetilde{r}_1,\widetilde{\theta}_1,\ldots,\widetilde{r}_n,\widetilde{\theta}_n,\widetilde{z})$ in cylindrical coordinates respectively. Moreover, we connect the coupling time with the hitting time $\sigma_a$. This gives a way to estimate the tail bound for the coupling time by studying the hitting time $\sigma_a$.

\begin{THM}\label{THM:HMax.Nonisotropic}
Consider two points in $\bH^n_{\omega}$ on the same vertical fiber with vertical separation $2a>0$; then there exist coordinates such that the points are $(0,\ldots, 0,0)$ and $(0,\ldots, 0,2a)$ (after possibly switching them). Let $B_t=(r^1_t,\theta^1_t,\ldots,r^n_t,\theta^n_t,z_t)$ be an $\bH^n_{\omega}$-Brownian motion started from the origin, and let $\sigma_a$ be the first hitting time of the set $\{z=a\}$ by the process $z_t$, which is a.s.\ finite. Define a stochastic process $\widetilde{B}_t=(\widetilde{r}^1_{t},\widetilde{\theta}^1_{t},\ldots, \widetilde{r}^n_{t},\widetilde{\theta}^n_{t},\widetilde{z}_t)$ on $\bH^n_{\omega}$ by
\begin{equation}\label{Eqn:VerticalCoupling.Nonisotropic}
\lp\widetilde{r}^1_{t},\widetilde{\theta}^1_{t},\ldots, \widetilde{r}^n_{t},\widetilde{\theta}^n_{t},\widetilde{z}_t\rp = \begin{cases}
\lp R_{\lp r^1_{\sigma_a},\theta^1_{\sigma_a}\rp}\lp r^1_{t},\theta^1_{t}\rp, \ldots, R_{\lp r^n_{\sigma_a},\theta^n_{\sigma_a}\rp}\lp r^n_{t},\theta^n_{t}\rp, 2a-z_t\rp  & \text{for $t\leq \sigma_a$} \\
(r^1_t,\theta^1_t,\ldots,r^n_t,\theta^n_t,z_t)  & \text{for $t> \sigma_a$}
\end{cases}
\end{equation}
(where $R_{(r^i_{\sigma_a},\theta^i_{\sigma_a})}:\bR^2 \rightarrow \bR^2$ is the reflection map in $\bR^2$ as in Lemma \ref{Lem:BM}). Then $\tilde{B}_t$ is a $\bH^n_{\omega}$-Brownian motion started from $(0,\ldots, 0,2a)$, coupled with $B_t$ such that their coupling time $T_2$ is equal to $\sigma_a$. Moreover, this coupling is maximal, and we have the following reflection principle
\begin{align} \label{eqn.CouplingTimeTail.Nonisotropic}
\Prob\lp T_2> t\rp=\Prob\lp \sigma_a> t\rp = 1-2 \Prob\lp z_t\geq a \rp .
\end{align}
\end{THM}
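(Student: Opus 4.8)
The plan is to follow the template of Theorem \ref{THM:HMax}, now invoking Lemma \ref{Lem:BM.Nonisotropic} in place of Lemma \ref{Lem:BM} to accommodate the $n$-fold product structure of the base. First I would observe that $\sigma_a$ is a.s.\ finite by Lemma \ref{Lem.HittingTime.FiniteAS}, and that each radial process $r^i_t$ is a.s.\ strictly positive for all $t>0$, so each reflection $R_{(r^i_{\sigma_a},\theta^i_{\sigma_a})}$, and hence the process $\widetilde{X}_t$, is well defined. By Lemma \ref{Lem:BM.Nonisotropic}, the horizontal part $(\widetilde{r}^1_t,\widetilde{\theta}^1_t,\ldots,\widetilde{r}^n_t,\widetilde{\theta}^n_t)$ is then a Brownian motion on $\bR^{2n}$ started from the origin.

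Next I would check the vertical coordinate. Writing $z_t=\sum_{i=1}^n\alpha_i z^i_t$ with $z^i_t = z^i_0+\tfrac12\int_0^t(x^i_s\,dy^i_s - y^i_s\,dx^i_s)$ as in Section 6.2, and choosing base points $\widetilde{z}^i_0$ with $\sum_i\alpha_i\widetilde{z}^i_0 = 2a$ while $\sum_i\alpha_i z^i_0 = 0$, the fact that the reflection in each copy of $\bR^2$ reverses the sign of the corresponding L\'evy area increment shows that the stochastic area of $R_{(r^i_{\sigma_a},\theta^i_{\sigma_a})}(r^i_t,\theta^i_t)$ equals $\widetilde{z}^i_0-(z^i_t-z^i_0)$ for $t\le\sigma_a$; summing against the $\alpha_i$ gives $\widetilde{z}_t = 2a - z_t$ on $\{t\le\sigma_a\}$, which is exactly the vertical component in \eqref{Eqn:VerticalCoupling.Nonisotropic}. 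Hence $\widetilde{X}_t$ is an $\bH^n_{\omega}$-Brownian motion started from $(0,\ldots,0,2a)$. Since $(r^i_{\sigma_a},\theta^i_{\sigma_a})$ is a fixed point of $R_{(r^i_{\sigma_a},\theta^i_{\sigma_a})}$ and $z_{\sigma_a}=a$ forces $\widetilde{z}_{\sigma_a}=2a-a=a$, the two processes agree at $t=\sigma_a$; and for $t<\sigma_a$ we have $z_t<a<\widetilde{z}_t$ (using $a>0$), so their coupling time $T_2$ is exactly $\sigma_a$, after which they coincide by construction.

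For the reflection principle and maximality I would reproduce the argument of Theorem \ref{THM:HMax}. The key point is that $(r^1_t,\ldots,r^n_t,z_t)$ is itself a diffusion, since the angular coordinates decouple from $z_t$ by Lemma \ref{Lem:HittingTimeRotationInvariance.Nonisotropic}; so the strong Markov property together with the time-change representation of Lemma \ref{Lem:AreaProcess.TimeChangeOfBM} shows that, conditionally on $(r^1_{\sigma_a},\ldots,r^n_{\sigma_a})$, the process $z_{\sigma_a+s}$ has the law of $a + W_{\int_0^s\frac14\rho_u^2\,du}$ with $\rho_u$ built from independent restarted planar motions and $W$ an independent one-dimensional Brownian motion. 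This is manifestly symmetric under $W\mapsto -W$, giving symmetry of $z_{\sigma_a+s}-a$ and therefore $\Prob(\sigma_a>t) = 1-2\Prob(z_t\ge a)$. Taking $U=\{z<a\}\subset\bH^n_{\omega}$, and using that $z_t$ (equivalently $X_t$, by H\"ormander's theorem) has a density so that $\Prob(z_t=a)=\Prob(\widetilde{z}_t=a)=0$ for each fixed $t$, the chain of equalities in \eqref{Eqn:Max} applies verbatim and yields maximality. Combining $T_2=\sigma_a$ with the reflection principle gives \eqref{eqn.CouplingTimeTail.Nonisotropic}.

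I expect no serious obstacle here: the one genuinely new ingredient relative to the three-dimensional Heisenberg case was already isolated in Lemma \ref{Lem:BM.Nonisotropic}, and what remains is bookkeeping — checking that the per-fiber reflections assemble into the single vertical reflection $z\mapsto 2a-z$, and noting that $(r^1_t,\ldots,r^n_t,z_t)$ is Markov so that the symmetry argument behind the reflection principle and \eqref{Eqn:Max} carries over unchanged.
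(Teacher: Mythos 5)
Your proposal is correct and follows essentially the same route as the paper's own proof: well-definedness via positivity of the $r^i_t$ and Lemma \ref{Lem:BM.Nonisotropic}, the decomposition $z_t=\sum_i\alpha_i z^i_t$ with per-fiber sign reversal of the L\'evy areas to get $\widetilde{z}_t=2a-z_t$, and the reflection principle and maximality via the strong Markov property of $(r^1_t,\ldots,r^n_t,z_t)$, the time-change representation, and the computation \eqref{Eqn:Max} with $U=\{z<a\}$. No gaps.
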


\begin{proof}
We can proceed in a similar as in the proof of Theorem~\ref{THM:HMax}.

First, we show that $\widetilde{B}_t$ is a well-defined Brownian motion on $\bH^n_{\omega}$. We know that $r^i_t$ is positive for all $t>0$ with probability 1, $r^i_{\sigma_a}>0$ a.s. for all $i=1,\ldots,n$, and thus the reflection in \eqref{Eqn:VerticalCoupling.Nonisotropic} and $\widetilde{B}_t$ are well-defined. It follows from Lemma~\ref{Lem:BM.Nonisotropic} that $\lp R_{\lp r^1_{\sigma_a},\theta^1_{\sigma_a}\rp}\lp r^1_{t},\theta^1_{t}\rp, \ldots, R_{\lp r^n_{\sigma_a},\theta^n_{\sigma_a}\rp}\lp r^n_{t},\theta^n_{t}\rp \rp$ is an $\bR^{2n}$-Brownian motion. Further, using the fact that reflection in $\bR$ reverses the sign of the L\'evy area , we see that each $\tilde{z}^i_t=-z^i_t$ is the L\'evy area process associated with $(\tilde{r}^i_t,\tilde{\theta}^i_t)$ for each $i=1,\ldots,n$. Since $\tilde{z}_t=2a-z_t=2a-\sum_{i=1}^n\alpha_iz^i_t=2a-\sum_{i=1}^n\alpha_i\tilde{z}^i_t$, so that $\widetilde{B}_t$ is a horizontal Brownian motion started from $(0,\ldots,0,2a)$ on $\bH^{n}_{\omega}$. 

Now, from the construction $\widetilde{B}_t$, we see that $B_t$ and $\widetilde{B}_t$ meet at time $\sigma_a$ and not before (note that $z_t<a<\tilde{z}_t$ for $t<\sigma_a$). So $B_t$ and $\widetilde{B}_t$ are coupled horizontal Brownian motions on $\bH^n_{\omega}$ with coupling time $\sigma_a$.

Next, we show that $z_t$ satisfies a reflection principle. Note that if the symmetry of $z_t$ after $\sigma_a$ is given, then the reflection principle follows by the same argument as in the case of one-dimensional Brownian motion on $\bR$. Thus, it suffices to show that the evolution of $z_t$ is symmetric after $\sigma_a$; that is, $z_{\sigma_a+s}-a$ and $-(z_{\sigma_a+s}-a)$ have the same distribution for all $s>0$. Observe that $(r^1_t,\ldots,r^n_t,z_t)$ is diffusion in its own right and satisfies the strong Markov property. Thus, for any $A\subset \mathcal{B}\lp\bR\rp$, we have
\begin{equation}\begin{split}
\Prob\lp z_{\sigma_a+s} \in A \rp &= \E\lb\E\lb \Ind_{\{z_{\sigma_a+s}\in A\}}  | r^i_{\sigma_a},i=1,\ldots,n \rb\rb \\
&=  \E\lb\Prob\lp  \lp a+ \frac{1}{2} W_{\int_0^s \frac{1}{4}\sum_{i=1}^n \lp\alpha_iR^i_u\rp^2\, du}\rp \in A \, \big|\, R^i_0, i=1,\ldots,n \rp\rb
\end{split}\end{equation}
where $W_t$ is a standard one-dimensional Brownian motion started from 0, $R^i_t$ is a two-dimensional Bessel process on the $i$th copy of $\bR^2$ started from $R^i_0$, and this last expectation is understood with respect to the distribution of $R^i_0=r^i_{\sigma_a}$ for $i=1,\ldots,n$. But then it's clear that the probability on this last line is reflection symmetric, in the sense that
\[
\Prob\lp  \lp a+ \frac{1}{2} W_{\int_0^s  \frac{1}{4}\sum_{i=1}^n \lp\alpha_iR^i_u\rp^2\, du}\rp \in A \rp =
\Prob\lp  \lp a+ \frac{1}{2} W_{\int_0^s  \frac{1}{4}\sum_{i=1}^n \lp\alpha_iR^i_u\rp^2\, du}\rp \in a-A \rp .
\]
Then this symmetry is preserved after taking the outer expectation, and therefore the desired symmetry of $z_t$ follows.

Finally, the argument for the maximality of the coupling and \eqref{eqn.CouplingTimeTail.Nonisotropic} is similar as in the proof of Theorem~\ref{THM:HMax}.
\end{proof}

\begin{proposition} \label{Prop.VerticalCouplingTime.TailBound}
The coupling time $T_2$ satisfies
\begin{align*}
\Prob\lp T_2>t\rp=\Prob\lp \sigma_a>t\rp \leq \frac{2a}{\alpha_n t}.
\end{align*}
\end{proposition}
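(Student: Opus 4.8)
The plan is to derive the bound directly from two ingredients already in hand: the reflection principle for the vertical process $z_t$ from Theorem~\ref{THM:HMax.Nonisotropic}, and the pointwise density bound of Lemma~\ref{Lem:eqn.DensityUpperBound2}. Since the coupling constructed in Theorem~\ref{THM:HMax.Nonisotropic} has coupling time $T_2=\sigma_a$ and satisfies $\Prob(T_2>t)=\Prob(\sigma_a>t)=1-2\Prob(z_t\geq a)$, where $z_t$ is the vertical process of an $\bH^n_\omega$-Brownian motion started from the origin, the whole statement reduces to estimating the law of $z_t$.

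First I would note that the density $f_t^\omega$ of $z_t$ is symmetric about the origin. Indeed, as observed in the proof of Lemma~\ref{Lem:eqn.DensityUpperBound2}, $f_t^\omega=f_t^{\alpha_1}*\cdots*f_t^{\alpha_n}$, and each factor $f_t^{\alpha_i}(z)=\frac{1}{\alpha_i t}\sech\!\left(\frac{\pi z}{\alpha_i t}\right)$ is an even function of $z$, so the convolution is even as well. Hence $\int_0^\infty f_t^\omega(z)\,dz=\frac12$, and therefore
\[
\Prob(\sigma_a>t)=1-2\int_a^\infty f_t^\omega(z)\,dz=2\int_0^\infty f_t^\omega(z)\,dz-2\int_a^\infty f_t^\omega(z)\,dz=2\int_0^a f_t^\omega(z)\,dz .
\]

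The final step is simply to insert the bound $f_t^\omega(z)\leq\frac{1}{\alpha_n t}$ from Lemma~\ref{Lem:eqn.DensityUpperBound2}, valid for all $z\in\bR$, which gives
\[
\Prob(T_2>t)=\Prob(\sigma_a>t)=2\int_0^a f_t^\omega(z)\,dz\leq 2a\cdot\frac{1}{\alpha_n t}=\frac{2a}{\alpha_n t},
\]
as claimed. (For $n=1$ the same computation goes through using the explicit density $f_t^{\alpha_1}$ directly, since $\sech\leq 1$ already yields $f_t^{\alpha_1}(z)\leq\frac{1}{\alpha_1 t}=\frac{1}{\alpha_n t}$.) I do not expect any genuine obstacle: the substance is entirely carried by the reflection principle and the density estimate, and the only point needing a line of justification is the evenness of $f_t^\omega$, which is immediate from the convolution structure.
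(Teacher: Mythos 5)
Your proof is correct and follows essentially the same route as the paper: the reflection principle from Theorem~\ref{THM:HMax.Nonisotropic} gives $\Prob(\sigma_a>t)=1-2\Prob(z_t\ge a)$, the symmetry of $f_t^{\omega}$ converts this to $2\int_0^a f_t^{\omega}$, and Lemma~\ref{Lem:eqn.DensityUpperBound2} closes the estimate. Your added remarks (the evenness of the convolution and the $n=1$ case, which Lemma~\ref{Lem:eqn.DensityUpperBound2} formally excludes) are minor but sensible elaborations of points the paper treats implicitly.
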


\begin{proof}
Theorem~\ref{THM:HMax.Nonisotropic} gives that $\Prob\lp T_2>t\rp=\Prob\lp \sigma_a>t\rp= 1-2 \Prob\lp z_t\geq a \rp$. By Lemma~\ref{Lem:eqn.DensityUpperBound2} and the symmetry of $f_t^{\omega}$ around $0$, we have
\[\begin{split}
\Prob\lp \sigma_a>t\rp &= 1- 2\int_a^{\infty} f_t^{\omega}(z) \, dz \\
& = 2\int_0^a f_t^{\omega}(z) \, dz \leq \frac{2a}{\alpha_n t}.
\end{split}\]
\end{proof}

In this way, we have the following vertical gradient estimate in a similar way as Theorem~\ref{THM:HVGrad}.

\begin{THM}
Let $P_t^{\omega}=e^{\frac{t}{2}\Lap_{\sH}^{\omega}}$ be the heat semigroup on $\bH^n_{\omega}$ and consider $f\in L^{\infty}(\bH^n_{\omega})$. Then at any point $(x,y,z)\in\bH^n_{\omega}$ and for ant time $t>0$, we have
\[
\lab Z P_t f  \rab = \lab \nabla_{V} P_t f \rab \leq \frac{1}{\alpha_nt}\|f\|_{\infty} .
\]
\end{THM}

\subsection{Coupling time estimate and gradient estimate}

\begin{THM} \label{THM:Coupling.Nonisotropic}
There exists a non-Markovian coupling $\lp B_t,\widetilde{B}_t\rp$ of two $\mathbb{H}^n_{\omega}$-Brownian motions started from $\lp x_1,y_1,\ldots,x_n,y_n,z\rp$ and $\lp \widetilde{x}_1,\widetilde{y}_1,\ldots,\widetilde{x}_n,\widetilde{y}_n,\widetilde{z}\rp$ respectively with $\widetilde{x}_i=-x_i$ and $\widetilde{y}_i=y_i$ for $i=1,\cdots,n$, and a positive constant $C$ (independent of $\omega$) such that the coupling time $\tau$ satisfies
\begin{align} \label{ineq.CouplingTime.TailBound}
\Prob \lp \tau>t\rp & \leq C \left[\lp \frac{\lp \sum_{i=1}^n \lp \alpha_i\rp^2\rp^{\frac{1}{4}}}{\alpha_n}+1\rp \cdot \frac{\lp \sum_{i=1}^n \lp x_i\rp^2\rp^{\frac{1}{2}}}{\sqrt{t}} + \frac{\vert z-\widetilde{z} +\sum_{i=1}^n \alpha_i x_iy_i\vert}{\alpha_n t}\right] 
\end{align}
for $t\geq  \max \left\{ 4\lp \sum_{i=1}^n \lp \alpha_i\rp^2\rp^{\frac{1}{2}} \lp\sum_{i=1}^n  \lp x_i\rp^2\rp, \sum_{i=1}^n \lp x_i\rp^2, 2\vert z-\widetilde{z} +\sum_{i=1}^n \alpha_i x_iy_i\vert \right\}$. 
\end{THM}

\begin{proof}
Since the L\'evy stochastic area is invariant under rotation of coordinates on each copy of $\bR^2$, it suffices to consider the case when $x_i=\widetilde{x}_i$ for all $i=1,\ldots,n$.

First, we couple their ``horizontal'' part, that is, we couple $(x^1_t,y^1_t,\ldots,x^n_t,y^n_t)$ and $(\widetilde{x}^1_t,\widetilde{y}^1_t,\ldots,\widetilde{x}^n_t,\widetilde{y}^n_t)$ on $\bR^{2n}\cong \bR^2 \times \ldots \times \bR^2$. Here, we take a mirror coupling of $(x^1_t,y^1_t,\ldots,x^n_t,y^n_t)$ and $(\widetilde{x}^1_t,\widetilde{y}^1_t,\ldots,\widetilde{x}^n_t,\widetilde{y}^n_t)$ on $\bR^{2n}$ in the sense of Kendall and Cranston. We also have the tail estimate for the horizontal coupling time $T_1$ given by \eqref{ineq.HorizontalCouplingTime.TailBound}. 

Next, after time $T_1$, we apply the coupling strategy described in Theorem~\ref{THM:HMax.Nonisotropic} to these two diffusions 
\[\lp x^1_t,y^1_t,\ldots,x^n_t,y^n_t,A_{T_1}+\frac{1}{2}\sum_{i=1}^n \alpha_i \int_0^t\lp x^i_sdy^i_s-y^i_sdx^i_s\rp \rp\] 
and 
\[\lp \widetilde{x}^1_t,\widetilde{y}^1_t,\ldots,\widetilde{x}^n_t,\widetilde{y}^n_t, \frac{1}{2}\sum_{i=1}^n \alpha_i \int_0^t\lp\widetilde{x}^i_sd\widetilde{y}^i_s-\widetilde{y}^i_sd\widetilde{x}^i_s\rp \rp.\]
By Lemma~\ref{THM:TailExpectationBound} and Proposition~\ref{Prop.VerticalCouplingTime.TailBound}, we have
\begin{align*}
\Prob\lp \tau-T_1>t\rp & \leq \frac{2}{\alpha_n} \E\left(\frac{\vert A_{T_1}\vert}{t}\wedge 1\right)
\\
&
\leq \frac{C}{\alpha_n} \lp \frac{\lp \sum_{i=1}^n \lp \alpha_i\rp^2\rp^{\frac{1}{4}} \cdot \lp \sum_{i=1}^n \lp x_i\rp^2\rp^{\frac{1}{2}}}{\sqrt{t}} + \frac{\vert z-\widetilde{z} +\sum_{i=1}^n \alpha_i x_iy_i\vert}{t}\rp
\end{align*}
for $t \geq \max \left\{ 4\lp \sum_{i=1}^n \lp \alpha_i\rp^2\rp^{\frac{1}{2}} \lp\sum_{i=1}^n  \lp x_i\rp^2\rp, \sum_{i=1}^n \lp x_i\rp^2, 2\vert z-\widetilde{z} +\sum_{i=1}^n \alpha_i x_iy_i\vert \right\}$. Altogether, we obtain  the desired tail bound on the coupling
time probability stated in the theorem.
\end{proof}

\begin{proposition} \label{prop.CouplingTimeTailBound.Nonisotropic}
Let $\lp x_1,y_1,\ldots,x_n,y_n,z\rp$ and $(\widetilde{x}_1,\widetilde{y}_1,\ldots,\widetilde{x}_n,\widetilde{y}_n,\widetilde{z})$ be two points in $\mathbb{H}^n_{\omega}$ with $\widetilde{x}_i=-x_i$ and $\widetilde{y}_i=y_i$ for $i=1,\cdots,n$ such that $\vert z-\widetilde{z} +\sum_{i=1}^n \alpha_i x_iy_i \vert<1$. Then there exists a positive constant $C$ such that 
\begin{align*}
\Prob \lp \tau>t\rp \leq C \dist_{sR} 
\lp \lp x_1,y_1,\ldots,x_n,y_n,z\rp,\lp \widetilde{x}_1,\widetilde{y}_1,\ldots,\widetilde{x}_n,\widetilde{y}_n,\widetilde{z}\rp \rp
\end{align*}
for $t \geq \max \left\{ 4\lp \sum_{i=1}^n \lp \alpha_i\rp^2\rp^{\frac{1}{2}} \lp\sum_{i=1}^n  \lp x_i\rp^2\rp, \sum_{i=1}^n \lp x_i\rp^2, 2\vert z-\widetilde{z} +\sum_{i=1}^n \alpha_i x_iy_i\vert \right\}$.
\end{proposition}

\begin{proof}
Since $t\geq 1$, we have $\sqrt{t} \leq t$, so Theorem~\ref{ineq.CouplingTime.TailBound} implies
\begin{align*}
\Prob \lp \tau>t\rp & \leq C \left[\lp \frac{\lp \sum_{i=1}^n \lp \alpha_i\rp^2\rp^{\frac{1}{4}}}{\alpha_n}+1\rp \cdot \frac{\lp \sum_{i=1}^n \lp x_i\rp^2\rp^{\frac{1}{2}}}{\sqrt{t}} + \frac{\vert z-\widetilde{z} +\sum_{i=1}^n \alpha_i x_iy_i\vert}{\alpha_n t}\right] 
\\
&
\leq \frac{C}{\sqrt{t}} \lp  \lp \sum_{i=1}^n \lp x_i\rp^2\rp^{\frac{1}{2}}+\left \vert z-\widetilde{z} +\sum_{i=1}^n \alpha_i x_iy_i \right \vert\rp
\\
&
\leq \frac{C}{\sqrt{t}} \lp \sum_{i=1}^n \lp x_i\rp^2+\left \vert z-\widetilde{z} +\sum_{i=1}^n \alpha_i x_iy_i \right \vert\rp^{\frac{1}{2}}
\\
&
\leq \frac{C}{\sqrt{t}} \lp 4\sum_{i=1}^n \lp x_i\rp^2+\left \vert z-\widetilde{z} +\sum_{i=1}^n \alpha_i x_iy_i \right \vert\rp^{\frac{1}{2}}
\\
&
=\frac{C}{\sqrt{t}} \Vert (\widetilde{x}_1,\widetilde{y}_1,\ldots,\widetilde{x}_n,\widetilde{y}_n,\widetilde{z})^{-1} \circ \lp x_1,y_1,\ldots,x_n,y_n,z\rp \Vert_{\omega}.
\end{align*}
The third inequality is obtained by using $x+y \leq \sqrt{2}\lp x^2+y\rp^{\frac{1}{2}}$ for any $x\geq 0$ and $0 \leq y \leq 1$. By Proposition~\ref{prop.HomogeneousNorm} and the equivalence of the homogeneous norms (see \cite[Proposition 5.1.4]{BonfiglioliLanconelliUguzzoniBook}), we have
\begin{align*}
& \Vert (\widetilde{x}_1,\widetilde{y}_1,\ldots,\widetilde{x}_n,\widetilde{y}_n,\widetilde{z})^{-1} \circ \lp x_1,y_1,\ldots,x_n,y_n,z\rp \Vert_{\omega} 
\\
& 
\leq C \dist_{\sR}
\lp (\widetilde{x}_1,\widetilde{y}_1,\ldots,\widetilde{x}_n,\widetilde{y}_n,\widetilde{z})^{-1} \circ \lp x_1,y_1,\ldots,x_n,y_n,z\rp, e\rp
\\
&
=C\dist_{\sR}
\lp \lp x_1,y_1,\ldots,x_n,y_n,z\rp,(\widetilde{x}_1,\widetilde{y}_1,\ldots,\widetilde{x}_n,\widetilde{y}_n,\widetilde{z})\rp,
\end{align*}
so the desired result follows.
\end{proof}

Now we have the following gradient estimate.

\begin{THM}
Let $P_t^{\omega}$ be the heat semigroup generated by $\frac{1}{2}\Delta_{\mathcal{H}}^{\omega}$. For any bounded $f\in C^{\infty}(\mathbb{H}^n_{\omega})$, there exists a positive constant $C$ such that for any $t \geq 1$ we have
\begin{align} \label{ineq.GradientEstimate.Nonisotropic}
\Vert \nabla_{\mathcal{H}} P_t^{\omega}f \Vert_{\infty} \leq \frac{C}{\sqrt{t}}\Vert f\Vert_{\infty}.
\end{align}
Consequently, if $\Delta_{\mathcal{H}}^{\omega}f=0$, then $f$ is a constant.
\end{THM}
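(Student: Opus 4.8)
The plan is to deduce this estimate from the two‑stage coupling of Theorem~\ref{THM:Coupling.Nonisotropic} and the Aldous inequality \eqref{Eqn:Aldous}, in the standard way in which a tail bound on a coupling time produces a gradient bound for the heat semigroup. First I would record two reductions. Since $\partial_t-\frac{1}{2}\Delta_{\mathcal{H}}^{\omega}$ is hypoelliptic, $P_t^{\omega}f$ is smooth for $t>0$, so $\nabla_{\mathcal{H}}P_t^{\omega}f$ is defined pointwise; and because $\Delta_{\mathcal{H}}^{\omega}$ and $\langle\cdot,\cdot\rangle_{\mathcal{H}}^{\omega}$ are left‑invariant, $P_t^{\omega}$ commutes with left translations, whence $|\nabla_{\mathcal{H}}P_t^{\omega}f(g)|=|\nabla_{\mathcal{H}}P_t^{\omega}(f\circ L_g)(e)|$ with $\|f\circ L_g\|_{\infty}=\|f\|_{\infty}$, so it suffices to bound $|\nabla_{\mathcal{H}}P_t^{\omega}f(e)|$. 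Since $\{X_i^{\omega},Y_i^{\omega}\}_{i=1}^{n}$ is an orthonormal frame for $\mathcal{H}^{\omega}$, we have $|\nabla_{\mathcal{H}}P_t^{\omega}f(e)|^2=\sum_{i=1}^{n}\lp (X_i^{\omega}P_t^{\omega}f(e))^2+(Y_i^{\omega}P_t^{\omega}f(e))^2\rp$, so it is enough to bound each of the $2n$ directional derivatives by $C\|f\|_{\infty}/\sqrt{t}$.

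Next I would estimate one such derivative, say $Y_i^{\omega}P_t^{\omega}f(e)=\frac{d}{ds}\big|_{s=0}P_t^{\omega}f\lp\exp(sY_i^{\omega})\rp$. The point $q_s:=\exp(sY_i^{\omega})$ has all coordinates $0$ except the $y_i$‑slot, which equals $s$; in particular $q_s$ and $e$ agree in every $x_j$‑coordinate and the ``vertical'' discrepancy $z-\tilde z+x_i\tilde y_i-\tilde x_iy_i$ vanishes. Theorem~\ref{THM:Coupling.Nonisotropic} then supplies a coupling of the $\mathbb{H}^n_{\omega}$‑Brownian motions from $e$ and from $q_s$ whose coupling time $\tau$ satisfies $\Prob(\tau>t)\leq C|s|/\sqrt{t}$ for $t\geq\lp\sum_{j=1}^{n}\alpha_j\rp s^2$, in particular for all $t\geq1$ once $|s|$ is small. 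Combining with \eqref{Eqn:Aldous}, $|P_t^{\omega}f(q_s)-P_t^{\omega}f(e)|\leq\|f\|_{\infty}\Prob(\tau>t)\leq C\|f\|_{\infty}|s|/\sqrt{t}$; dividing by $|s|$ and letting $s\to0$ gives $|Y_i^{\omega}P_t^{\omega}f(e)|\leq C\|f\|_{\infty}/\sqrt{t}$. For $X_i^{\omega}P_t^{\omega}f(e)$ the horizontal curve $\exp(sX_i^{\omega})$ moves the $x_i$‑coordinate, so I would first apply the quarter‑turn rotation $\rho$ of the $i$‑th $\bR^2$‑factor, which is a sub‑Riemannian automorphism of $\mathbb{H}^n_{\omega}$ fixing $e$ and interchanging (up to sign) $X_i^{\omega}$ and $Y_i^{\omega}$, and transport the coupling above by $\rho^{-1}$ (which does not change the coupling time). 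Summing over $i$ then yields $\|\nabla_{\mathcal{H}}P_t^{\omega}f\|_{\infty}\leq\sqrt{2n}\,C\|f\|_{\infty}/\sqrt{t}$, which is \eqref{ineq.GradientEstimate.Nonisotropic}.

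For the Liouville consequence, let $f$ be bounded with $\Delta_{\mathcal{H}}^{\omega}f=0$. The $\mathbb{H}^n_{\omega}$‑Brownian motion is defined for all time (its explicit description makes this immediate), so by It\^o's formula $f(X_t)$ is a bounded local martingale with vanishing drift, hence a true martingale, whence $P_t^{\omega}f=f$ for all $t>0$. Then $\|\nabla_{\mathcal{H}}f\|_{\infty}=\|\nabla_{\mathcal{H}}P_t^{\omega}f\|_{\infty}\leq\sqrt{2n}\,C\|f\|_{\infty}/\sqrt{t}\to0$ as $t\to\infty$, so $X_i^{\omega}f=Y_i^{\omega}f=0$ for every $i$; using $\alpha_iZ^{\omega}=[X_i^{\omega},Y_i^{\omega}]$ gives $Z^{\omega}f=\alpha_i^{-1}\lp X_i^{\omega}Y_i^{\omega}f-Y_i^{\omega}X_i^{\omega}f\rp=0$ as well, so $df\equiv0$ and $f$ is constant on the connected group $\mathbb{H}^n_{\omega}$.

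The step I expect to require the most care is matching the coordinate normalization under which Theorem~\ref{THM:Coupling.Nonisotropic} is phrased (its tail bound is written assuming $x_i=\tilde x_i$, and the quantities $|y_i-\tilde y_i|$, $|z_i-\tilde z_i+x_i\tilde y_i-\tilde x_iy_i|$ are not rotation invariant) to the horizontal direction along which one differentiates; this is exactly what the rotation $\rho$ resolves. Everything else is a routine application of the Aldous inequality together with the already‑established coupling‑time tail bound, and the passage from the total‑variation estimate to the horizontal gradient is standard.
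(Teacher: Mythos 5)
Your proposal is correct, and its core is the same as the paper's: apply the two-stage coupling of Theorem~\ref{THM:Coupling.Nonisotropic} together with the Aldous inequality to bound $|P_t^{\omega}f(q)-P_t^{\omega}f(\tilde q)|$ by $2\|f\|_{\infty}\Prob(\tau>t)$ and then pass to a derivative. Where you diverge is in the final differentiation step. The paper first converts the tail bound of Theorem~\ref{THM:Coupling.Nonisotropic} into a bound linear in $\dist_{\sR}$ for nearby points (Proposition~\ref{prop.CouplingTimeTailBound.Nonisotropic}, which uses the homogeneous norm $\|\cdot\|_{\omega}$ and the equivalence of homogeneous norms), then divides by $\dist_{\sR}$ and takes a supremum over nearby points, implicitly invoking the identification of $|\nabla_{\mathcal H}u|$ with the pointwise Lipschitz constant of a smooth $u$ relative to the Carnot--Carath\'eodory distance. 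You instead differentiate only along the flows $s\mapsto\exp(sY_i^{\omega})$ of the orthonormal horizontal frame (handling $X_i^{\omega}$ by the quarter-turn automorphism, which is legitimate since the rotation preserves $\omega$ and hence the sub-Riemannian structure, and is in any case already built into the ``without loss of generality $x_i=\tilde x_i$'' reduction in the proof of Theorem~\ref{THM:Coupling.Nonisotropic}), and you correctly observe that for these point pairs only the $|y_i-\tilde y_i|/\sqrt{t}$ term survives and the admissibility condition $t\geq(\sum_j\alpha_j)s^2$ is satisfied for $t\geq1$ and small $s$. This buys you two things: you bypass the homogeneous-norm machinery entirely, and you make the passage from a difference estimate to $\|\nabla_{\mathcal H}P_t^{\omega}f\|$ completely explicit via $|\nabla_{\mathcal H}u|^2=\sum_i((X_i^{\omega}u)^2+(Y_i^{\omega}u)^2)$, at the cost of only a dimensional factor $\sqrt{2n}$. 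Your Liouville argument (martingale property gives $P_t^{\omega}f=f$, the gradient bound kills $X_i^{\omega}f$ and $Y_i^{\omega}f$, and the bracket relation kills $Z^{\omega}f$) replaces the paper's citation of \cite[Proposition 1.5.6]{BonfiglioliLanconelliUguzzoniBook} and is equally valid.
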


\begin{proof}
The idea of the proof of \cite[Corollary 3.10]{BGM} is applicable here. Fix $t \geq 1$. Without loss of generality, we take two points  $\lp x_1,y_1,\ldots,x_n,y_n,z\rp$ and $(\widetilde{x}_1,\widetilde{y}_1,\ldots,\widetilde{x}_n,\widetilde{y}_n,\widetilde{z})$ in $\mathbb{H}^n_{\omega}$ with $\widetilde{x}_i=-x_i$ and $\widetilde{y}_i=y_i$ for $i=1,\cdots,n$ such that they are close enough with respect to the sub-Riemannian distance $\dist_{sR} $ in the following way
\begin{align*}
\left \vert z_0-\widetilde{z}_0 +\sum_{i=1}^n \alpha_i x_iy_i \right \vert<1.
\end{align*}
Take the coupling $\lp B_t,\widetilde{B}_t\rp$ of two $\mathbb{H}^n_{\omega}$-Brownian motions constructed in Theorem~\ref{THM:Coupling.Nonisotropic}. By Proposition~\ref{prop.CouplingTimeTailBound.Nonisotropic}, we have
\begin{align*}
& \vert P_t^{\omega}f\lp x_1,y_1,\ldots,x_n,y_n,z\rp-P_t^{\omega}f(\widetilde{x}^1_t,\widetilde{y}^1_t,\ldots,\widetilde{x}^n_t,\widetilde{y}^n_t) \vert 
\\
&
=\vert \E(f(B_t))-\E(f(\widetilde{B}_t))\vert 
\\
&
=2\Vert f\Vert_{\infty} \Prob (\tau >t)
\\
&
\leq \frac{C}{\sqrt{t}} \Vert f\Vert_{\infty}
\dist_{sR} 
\lp \lp x_1,y_1,\ldots,x_n,y_n,z\rp,(\widetilde{x}_1,\widetilde{y}_1,\ldots,\widetilde{x}_n,\widetilde{y}_n,\widetilde{z})\rp.
\end{align*}
Dividing both sides by $\dist_{sR} 
\lp \lp x_1,y_1,\ldots,x_n,y_n,z\rp,(\widetilde{x}_1,\widetilde{y}_1,\ldots,\widetilde{x}_n,\widetilde{y}_n,\widetilde{z})\rp$ and taking a supremum over all points $(\widetilde{x}^1_t,\widetilde{y}^1_t,\ldots,\widetilde{x}^n_t,\widetilde{y}^n_t)$ which are different from $\lp x_1,y_1,\ldots,x_n,y_n,z\rp$,  we obtain \eqref{ineq.GradientEstimate.Nonisotropic}.

Moreover, if $\Delta_{\mathcal{H}}^{\omega}f=0$, then $P_t^{\omega}f=f$ for all $t\geq 0$. Taking $t \to \infty$ in \eqref{ineq.GradientEstimate.Nonisotropic}, we get $\nabla_{\mathcal{H}}f=0$, and hence $f\in C^{\infty}\lp \mathbb{H}^n_{\omega}\rp$ is a constant by \cite[Proposition 1.5.6]{BonfiglioliLanconelliUguzzoniBook}.
\end{proof}


\providecommand{\bysame}{\leavevmode\hbox to3em{\hrulefill}\thinspace}
\providecommand{\MR}{\relax\ifhmode\unskip\space\fi MR }
\providecommand{\MRhref}[2]{%
  \href{http://www.ams.org/mathscinet-getitem?mr=#1}{#2}
}
\providecommand{\href}[2]{#2}

\end{document}